\documentclass{amsart}

\usepackage{amsmath,amssymb,amsfonts,amsthm}
\usepackage{mathrsfs}
\usepackage{hyperref}
  \hypersetup{colorlinks=true,urlcolor=blue,linkcolor=red,citecolor=red}
\usepackage[capitalise,noabbrev]{cleveref}
\usepackage{enumitem}
  \setlist{nosep}
  \setlist[enumerate]{label={(\roman*)},
  align=left,leftmargin=1.66\parindent,labelwidth=1.66\parindent,labelsep=0pt}
  \setlist[itemize]{label={\raisebox{0.15ex}{\tiny \textbullet}},
  align=left,leftmargin=\parindent,labelindent=0.33\parindent,labelwidth=0.67\parindent,
  labelsep=0pt,listparindent=0pt}
\usepackage{tikz}
  \usetikzlibrary{arrows}
\usepackage{pgfplots}
  \pgfplotsset{compat=1.13}

\theoremstyle{plain}
\newtheorem{theorem}{Theorem}[section]
\newtheorem{corollary}[theorem]{Corollary}
\newtheorem{lemma}[theorem]{Lemma}
\newtheorem{proposition}[theorem]{Proposition}

\theoremstyle{definition}
\newtheorem{assumption}{Assumption}
\newtheorem{definition}[theorem]{Definition}

\theoremstyle{remark}
\newtheorem{remark}[theorem]{Remark}
\newtheorem{example}[theorem]{Example}

\setcounter{assumption}{-1}

\renewcommand{\geq}{\geqslant}
\renewcommand{\leq}{\leqslant}

\newcommand{\R}{\mathbb{R}}

\newcommand{\Z}{\mathbb{Z}}
\newcommand{\Q}{\mathbb{Q}}
\newcommand{\torus}[1]{\mathbb{R}^{#1} / \mathbb{Z}^{#1}}
\newcommand{\Acal}{\mathscr{A}}
\newcommand{\Bcal}{\mathscr{B}}
\newcommand{\NASmin}{\mathfrak{A}}
\newcommand{\NASmax}{\mathfrak{B}}
\newcommand{\X}[2][]{X^{#1}_{#2}}
\newcommand{\Y}[2][]{y^{#1}_{#2}}
\newcommand{\abs}[1]{\left| {\ifx\\#1\\ \cdot \else #1 \fi} \right|}
\newcommand{\norm}[1]{| {\ifx\\#1\\ \cdot \else #1 \fi} |}
\newcommand{\Norm}[1]{\left| {\ifx\\#1\\ \cdot \else #1 \fi} \right|}
\newcommand{\supnorm}[1]{\left\| \ifx\\#1\\ \cdot \else #1 \fi \right\|_\infty}

\newcommand{\Cper}[1]{\mathcal{C}^{0}_\text{per}(#1)}
\newcommand{\dist}[2]{\operatorname{dist}_{#1}(#2)}
\newcommand{\NP}[2]{{N \! P}_{#1}(#2)}
\newcommand{\Proj}[2]{{P}_{#1} #2}
\newcommand{\proj}[2]{{p}_{#1}(#2)}
\newcommand{\varb}{b'}
\newcommand{\argt}{\centerdot} 
\newcommand{\indic}[1]{\mathbf{1}_{#1}}
\newcommand{\clo}[1]{\overline{#1}} 
\newcommand{\scalar}[2]{\langle #1, #2 \rangle}
\newcommand{\bigp}[1]{\big( \, #1 \, \big)}

\DeclareMathOperator{\argmin}{arg\,min}
\DeclareMathOperator{\argmax}{arg\,max}

\title[Unique ergodicity of zero-sum differential games]{Unique ergodicity of \\
deterministic zero-sum differential games}
\author[A.\ Hochart]{Antoine Hochart}
\address{Facultad de Ingenier\'ia y Ciencia, Universidad Adolfo Ib\'a\~nez,
Diagonal Las Torres 2640, Santiago, Chile}
\email{antoine.hochart@gmail.com}
\thanks{The author is supported by FONDECYT grant 3180662.}

\date{January 7, 2020}

\subjclass[2010]{Primary: 91A23, 49N70; Secondary: 37A99, 49L25, 35F21, 35B40.}

\keywords{Differential games, Hamilton-Jacobi equations, viscosity solutions, ergodicity, limit value}

\begin{document}

\maketitle

\begin{abstract}
  We study the ergodicity of deterministic two-person zero-sum differential games.
  This property is defined by the uniform convergence to a constant of either
  the infinite-horizon discounted value as the discount factor
  tends to zero, or equivalently, the averaged finite-horizon value
  as the time goes to infinity.
  We provide necessary and sufficient conditions for the unique ergodicity of a game.
  This notion extends the classical one for dynamical systems, namely when
  ergodicity holds with any (suitable) perturbation of the running payoff function.
  Our main condition is symmetric between the two players and involve dominions, i.e.,
  subsets of states that one player can make approximately invariant.
\end{abstract}

\section{Introduction}

We study the ergodic problem for deterministic two-player zero-sum differential games.
Such games are defined by a nonlinear system in $\R^n$ controlled by two players,
\begin{equation}
  \label{eq:controlled-system}
  \begin{cases}
    \dot{\X{t}} = f(\X{t},a_t,b_t) , \quad t > 0 , \\
    \X{0} = x ,
  \end{cases}
\end{equation}
where the first player chooses the actions $a_t \in A$ and the second player,
the actions $b_t \in B$.
Given a continuous and bounded payoff function $\ell$, player~1 intends to minimize
one of the following payoff functionals, whereas player~2 intends to maximize it:
\begin{equation}
  \label{eq:discounted-functional}
  J_\delta(x,a,b) = \int_0^{\infty} e^{-\delta s} \ell(\X{s}, a_s, b_s) ds
\end{equation}
in the infinite-horizon discounted game, or
\begin{equation}
  \label{eq:finite-horizon-functional}
  J(t,x,a,b) = \int_0^t \ell(\X{s}, a_s, b_s) ds
\end{equation}
in the game played in finite horizon $t$.
We assume that the data are $\Z^n$-periodic in the state variable $x \in \R^n$ so that
the state space can be identify with the $n$-torus $\torus{n}$.
We also restrict our study to the lower game, in which player~1 adapts her control to
player~2's actions, but note that all the results can be readily adapted to the upper game or
the situation in which the classical Isaacs condition holds.

The value of the discounted (lower) game and the one of the finite-horizon (lower) game,
denoted respectively by $v_\delta(x)$ and $v(t,x)$, are the payoffs at equilibrium and can be
characterized as the viscosity solutions of, respectively, a stationary Hamilton-Jacobi PDE
and an evolutionary Hamilton-Jacobi PDE involving the Hamiltonian of the (lower) game.

\medskip
The ergodic problem for zero-sum differential games or for Hamilton-Jacobi equations,
its PDE counterpart, concerns the asymptotic behavior of the value functions
$v_\delta(x)$ and $v(t,x)$.
More precisely, it deals with the uniform convergence toward a constant of
$\delta v_\delta(x)$ when the discount factor $\delta$ goes to zero, and of
$v(t,x)/t$ when the horizon $t$ goes to infinity.
The problem has been much studied since the seminal work of Lions, Papanicolaou and
Varadhan \cite{LPV87}.
For optimal control (i.e., one-player) problems, let us mention the work of
Arisawa \cite{Ari97,Ari98} and for two-player games, the one of Alvarez and Bardi \cite{AB03,AB07}
or Cardaliaguet \cite{Car10}. 
More recently, the ergodic control problem has been studied by Quincampoix and Renault \cite{QR11},
Gaitsgory and Quincampoix \cite{GQ13}, Cannarsa and Quincampoix \cite{CQ15} or
Buckdahn, Quincampoix and Renault \cite{BQR15}, for situations in which the limit value is not
necessarily constant with respect to the initial state.
Let us further mention the work of Khlopin \cite{Khl17} on Abelian-Tauberian properties, or
Ziliotto \cite{Zil17,Zil19} on counterexamples to the convergence of the values, which illustrate
the connection between the discrete setting (i.e., repeated games) and the continuous setting
(which we consider here)\footnote{Note that the counterexample to Hamilton-Jacobi
homogenization given in \cite{Zil17} has been preceded by counterexamples for the convergence of
the value of repeated games given by Vigeral \cite{Vig13} and Ziliotto \cite{Zil16}.}.


An important problem is then to characterize the differential games which are ergodic.
Typical results require that the nonlinear system (or a subsystem, if it is decomposable)
be uniformly controllable by one player, that is, any point $x$ is controllable to any
other point $y$ by this player, either exactly or approximately, asymptotically or
in bounded time (see e.g., \cite{Ari98,Bet05,AB07}).
Such conditions are independent of the payoff function $\ell$ and thus imply that
the game is in fact uniquely ergodic.
The latter notion, which was originally defined for dynamical systems, readily extends to
differential games: a game is uniquely ergodic if it is ergodic for all perturbations of
the payoff function $\ell$ that only depend on the state variable.
In \cite{Ari97}, Arisawa showed that a converse property holds for systems controlled by
one player and proved the existence of an ergodic attractor when unique ergodicity holds.
But for two-player games, these controllability conditions totally lack symmetry,
focusing only on one player.

\medskip
The purpose of this article is to study the unique ergodicity property
for differential games.
We introduce a ``dominion condition'' which is in essence symmetrical
between the two players.
Each dominion is associated with one player and, roughly speaking, corresponds to a nonempty
subset of states that this player can make approximately invariant for the dynamics.
We show that if a game is uniquely ergodic, then the players do not have disjoint dominions.
To prove this result, we use an Hamilton-Jacobi PDE approach.
Under specific controllability assumptions (independence of $f$ with respect to
the state variable or uniform time estimates on the dynamics) we further prove that
the ``dominion condition'' is in fact equivalent to unique ergodicity.
Thus our results generalize the unique ergodicity property of dynamical systems\footnote{We refer
the reader to \cite[Sec.~6.1]{AB03} for the connections between classical ergodic theory and
ergodicity of games or Hamiltonians.}, as well as the analysis of Arisawa in \cite{Ari97,Ari98}
for optimal control problems.
In particular, let us observe that if a system is uniformly controllable by one player, then,
whatever assumptions are made on the controllability (asymptotic or bounded time,
exact or approximate), it implies that the other player has a unique trivial dominion,
namely the whole state space, and so that the ``dominion condition'' trivially holds.

We finally mention that the notion of dominion coincides with the ones of leadership domain and
discriminating domain in viability theory (see e.g., \cite{Car96}), and therefore
also relates with the notions of \textbf{B}-set and approachability in repeated games
with vector payoffs, as shown by As Soulaimani, Quincampoix and Sorin in \cite{ASQS09}. 
However, the ideas developed in this article were first inspired by the study of the ergodic
problem for zero-sum repeated games, i.e., games played in discrete time (see the companion
articles \cite{Hoc19} and \cite{AGH19}).
In order to remain consistent with the latter work, we have chosen to use the term ``dominion''
instead of ``domain'', although the two terms could be interchanged.

\medskip
The paper is organized as follows.
\Cref{sec:preliminaries} is dedicated to preliminaries on differential games, their value
functions and the Hamilton-Jacobi PDE approach to ergodicity.
This section only provides some notation and classical results.
It can therefore be safely skipped by readers familiar with the subject.
In \Cref{sec:ergodicity-Hamiltonians}, we introduce and study the unique ergodicity
property for general Hamiltonians, that is, the property of an Hamiltonian to be ergodic
for any suitable perturbation.
This (slightly) generalizes a characterization by Alvarez and Bardi in \cite{AB10}.
In \Cref{sec:PDE-approach}, we introduce the notion of dominion and study the unique
ergodicity property for differential games following a PDE approach.
In \Cref{sec:controllability-approach}, we study the unique ergodicity of
differential games relying only on a dynamical system approach.
Finally in \Cref{sec:dominion-characterization}, we characterize dominions
in operator-theoretic terms, which establishes the link with the notion of
discriminating / leadership domain in viability theory.

\section{Preliminaries}
\label{sec:preliminaries}

We introduce here some notation as well as standard definitions and results on differential games
and their PDE approach.
Readers familiar with the subject can safely skip the section.

\subsection{Framework and standing assumptions}
\label{sec:setting}

We start by describing the setting of deterministic two-player zero-sum
differential games that we study in this article.
Consider first the controlled nonlinear system~\labelcref{eq:controlled-system}
where the map $f$ is from $\R^n \times A \times B$ to $\R^n$, with $A, B$ nonempty
compact metric spaces.
We assume throughout the paper that $f$ is continuous in all variables and Lipschitz continuous
in the state variable, uniformly in the control variables, i.e., denoting by $\Norm{}$
the standard Euclidean norm,
\[
  \Norm{f(x,a,b)-f(y,a,b)} \leq L_f \Norm{x-y}
\]
for some constant $L_f \geq 0$ and for all $x, y \in \R^n$, $a \in A$ and $b \in B$.
Player~1 (resp., player~2) chooses a control $t \mapsto a_t$ (resp., $t \mapsto b_t$) in
the set of Lebesgue measurable functions from $[0,+\infty)$ to $A$ (resp., $B$), which
we denote by $\Acal$ (resp., $\Bcal$)\footnote{In order to simplify the notation,
we shall equally denote by $a$ and $b$ single elements of $A$ and $B$, respectively,
and controls of player~1 and player~2, i.e., elements of $\Acal$ and $\Bcal$, respectively.
The distinction should be clear from the context}.
The Cauchy-Lipschitz theorem implies that
\Cref{eq:controlled-system} has a unique solution, which we denote by $\X[x,a,b]{t}$ and
for which the differential equation holds for almost all $t > 0$.

We are further given a bounded continuous payoff function
$\ell: \R^n \times A \times B \to \R$, where we let $M_\ell = \supnorm{\ell}$,
the supremum norm of $\ell$.
Then, for any trajectory of the controlled system \labelcref{eq:controlled-system},
we mainly consider in this article the discounted payoff
functional~\labelcref{eq:discounted-functional}, associated with the game played in infinite
horizon with a discount factor $\delta > 0$ on the running payoff.
The objective of player~1 is to minimize the latter functional, whereas player~2 intends
to maximize it.
We shall also briefly mention the payoff functional~\labelcref{eq:finite-horizon-functional}
associated with the game played in a finite horizon $t > 0$.

Additionally to the classical conditions on $f$ and $\ell$ already mentioned above (and which
we reproduce below), we make throughout the paper the following assumption.
Before stating it, let us recall that a modulus of continuity is a nondecreasing function
$\omega : [0,+\infty) \to [0,+\infty)$, vanishing and continuous at $0$, that is,
such that $\lim_{r \to 0} \omega(r) = \omega(0) = 0$.

\begin{assumption}[Standing assumption]
  \label{asm:standing-assumption}
  \leavevmode
  \begin{enumerate}
    \item The function $f$ is continuous in all variables and uniformly Lipschitz
      continuous in the state variable, the function $\ell$ is bounded continuous,
      the action spaces $A$ and $B$ are nonempty compact sets.
    \item \label{itm:std-modulus}
      The payoff function $\ell$ is uniformly continuous with respect to the state variable,
      uniformly with respect to the control variables, i.e., there exists
      a modulus of continuity $\omega_\ell$ such that
      \[
        \abs{\ell(x,a,b)-\ell(y,a,b)} \leq \omega_\ell(\Norm{x-y}) ,
        \quad \forall x, y \in \R^n , \enspace \forall a \in A ,
        \enspace \forall b \in B .
      \]
    \item \label{itm:std-periodicity}
      The functions $f$ and $\ell$ are $\Z^n$-periodic in the state variable,
      i.e., for $\varphi \in \{f, \ell \}$,
      \[
        \varphi(x+k,a,b) = \varphi(x,a,b) ,
        \quad \forall k \in \Z^n , \enspace \forall x \in \R^n ,
        \enspace \forall a \in A , \enspace \forall b \in B .
      \]
  \end{enumerate}
\end{assumption}

Let us remark that \Cref{itm:std-periodicity} implies that the state space can be identify
with the $n$-torus $\torus{n}$.
Although we shall work mostly in $\R^n$, we draw the attention of the reader to the fact
that sometimes, we will consider objects in the quotient space.
Moreover, \Cref{itm:std-periodicity} together with the continuity of $f$ entails
the boundedness of this function.
We therefore let $M_f = \supnorm{f}$.

\subsection{Value functions and Hamilton-Jacobi PDEs}

We introduce here the concept of value function and then characterize it in terms of
viscosity solution of some Hamilton-Jacobi PDE.
We keep the presentation to a minimum and refer the reader to the classical monograph
\cite{BCD97} for more details.

Let us start with the definition of nonanticipating strategies.

\begin{definition}[Nonanticipating strategy]
  A nonanticipating strategy for the first player is a map $\alpha: \Bcal \to \Acal$
  such that for any time $t > 0$ and any controls $b^1, b^2 \in \Bcal$ of player~2,
  if $b^1_s = b^2_s$ for almost all $s \leq t$ then
  $\alpha[b^1]_s = \alpha[b^2]_s$ for almost all $s \leq t$.
  We denote by $\NASmin$ the set of nonanticipating strategies for player~1.

  The set $\NASmax$ of nonanticipating strategies $\beta: \Acal \to \Bcal$
  for the second player is defined accordingly.
\end{definition}

We then introduce the (unnormalized) value functions.
When player~2 chooses a control $b \in \Bcal$ and player~1 is allowed to adapt her response
to this control, i.e., when she chooses a nonanticipating strategy $\alpha \in \NASmin$,
we are considering the {\em lower game}, which we denote by $\Gamma^-$.
The lower value function associated with the infinite-horizon discounted payoff
functional is then defined by
\[
  v^-_\delta(x) = \inf_{\alpha \in \NASmin} \sup_{b \in \Bcal} J_\delta(x,\alpha[b],b) .
\]
On the other hand, if player~1 is bound to choose a control $a \in \Acal$ to which
player~2 can adapt by choosing a nonanticipating strategy $\beta \in \NASmax$,
then we are considering the {\em upper game}, denoted by $\Gamma^+$, and the upper value
function is given by
\[
  v^+_\delta(x) = \sup_{\beta \in \NASmax} \inf_{a \in \Acal} J_\delta(x,a,\beta[a]) .
\]
When the game is played in a finite horizon $t > 0$, the value functions are defined
similarly by, respectively,
\[
  v^-(t,x) = \inf_{\alpha \in \NASmin} \sup_{b \in \Bcal} J(t,x,\alpha[b],b)
  \quad \text{and} \quad
  v^+(t,x) = \sup_{\beta \in \NASmax} \inf_{a \in \Acal} J(t,x,a,\beta[a]) .
\]

We always have $v^-_\delta(x) \leq v^+_\delta(x)$ (resp., $v^-(t,x) \leq v^+(t,x)$) and
the differential game is said to have a value at state $x$ if there is equality.
The latter holds under the classical Isaacs condition (which we recall
at the end of the section).
However, in this work, we do not need to make such an assumption:
all the results presented in the article hold in the lower as well as in the upper game.
Owing to the symmetry of $\Gamma^-$ and $\Gamma^+$, we shall only consider from now on
the lower game, and therefore drop the ``-'' superscript for simplicity of the notation.
We leave to the reader the straightforward adaptation of the results to the upper game
(or to the situation in which Isaacs' condition holds).

\medskip
We readily deduce from the above definitions that the two normalized value functions
$x \mapsto \delta v_\delta(x)$ and $(t,x) \mapsto v(t,x) / t$ are bounded by $M_\ell$
and $\Z^n$-periodic.
It is also known that they are respectively continuous on $\R^n$ and
Lipschitz continuous on $[0,T] \times \R^n$ for all times $T > 0$.  
Furthermore, they can be characterized as viscosity solutions\footnote{In this paper,
the solutions of PDEs will always be in the continuous viscosity sense.} of some PDEs,
called Hamilton-Jacobi-Isaacs' equations.
These equations involve the (lower) {\em Hamiltonian}, defined by
\begin{equation}
  \label{eq:Hamiltonian}
  H(x,p) = H^{-}(x,p) = \min_{b \in B} \max_{a \in A}
  \big\{ -\scalar{f(x,a,b)}{p} - \ell(x,a,b) \big\} ,
  \quad x , p \in \R^n ,
\end{equation}
where $\scalar{\cdot}{\cdot}$ is the standard scalar product on $\R^n$.
The next result illustrates this fact.
Note that, given any real function $(t,x) \mapsto \varphi(t,x)$, we denote by
$\partial_t \varphi$ its partial derivative with respect to the time variable $t$,
and by $D \varphi$ its gradient with respect to the state variable $x$.

\begin{theorem}[see {\cite[Ch.~III, Prop.~2.8,~3.5]{BCD97}}]
  \label{thm:HJ-PDE-characterization}
  Under \Cref{asm:standing-assumption}, the value function $v_\delta$
  is the unique continuous viscosity solution of the Hamilton-Jacobi PDE
  \begin{equation}
    \label{eq:HJ-PDE-stationary}
    \tag{HJ$_\delta$}
    \begin{cases}
      \delta u(x) + H(x, Du(x)) = 0 , \quad \text{in} \enspace \R^n , \\
      u \enspace \Z^n \text{-periodic} ,
    \end{cases}
  \end{equation}
  and the value function $(t,x) \mapsto v(t,x)$ is the unique continuous viscosity solution of
  the Hamilton-Jacobi PDE
  \begin{equation}
    \label{eq:HJ-PDE-evolution}
    \tag{HJ$_\text{t}$}
    \begin{cases}
      \partial_t u(t,x) + H(x, Du(t,x)) = 0 , & \text{in} \enspace (0,+\infty) \times \R^n , \\
      u(0,x) = 0 , & \text{for all} \enspace x \in \R^n , \\
      u(t,\cdot) \enspace \Z^n \text{-periodic} , & \text{for all} \enspace t > 0 .
    \end{cases}
  \end{equation}
\end{theorem}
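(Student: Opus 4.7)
The plan is to follow the classical program in Bardi–Capuzzo-Dolcetta: establish a dynamic programming principle for the lower value, derive the PDE from it in the viscosity sense, and then invoke a comparison principle to get uniqueness. Since the theorem is stated as a reference-based fact, I aim for a sketch that isolates the three standard ingredients.

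First I would prove the dynamic programming principle. For the discounted game, the target identity is
\[
  v_\delta(x) = \inf_{\alpha \in \NASmin} \sup_{b \in \Bcal} \Bigl\{ \int_0^h e^{-\delta s} \ell(\X[x,\alpha[b],b]{s}, \alpha[b]_s, b_s)\, ds + e^{-\delta h} v_\delta(\X[x,\alpha[b],b]{h}) \Bigr\}
\]
for every $h > 0$, and an analogous identity with the endpoint $v(t-h, \argt)$ for the finite-horizon game. The proof is the standard concatenation argument for nonanticipating strategies: one direction uses that strategies from time $h$ onward can be glued to an $\varepsilon$-optimal strategy on $[0,h]$ (this requires measurable selection / approximation, which is routine for Lebesgue measurable controls on compact metric action sets), and the other direction uses that the restriction of a strategy to $[0,h]$ paired with an $\varepsilon$-optimal continuation still attains the value. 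This step is where most of the work lies and is the main obstacle, because the nonanticipating structure must be respected under the gluing.

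Next I would derive the PDE in the viscosity sense. Standard regularity gives that $v_\delta$ is bounded and continuous (Lipschitz for the evolution case), and the $\Z^n$-periodicity follows directly from the periodicity of $f$ and $\ell$ in \Cref{asm:standing-assumption}\labelcref{itm:std-periodicity}. Testing the DPP against a smooth $\varphi$ touching $v_\delta$ from below at $x_0$, expanding $e^{-\delta h}$ and $\varphi(\X[x_0,\alpha[b],b]{h})$ to first order in $h$, then dividing by $h$ and sending $h \to 0^+$, yields
\[
  \delta \varphi(x_0) + \min_{b \in B} \max_{a \in A} \bigl\{ -\scalar{f(x_0,a,b)}{D\varphi(x_0)} - \ell(x_0,a,b) \bigr\} \leq 0 ,
\]
that is, the subsolution inequality; the supersolution inequality is symmetric. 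The exchange of min–max with the limit relies on the continuity of $f, \ell$ and the compactness of $A, B$. The initial condition for the evolution PDE follows from $v(0,x) = 0$.

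Finally I would invoke the comparison principle for bounded uniformly continuous sub- and supersolutions of \labelcref{eq:HJ-PDE-stationary} and \labelcref{eq:HJ-PDE-evolution} on the torus. The Hamiltonian $H$ defined in \labelcref{eq:Hamiltonian} satisfies the standard structural assumptions: it is continuous, convex-like in $p$ through the min-max, Lipschitz in $p$ uniformly in $x$ (with constant $M_f$), and has modulus-of-continuity behaviour in $x$ of the form $\abs{H(x,p) - H(y,p)} \leq L_f \Norm{x-y}\Norm{p} + \omega_\ell(\Norm{x-y})$, inherited from the Lipschitz bound on $f$ and from \Cref{asm:standing-assumption}\labelcref{itm:std-modulus}. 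This is exactly what the doubling-of-variables argument in \cite[Ch.~II–III]{BCD97} requires to yield uniqueness for \labelcref{eq:HJ-PDE-stationary} (where the $\delta u$ term provides strict monotonicity) and for \labelcref{eq:HJ-PDE-evolution} (with a $e^{-\lambda t}$ transformation if needed). Combining existence via the value function with this uniqueness gives the claimed characterization.
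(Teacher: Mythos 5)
The paper offers no proof of this theorem: it is imported verbatim from \cite{BCD97}, and your sketch (dynamic programming principle via concatenation of nonanticipating strategies, derivation of the viscosity inequalities from the DPP, and comparison under the structure condition $\abs{H(x,p)-H(y,p)} \leq L_f \Norm{x-y}\Norm{p} + \omega_\ell(\Norm{x-y})$) is exactly the standard argument given there, so it is correct in approach and identifies the right obstacles. One small slip: in the usual convention a test function touching $v_\delta$ \emph{from above} (local maximum of $v_\delta-\varphi$) yields the subsolution inequality $\delta\varphi(x_0)+H(x_0,D\varphi(x_0))\leq 0$, while touching from below yields the supersolution inequality — you have the two directions swapped, though this does not affect the substance of the argument.
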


The upper value functions are characterized by the same PDEs after replacing the lower
Hamiltonian $H$ with the upper Hamiltonian
\[
  H^{+}(x,p) =  \max_{a \in A} \min_{b \in B} \big\{ -\scalar{f(x,a,b)}{p} - \ell(x,a,b) \big\} ,
  \quad x , p \in \R^n .
\]
Consequently, if {\em Isaacs' condition} holds, that is, if
\begin{multline}
  \label{eq:Isaacs-condition}
  \max_{a \in A} \min_{b \in B} \big\{ -\scalar{f(x,a,b)}{p} - \ell(x,a,b) \big\} \\
  = \min_{b \in B} \max_{a \in A} \big\{ -\scalar{f(x,a,b)}{p} - \ell(x,a,b) \big\},
  \quad \forall x , p \in \R^n ,
\end{multline}
then the lower and the upper value functions are equal.

\subsection{Ergodicity and PDE approach}

In this article, we are interested in the asymptotic behavior of the value functions,
that is, in the behavior of $v_\delta(x)$ as the discount factor $\delta$ goes to $0$
(resp., in the behavior of $v(t,x)$ as the time horizon $t$ goes to $+\infty$).
More specifically, we study the so-called {\em ergodic problem}, that is, the situation
in which there exists a constant $\lambda \in \R$ such that the normalized value
$\delta v_\delta(x)$ tends to $\lambda$ as $\delta$ goes to $0$ (resp., $v(t,x) / t$
tends to $\lambda$ as $t$ goes to $+\infty$) uniformly in $x$.
This property is called {\em ergodicity} of the game. 

Thanks to \Cref{thm:HJ-PDE-characterization}, the latter problem can be studied by a PDE approach.
With this in mind, we shall sometimes consider arbitrary Hamiltonians $(x,p) \mapsto H(x,p)$
defined on $\R^n \times \R^n$ that satisfy the following properties.
Note that these properties are inherited from the Hamiltonian defined
in \labelcref{eq:Hamiltonian}.

\begin{assumption}
  \label{asm:Hamiltonian}
  \leavevmode
  \begin{enumerate}
    \item \label{itm:Hamiltonian-continuity}
      The Hamiltonian $H : \R^n \times \R^n \to \R$ is continuous.
    \item \label{itm:Hamiltonian-periodicity}
      $H$ is $\Z^n$-periodic in the first variable, i.e,
      for all $x, p \in \R^n$ and $k \in \Z^n$,
      \[
        H(x+k,p) = H(x,p) .
      \]
    \item \label{itm:Hamiltonian-modulus}
      There is a modulus of continuity $\omega : [0,+\infty) \to [0,+\infty)$ such that,
        for all $x, y, p \in \R^n$,
      \[
        \abs{H(x,p) - H(y,p)} \leq \omega \big( \Norm{x-y} (1 + \Norm{p}) \big) .
      \]
    \item \label{itm:Hamiltonian-recession}
      There is a function $H_\infty: \R^n \times \R^n \to \R$ that is
      positively homogeneous of degree one in the second variable, and
      a constant $M_H \geq 0$ such that, for all $x, p \in \R^n$,
      \[
        \abs{H(x,p) - H_\infty(x,p)} \leq M_H .
      \]
  \end{enumerate}
\end{assumption}

Let us make few comments about these assumptions.
First, \Cref{itm:Hamiltonian-continuity,itm:Hamiltonian-periodicity,itm:Hamiltonian-modulus}
imply that the PDEs \labelcref{eq:HJ-PDE-stationary,eq:HJ-PDE-evolution} have a unique
continuous viscosity solution.
In particular, \Cref{itm:Hamiltonian-modulus} implies that
the comparison principle for viscosity solutions holds.
Second, the map $H_\infty$ introduced in \Cref{itm:Hamiltonian-recession} is called
the {\em recession function} of $H$.
The positive homogeneity of degree one means that
\[
  H_\infty (x, \nu p) = \nu H_\infty (x,p)
\]
for all $x, p \in \R^n$ and all $\nu > 0$.
A consequence is that
\[
  \lim_{\nu \to +\infty} \frac{H(x, \nu p)}{\nu} = H_\infty(x,p)
\]
uniformly in $(x,p)$, and so $H_\infty$ is necessarily unique,
continuous and $\Z^n$-periodic in the first variable.
Let us observe that if $H$ is the Hamiltonian associated with the lower game $\Gamma^-$,
as defined in \labelcref{eq:Hamiltonian}, then
\begin{equation}
  \label{eq:homogeneous-Hamiltonian}
  H_\infty(x,p) = \min_{b \in B} \max_{a \in A} \big\{ -\scalar{f(x,a,b)}{p} \big\} ,
  \quad x , p \in \R^n .
\end{equation}

Following a PDE approach, the existence and the value of the ergodic constant $\lambda$
can be related with the viscosity solutions of the following cell problem:
\begin{equation}
  \label{eq:cell-problem}
  \tag{CP}
  \begin{cases}
    c + H(x, Dw(x)) = 0 , \quad \text{in} \enspace \R^n , \\
    w \enspace \Z^n\text{-periodic} .
  \end{cases}
\end{equation}
The next result explains this connection.
In its statement, we abbreviate upper semicontinuous as u.s.c.\ and
lower semicontinuous as l.s.c.
Note that the result was shown in \cite{AB03} for second-order Hamilton-Jacobi PDEs.

\begin{theorem}[{\cite[Thm.~4]{AB03}}]
  \label{thm:ergodicity}
  Let $H$ be an arbitrary Hamiltonian satisfying
  \Cref{itm:Hamiltonian-continuity,itm:Hamiltonian-periodicity,itm:Hamiltonian-modulus}
  of \Cref{asm:Hamiltonian}.
  The following assertions are equivalent.
  \begin{enumerate}
    \item \label{itm:abstract-ergodicity-i}
      If $u_\delta$ is the solution of the stationary problem \labelcref{eq:HJ-PDE-stationary},
      then $\delta u_\delta(x)$ converges uniformly in $x$ to a constant $\lambda_1 \in \R$
      as $\delta$ goes to $0$.
    \item \label{itm:abstract-ergodicity-ii}
      If $u$ is the solution of the Cauchy problem \labelcref{eq:HJ-PDE-evolution}, then
      $u(t,x) / t$ converges uniformly in $x$ to a constant $\lambda_2 \in \R$
      as $t$ goes to $+\infty$.
    \item \label{itm:abstract-ergodicity-iii}
      There exists a constant $\lambda_3$ such that
      \begin{multline*}
        \sup \left\{ c \in \R \mid \text{there is an u.s.c.\ subsolution of
          \labelcref{eq:cell-problem}} \right\} \\
        = \lambda_3 =
        \inf \left\{ c \in \R \mid \text{there is a l.s.c.\ supersolution of
          \labelcref{eq:cell-problem}} \right\} .
      \end{multline*}
  \end{enumerate}
  Moreover, if one of these assertions is true, then $\lambda_1 = \lambda_2 = \lambda_3$.
\end{theorem}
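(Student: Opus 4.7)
The plan is to prove the chain (i) $\Leftrightarrow$ (iii) $\Leftrightarrow$ (ii), from which $\lambda_1 = \lambda_2 = \lambda_3$ follows automatically. Throughout, set
\[
  \underline{\lambda} := \sup\{c \in \R \mid \text{there is an u.s.c.\ subsolution of \labelcref{eq:cell-problem}}\} ,
\]
\[
  \overline{\lambda} := \inf\{c \in \R \mid \text{there is an l.s.c.\ supersolution of \labelcref{eq:cell-problem}}\} ,
\]
and record the general bound $\underline{\lambda} \leq \overline{\lambda}$: were a subsolution $w_1$ at level $c_1 > c_2$ to coexist with a supersolution $w_2$ at level $c_2$, the time-affine functions $w_1(x) + c_1 t$ and $w_2(x) + c_2 t$ would be sub- and supersolutions of the evolution PDE \labelcref{eq:HJ-PDE-evolution} respectively, and the comparison principle for the Cauchy problem would force $(c_1 - c_2) t$ to remain bounded as $t \to \infty$, a contradiction.

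For (i) $\Leftrightarrow$ (iii), let $\underline{\lambda}_\delta := \min_x \delta u_\delta(x)$ and $\overline{\lambda}_\delta := \max_x \delta u_\delta(x)$. Inserting $\underline{\lambda}_\delta \leq \delta u_\delta \leq \overline{\lambda}_\delta$ into \labelcref{eq:HJ-PDE-stationary} shows that $u_\delta$ is simultaneously a continuous subsolution of \labelcref{eq:cell-problem} at $c = \underline{\lambda}_\delta$ and a continuous supersolution at $c = \overline{\lambda}_\delta$, so that
\[
  \underline{\lambda}_\delta \leq \underline{\lambda} \leq \overline{\lambda} \leq \overline{\lambda}_\delta \qquad \text{for every } \delta > 0 .
\]
Under (i), passing to the limit $\delta \to 0$ forces $\underline{\lambda} = \overline{\lambda} = \lambda_1$, giving (iii) with $\lambda_3 = \lambda_1$. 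Conversely, under (iii), for any $\epsilon > 0$ pick a bounded subsolution $\underline{w}_\epsilon$ of \labelcref{eq:cell-problem} at $c = \lambda_3 - \epsilon$ and a bounded supersolution $\overline{w}_\epsilon$ at $c = \lambda_3 + \epsilon$; after the constant shifts $\underline{w}_\epsilon + (\lambda_3 - \epsilon)/\delta - \sup \underline{w}_\epsilon$ and $\overline{w}_\epsilon + (\lambda_3 + \epsilon)/\delta - \inf \overline{w}_\epsilon$, the results are respectively a sub- and a supersolution of \labelcref{eq:HJ-PDE-stationary}, and comparison with $u_\delta$ yields
\[
  \lambda_3 - \epsilon - \delta \, \mathrm{osc}(\underline{w}_\epsilon) \leq \delta u_\delta(x) \leq \lambda_3 + \epsilon + \delta \, \mathrm{osc}(\overline{w}_\epsilon)
\]
uniformly in $x$; letting first $\delta \to 0$ and then $\epsilon \to 0$ delivers (i).

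The implication (iii) $\Rightarrow$ (ii) is obtained analogously: the time-affine functions $\psi^\pm(t,x) := (\lambda_3 \pm \epsilon) t + w_\epsilon(x) + C^\pm_\epsilon$ are sub- and supersolutions of \labelcref{eq:HJ-PDE-evolution} for constants $C^\pm_\epsilon$ chosen so that $\psi^-(0,\cdot) \leq 0 \leq \psi^+(0,\cdot)$; they sandwich $u(t,x)$ from below and above, and dividing by $t$ gives $u(t,x)/t \to \lambda_3$ uniformly when $t \to \infty$ and then $\epsilon \to 0$, i.e., (ii) with $\lambda_2 = \lambda_3$.

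The remaining implication (ii) $\Rightarrow$ (iii) is the hardest step. The guiding idea is to produce a solution of \labelcref{eq:cell-problem} at $c = \lambda_2$ as a long-time profile of $u$. Setting $w_T(x) := u(T,x) - u(T,x_0)$ for a fixed reference point $x_0$, one shows that the family $\{w_T\}_{T \geq 1}$ is equibounded and equicontinuous on the torus---inherited from \Cref{itm:Hamiltonian-modulus} and the recession bound \Cref{itm:Hamiltonian-recession}. Arzel\`a--Ascoli then yields a uniform limit $w_{T_n} \to w$ along some sequence $T_n \to \infty$. Thanks to the time-invariance of \labelcref{eq:HJ-PDE-evolution} and the semigroup relation $u(T+s,\cdot) - u(T,x_0) = S_s[w_T]$ (which uses invariance of the equation under addition of constants to the data), the hypothesis $u(T,x_0)/T \to \lambda_2$ forces the continuous limit $w$ to satisfy $S_s[w] = w + \lambda_2 s$ for every $s \geq 0$, and by uniqueness of the Cauchy problem this means that $w$ is a continuous solution of \labelcref{eq:cell-problem} at $c = \lambda_2$. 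Hence $\overline{\lambda} \leq \lambda_2 \leq \underline{\lambda}$, which combined with $\underline{\lambda} \leq \overline{\lambda}$ gives the common value $\lambda_3 = \lambda_2$. The main obstacle is the equiboundedness and equicontinuity of the family $\{w_T\}$ in the absence of coercivity of $H$, a step that genuinely exploits the structural assumptions on the Hamiltonian rather than the abstract viscosity framework.
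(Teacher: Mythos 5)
This theorem is quoted from \cite[Thm.~4]{AB03}; the paper offers no proof of its own, so your attempt can only be measured against the standard arguments. Most of it is sound: the preliminary inequality $\underline{\lambda}\leq\overline{\lambda}$ via the time-affine barriers, the equivalence \labelcref{itm:abstract-ergodicity-i}~$\Leftrightarrow$~\labelcref{itm:abstract-ergodicity-iii} via $\min_x\delta u_\delta$ and $\max_x\delta u_\delta$, and the implication \labelcref{itm:abstract-ergodicity-iii}~$\Rightarrow$~\labelcref{itm:abstract-ergodicity-ii} are correct and are exactly the classical arguments (modulo the usual convention that the semicontinuous sub/supersolutions in \labelcref{itm:abstract-ergodicity-iii} are taken bounded, which you need for the oscillation terms to be finite).

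The genuine gap is in \labelcref{itm:abstract-ergodicity-ii}~$\Rightarrow$~\labelcref{itm:abstract-ergodicity-iii}. First, you invoke \Cref{itm:Hamiltonian-recession} of \Cref{asm:Hamiltonian}, which is not among the hypotheses of the theorem (only \Cref{itm:Hamiltonian-continuity,itm:Hamiltonian-periodicity,itm:Hamiltonian-modulus} are assumed); and even granting it, neither it nor \Cref{itm:Hamiltonian-modulus} yields equicontinuity of $x\mapsto u(T,x)$ uniformly in $T$ --- the modulus in \Cref{itm:Hamiltonian-modulus} degrades with the size of $p$ and produces time-dependent continuity estimates. More seriously, the asserted equiboundedness and equicontinuity of $\{u(T,\cdot)-u(T,x_0)\}_{T\geq 1}$ is false in the stated generality: if it held, your argument would manufacture a \emph{continuous solution} of \labelcref{eq:cell-problem} at the critical level, i.e.\ a corrector, whereas for non-coercive Hamiltonians satisfying only \Cref{itm:Hamiltonian-continuity,itm:Hamiltonian-periodicity,itm:Hamiltonian-modulus} correctors are known to fail to exist even when $u(t,x)/t$ converges uniformly to a constant. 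This is precisely why assertion \labelcref{itm:abstract-ergodicity-iii} is formulated as an equality between a supremum of subsolution levels and an infimum of supersolution levels rather than as solvability of \labelcref{eq:cell-problem} at $\lambda_3$; compare also \Cref{sec:equicontinuity} of the paper, where equicontinuity of the family of values is treated as an \emph{additional} structural hypothesis, not a consequence of the standing assumptions. Your closing sentence flags this step as ``the main obstacle,'' but it is not an obstacle to be filled in --- the strategy itself must change. The known proofs either pass through a Tauberian theorem giving \labelcref{itm:abstract-ergodicity-ii}~$\Rightarrow$~\labelcref{itm:abstract-ergodicity-i} for the underlying monotone, nonexpansive, constant-commuting semigroup (and then conclude by your own \labelcref{itm:abstract-ergodicity-i}~$\Rightarrow$~\labelcref{itm:abstract-ergodicity-iii}), or build \emph{approximate} sub- and supersolutions at levels within $\epsilon$ of $\lambda_2$ directly from finite-time slices of $u$, exploiting the sub/superadditivity of $t\mapsto\max_x u(t,x)$ and $t\mapsto\min_x u(t,x)$; neither route requires, nor could use, a continuous corrector.
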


When an arbitrary Hamiltonian $H$ satisfies one (hence all) of the above assertions,
we say that it is {\em ergodic}. 
We refer the reader to \cite[Sec.~6]{AB03} for a detailed discussion on the connections
between classical ergodic theory of deterministic dynamical systems and ergodicity of
Hamiltonians.

\section{Unique ergodicity of Hamiltonians}
\label{sec:ergodicity-Hamiltonians}

In this section, we introduce the central concept of this article, namely
{\em unique ergodicity}, which we first apply to arbitrary Hamiltonians.

Unique ergodicity is a property that originally applies to dynamical systems.
Although its definition (existence of a unique invariant probability measure)
cannot be readily extended to differential games or arbitrary Hamiltonians,
its characterization in terms of long time averages of any continuous function along
the trajectories makes this extension possible.

Alvarez and Bardi in \cite{AB10} used this terminology of unique ergodicity and
studied the property for two-player controlled systems.
However, we mention that before this work, the property was already studied for
controlled systems, without being given any explicit name
(see for instance \cite{Ari97,Ari98}). 

\subsection{Definition and characterization}

\begin{definition}[Uniquely ergodic Hamiltonian]
  Let $H: \R^n \times \R^n \to \R$ be an Hamiltonian satisfying
  \Cref{itm:Hamiltonian-continuity,itm:Hamiltonian-periodicity,itm:Hamiltonian-modulus}
  in \Cref{asm:Hamiltonian}.
  We say that $H$ is {\em uniquely ergodic} if, for every continuous and
  $\Z^n$-periodic function $g: \R^n \to \R$, the perturbed Hamiltonian $g+H$ is ergodic,
  i.e., one (hence all) of the assertions in \Cref{thm:ergodicity} holds with $g+H$.
\end{definition}

In the remainder, we denote by $\Cper{\R^n}$ the space of continuous and
$\Z^n$-periodic real functions over $\R^n$.

We next give a characterization of unique ergodicity which is
very similar to Proposition~$2.3$ in \cite{AB10} (as a matter of fact,
most of the proof is borrowed from the latter reference, which we have chosen
to reproduce for the sake of completeness).
However, our result differs from the one of Alvarez and Bardi in two ways.
First, it is not restricted to Hamiltonians associated with differential games
but it applies to arbitrary Hamiltonians.
Second, our definition of unique ergodicity is slightly more general, in the sense that
we only need to consider perturbations of Hamiltonians of the form $g \in \Cper{\R^n}$.

\begin{theorem}[compare with {\cite[Prop.~2.3]{AB10}}]
  \label{thm:unique-ergodicity-Hamiltonians}
  Let $H: \R^n \times \R^n \to \R$ be an Hamiltonian satisfying \Cref{asm:Hamiltonian}.
  It is uniquely ergodic if and only if the following assertions hold:
  \begin{itemize}
    \item {\em (Structural equicontinuity)} for every continuous and $\Z^n$-periodic
      function $g: \R^n \to \R$, if $u_\delta$ denotes the solution of
      \labelcref{eq:HJ-PDE-stationary} with the Hamiltonian $g+H$, then the family
      $\{ \delta u_\delta \}_{0 < \delta \leq 1}$ is equicontinuous;
    \item {\em (Strong maximum principle)} the constant functions are the only continuous
      viscosity solutions of the PDE
      \begin{equation}
        \label{eq:HJ-PDE-homogeneous}
        \tag{HJ$_\infty$}
        \begin{cases}
          H_\infty(x, Dw(x)) = 0 , \quad \text{in} \enspace \R^n , \\
          w \enspace \Z^n \text{-periodic} ,
        \end{cases}
      \end{equation}
      where $H_\infty$ is the recession function of $H$.
  \end{itemize}
\end{theorem}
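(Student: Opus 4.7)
The plan is to establish the two implications separately, with a comparison argument using the evolutionary PDE providing the core content for the strong maximum principle. Assume first that $H$ is uniquely ergodic and fix $g \in \Cper{\R^n}$. For structural equicontinuity, suppose for contradiction that $\{\delta u_\delta\}_{\delta \in (0,1]}$ (where $u_\delta$ solves \labelcref{eq:HJ-PDE-stationary} for $g + H$) fails to be equicontinuous: there exist $\varepsilon > 0$ and sequences $\delta_n \in (0,1]$, $x_n, y_n \in \R^n$ with $\Norm{x_n - y_n} \to 0$ and $\abs{\delta_n u_{\delta_n}(x_n) - \delta_n u_{\delta_n}(y_n)} \geq \varepsilon$. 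Passing to subsequences, if $\delta_n \to 0$ then unique ergodicity yields $\delta_n u_{\delta_n} \to \lambda(g)$ uniformly, so the difference tends to zero; if $\delta_n \to \delta^\star > 0$, continuous dependence of viscosity solutions on $\delta$ (via the comparison principle) makes $\delta \mapsto u_\delta$ continuous into $\Cper{\R^n}$ on $[\delta^\star/2, 1]$, so $\{u_\delta\}$ is compact and hence equicontinuous over this range, and the difference again tends to zero. Both cases contradict $\varepsilon > 0$.

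The strong maximum principle is the heart of the argument. Let $w \in \Cper{\R^n}$ be a non-constant viscosity solution of \labelcref{eq:HJ-PDE-homogeneous}. For each $R > 0$, take as perturbation $g_R := -Rw$, and let $v_R$ denote the solution of \labelcref{eq:HJ-PDE-evolution} with Hamiltonian $g_R + H$. Introduce the explicit comparison functions $\varphi^\pm(t,x) := tRw(x) \pm M_H t$. Exploiting the positive homogeneity of $H_\infty$ in $p$, the fact that $w$ is both a viscosity sub- and supersolution of \labelcref{eq:HJ-PDE-homogeneous}, and the bound $\abs{H(x,p) - H_\infty(x,p)} \leq M_H$ from the recession-function hypothesis, one verifies at any test point $(t_0,x_0)$ with $t_0 > 0$ that $\varphi^-$ is a viscosity subsolution and $\varphi^+$ a viscosity supersolution of $\partial_t u + g_R + H(x,Du) = 0$, with $\varphi^\pm(0,\cdot) = 0$. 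The comparison principle gives $\varphi^- \leq v_R \leq \varphi^+$, i.e.,
\[
  \abs{v_R(t,x)/t - Rw(x)} \leq M_H \quad \text{for all } t > 0, \, x \in \R^n .
\]
Since $H$ is uniquely ergodic, $g_R + H$ is ergodic, so $v_R(t,x)/t \to \lambda(g_R)$ uniformly as $t \to +\infty$. Passing to the limit in the inequality yields $\abs{Rw(x) - \lambda(g_R)} \leq M_H$ for all $x$, whence $R(\max w - \min w) \leq 2M_H$; sending $R \to +\infty$ forces $\max w = \min w$, contradicting the non-constancy of $w$.

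For the converse, assume structural equicontinuity and the strong maximum principle, and fix $g \in \Cper{\R^n}$; comparison of $u_\delta$ with constants gives $\supnorm{\delta u_\delta} \leq \supnorm{g} + \supnorm{H(\cdot,0)}$. Equicontinuity combined with this bound and Arzel\`a-Ascoli on $\torus{n}$ yields, along any sequence $\delta_n \to 0$, a subsequence with $\delta_n u_{\delta_n} \to \Lambda$ uniformly. Setting $w_\delta := \delta u_\delta$ and multiplying the stationary equation by $\delta$ gives
\[
  \delta w_\delta(x) + \delta g(x) + \delta H(x, \delta^{-1} D w_\delta(x)) = 0 ,
\]
while the recession bound implies $\abs{\delta H(x, \delta^{-1} p) - H_\infty(x, p)} \leq \delta M_H \to 0$ uniformly in $(x,p)$. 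Stability of viscosity solutions then identifies $\Lambda$ as a viscosity solution of \labelcref{eq:HJ-PDE-homogeneous}, which is constant by the strong maximum principle. To show $\Lambda$ is independent of the subsequence, observe that $u_\delta$ is itself a viscosity subsolution of the cell problem \labelcref{eq:cell-problem} for $g + H$ with $c = \delta \inf u_\delta$, and a supersolution with $c = \delta \sup u_\delta$; hence $\delta \inf u_\delta \leq \bar\lambda \leq \underline\lambda \leq \delta \sup u_\delta$, where $\bar\lambda$ and $\underline\lambda$ are the supremum and infimum appearing in \Cref{thm:ergodicity}(iii). Both extremes tend to $\Lambda$ along the subsequence, so $\bar\lambda = \underline\lambda = \Lambda$, an intrinsic value common to every subsequence. \Cref{thm:ergodicity} then gives ergodicity of $g + H$, and thus unique ergodicity of $H$.

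The main technical hurdle is verifying that $\varphi^\pm$ are genuine viscosity sub/supersolutions despite $w$ being only continuous. The key point for $\varphi^-$: if a smooth test function $\Phi$ touches $\varphi^-$ from above at $(t_0,x_0)$ with $t_0 > 0$, then freezing $x = x_0$ yields $\partial_t \Phi(t_0,x_0) = Rw(x_0) - M_H$, while freezing $t = t_0$ shows that $(t_0 R)^{-1}\bigl[\Phi(t_0,\cdot) + M_H t_0\bigr]$ touches $w$ from above at $x_0$; the subsolution property of $w$ for \labelcref{eq:HJ-PDE-homogeneous} together with the positive homogeneity of $H_\infty$ (valid since $t_0 R > 0$) gives $H_\infty(x_0, D_x\Phi(t_0,x_0)) \leq 0$, and $\abs{H - H_\infty} \leq M_H$ closes the evolutionary subsolution estimate. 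The computation for $\varphi^+$ is symmetric.
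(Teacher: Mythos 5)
Your proof is correct and follows essentially the same route as the paper's: the identical Arzel\`a--Ascoli plus viscosity-stability argument (with the subsequence-independent limit identified through assertion (iii) of \Cref{thm:ergodicity}) for sufficiency, and the same key device of perturbing by a large multiple $-Rw$ of the candidate solution and sandwiching the resulting value between $Rw \pm M_H$ (suitably normalized) for the strong maximum principle. The only variation is that you run this sandwich through the Cauchy problem \labelcref{eq:HJ-PDE-evolution} with the barriers $tRw(x)\pm M_H t$, whereas the paper uses the stationary problem \labelcref{eq:HJ-PDE-stationary} with the barriers $\delta^{-1}(\rho w \mp M_H)$; by \Cref{thm:ergodicity} the two formulations are interchangeable, so this is a cosmetic difference.
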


\begin{proof}
  Let us first assume that $H$ is uniquely ergodic.
  Let $g \in \Cper{\R^n}$ and, for $\delta \in (0,1]$, let $u_\delta$ be the solution of
  \labelcref{eq:HJ-PDE-stationary} with the Hamiltonian $g+H$.
  Since $g+H$ satisfies \Cref{asm:Hamiltonian}, the standard comparison principle for
  viscosity solutions holds.
  A first straightforward application of this principle yields that
  the family $\{ \delta u_\delta \}_{0 < \delta \leq 1}$ is uniformly bounded by
  $M_g = \supnorm{g(\cdot) + H(\cdot,0)}$.
  Then, using this fact and once again the comparison principle, we get that
  \[
    \supnorm{u_\delta - u_{\delta'}} \leq M_g \abs{\frac{1}{\delta} - \frac{1}{\delta'}}
  \]
  for all $\delta, \delta' \in (0,1]$.
  Since the solution of \labelcref{eq:HJ-PDE-stationary} is continuous, we further deduce that
  the function $(\delta,x) \mapsto \delta u_\delta(x)$ is continuous on $(0,1] \times \R^n$.
  Together with the hypothesis that $\delta u_\delta(x)$ converges uniformly in $x$ to
  a constant when $\delta$ goes to 0, it entails the equicontinuity of
  $\{ \delta u_\delta \}_{0 < \delta \leq 1}$.

  To show that the second point (strong maximum principle) holds, let us consider any
  continuous viscosity solution $w$ of \labelcref{eq:HJ-PDE-homogeneous}.
  Fix $\rho > 0$ and denote by $u^\rho_\delta$ the solution of \labelcref{eq:HJ-PDE-stationary}
  with the Hamiltonian $-\rho w + H$, i.e., the solution of
  \begin{equation}
    \label{eq:HJ-PDE-perturbed}
    \begin{cases}
      \delta u(x) - \rho w(x) + H(x, Du(x)) = 0 , \quad \text{in} \enspace \R^n , \\
      u \enspace \Z^n \text{-periodic} .
    \end{cases}
  \end{equation}
  Let us show that $w^\rho_\delta = \frac{1}{\delta} (\rho w - M_H)$ (where $M_H$ is
  the constant defined in \Cref{itm:Hamiltonian-recession} of \Cref{asm:Hamiltonian}
  for the Hamiltonian $H$) is a viscosity subsolution of \labelcref{eq:HJ-PDE-perturbed}.
  To that end, for any $x \in \R^n$, let us consider any continuously differentiable
  function $\varphi$ such that $w^\rho_\delta - \varphi$ has a local maximum point at $x$.
  Then the function $w - \frac{\delta}{\rho} \varphi$ has also a local maximum at $x$,
  which implies that $H_\infty(x, \frac{\delta}{\rho} D\varphi(x)) \leq 0$.
  The positive homogeneity of $H_\infty$ yields $H_\infty(x, D\varphi(x)) \leq 0$.
  We then have
  \[
    \delta w^\rho_\delta(x) - \rho w(x) + H(x, D\varphi(x)) = -M_H + H(x, D\varphi(x))
    \leq H_\infty(x, D\varphi(x)) \leq 0 . 
  \]
  This inequality proves that $w^\rho_\delta$ is a viscosity subsolution of
  \labelcref{eq:HJ-PDE-perturbed} at any point $x$.
  Since $w$ is continuous, so is $w^\rho_\delta$, and therefore the comparison principle applies,
  leading to $w^\rho_\delta = \frac{1}{\delta} (\rho w - M_H) \leq u^\rho_\delta$.
  Similarly, we can show that $\frac{1}{\delta} (\rho w + M_H)$ is
  a viscosity supersolution of \labelcref{eq:HJ-PDE-perturbed},
  hence that $u^\rho_\delta \leq \frac{1}{\delta} (\rho w + M_H)$.

  Since $H$ is uniquely ergodic, we know that $\delta u^\rho_\delta$ converges
  to some constant $\lambda_\rho$ when $\delta$ goes to $0$.
  Thus, passing to the limit in the latter inequalities, we get
  \[
    \rho w(x) - M_H \leq \lambda_\rho  \leq \rho w(y) + M_H
  \]
  for all $x, y \in \R^n$ and all $\rho > 0$, which yields
  \[
    w(x) - w(y) \leq \frac{2 M_H}{\rho} .
  \]
  Letting $\rho$ goes to $+\infty$, we obtain that $w(x) - w(y) \leq 0$
  for all $x, y \in \R^n$, hence that $w$ is constant.
  This concludes the necessary part of the proof.

  We now prove the sufficient part.
  To that end, we assume that the structural equicontinuity property and that
  the strong maximum principle hold true.
  Let $g$ be any function in $\Cper{\R^n}$ and let us denote by $u_\delta$ the solution of
  \Cref{eq:HJ-PDE-stationary} with the Hamiltonian $g+H$.
  We have already mentioned at the beginning of the proof that the family
  $\{ \delta u_\delta \}_{0 < \delta \leq 1}$ is uniformly bounded.
  Since it is also equicontinuous by hypothesis, the Arzel\`a-Ascoli theorem entails
  the existence of a subsequence that converges uniformly to some continuous and
  $\Z^n$-periodic function $w$.

  Multiplying \labelcref{eq:HJ-PDE-stationary} by $\delta$, we get that the function
  $\delta u_\delta$ solves in $\R^n$ the equation
  \[
    \delta u(x) + \delta g(x) + \delta H\left( x, \delta^{-1} Du(x) \right) = 0 
  \]
  with $u$ being $\Z^n$-periodic.
  Since $(x,r,p) \mapsto \delta r + \delta g(x) + \delta H(x, \delta^{-1} p)$
  converges as $\delta$ goes to $0$ to $(x,r,p) \mapsto H_\infty(x, p)$ locally uniformly
  in $\R^n \times \R \times \R^n$, the stability property of viscosity solutions yields that
  the uniform limit $w$ is solution of \labelcref{eq:HJ-PDE-homogeneous},
  hence constant since the strong maximum principle applies.
  We then deduce that \Cref{itm:abstract-ergodicity-iii} of \Cref{thm:ergodicity} is satisfied.
  Indeed the implication
  \labelcref{itm:abstract-ergodicity-i}~$\Rightarrow$~\labelcref{itm:abstract-ergodicity-iii}
  remains true if, instead of the whole family $\{\delta u_\delta\}$,
  there is only a subsequence of $\{\delta u_\delta\}$ that converges uniformly to a constant
  (for the details, see the proof of \cite[Thm.~4]{AB03}).
  Thus the Hamiltonian $g+H$ is ergodic which proves that $H$ is uniquely ergodic.
\end{proof}

With a straightforward adaption of the proof, which we leave to the reader, we can also get
a sufficient condition of ergodicity with the following weaker hypothesis.

\begin{proposition}
  \label{prop:ergodicity-Hamiltonians}
  Let $H$ be an arbitrary Hamiltonian satisfying \Cref{asm:Hamiltonian}.
  If the family $\{ \delta u_\delta \}_{0 < \delta \leq 1}$, where $u_\delta$ is
  the solution of \labelcref{eq:HJ-PDE-stationary},
  is equicontinuous and if the strong maximum principle holds, then $H$ is ergodic.
  \qed
\end{proposition}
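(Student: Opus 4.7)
The plan is to mimic the sufficient part of the proof of \Cref{thm:unique-ergodicity-Hamiltonians}, simply setting the perturbation $g \equiv 0$ and otherwise leaving the argument unchanged. So I would start from the two hypotheses (equicontinuity of $\{\delta u_\delta\}_{0<\delta\leq 1}$ and the strong maximum principle for \labelcref{eq:HJ-PDE-homogeneous}) and verify \Cref{itm:abstract-ergodicity-iii} of \Cref{thm:ergodicity}, which then gives ergodicity of $H$.

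First, I would note that the family $\{\delta u_\delta\}_{0<\delta \leq 1}$ is uniformly bounded: the constants $\pm \supnorm{H(\argt,0)}$ are sub- and supersolutions of \labelcref{eq:HJ-PDE-stationary}, so the comparison principle (available under \Cref{itm:Hamiltonian-continuity,itm:Hamiltonian-periodicity,itm:Hamiltonian-modulus} of \Cref{asm:Hamiltonian}) yields $\supnorm{\delta u_\delta} \leq \supnorm{H(\argt,0)}$. Combined with the assumed equicontinuity and the $\Z^n$-periodicity, the Arzel\`a-Ascoli theorem produces a sequence $\delta_k \to 0$ along which $\delta_k u_{\delta_k}$ converges uniformly on $\R^n$ to some $w \in \Cper{\R^n}$.

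Next, I would pass to the limit in the PDE. Multiplying \labelcref{eq:HJ-PDE-stationary} by $\delta$, the function $\delta u_\delta$ is the continuous viscosity solution of
\[
  \delta u(x) + \delta H\bigl( x, \delta^{-1} Du(x) \bigr) = 0 \quad \text{in } \R^n .
\]
The recession bound in \Cref{itm:Hamiltonian-recession} of \Cref{asm:Hamiltonian} together with positive homogeneity gives $\delta H(x,\delta^{-1}p) \to H_\infty(x,p)$ locally uniformly in $(x,p)$, and obviously $\delta u \to 0$ on bounded sets. The standard stability property of viscosity solutions then identifies the uniform limit $w$ as a continuous viscosity solution of \labelcref{eq:HJ-PDE-homogeneous}. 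By the strong maximum principle $w$ must be a constant $\lambda$.

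The only step with any subtlety is concluding that the \emph{whole} family $\delta u_\delta$ (not just the extracted subsequence) converges to this constant $\lambda$, and that this constant equals the common value in \Cref{itm:abstract-ergodicity-iii} of \Cref{thm:ergodicity}. I would handle this exactly as in the closing lines of the proof of \Cref{thm:unique-ergodicity-Hamiltonians}: the argument recorded in \cite[Thm.~4]{AB03} shows that the existence of any subsequence of $\{\delta u_\delta\}$ converging uniformly to a constant already implies \Cref{itm:abstract-ergodicity-iii}, and therefore (by the equivalence in \Cref{thm:ergodicity}) also \Cref{itm:abstract-ergodicity-i} for the full family. This delivers ergodicity of $H$ and completes the proof.
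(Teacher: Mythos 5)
Your proposal is correct and follows essentially the same route as the paper: the paper explicitly obtains this proposition as "a straightforward adaptation" of the sufficient part of the proof of \Cref{thm:unique-ergodicity-Hamiltonians} (uniform bound by $\supnorm{H(\argt,0)}$ via comparison, Arzel\`a--Ascoli, stability of viscosity solutions to identify a subsequential limit as a solution of \labelcref{eq:HJ-PDE-homogeneous}, the strong maximum principle to make it constant, and the observation from \cite[Thm.~4]{AB03} that a uniformly convergent subsequence already yields \Cref{itm:abstract-ergodicity-iii} of \Cref{thm:ergodicity}). Nothing is missing.
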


\begin{example}
  \label{ex:running-example-1}
  Consider a differential game with state space in $\R^2$ whose dynamics
  is defined for all $x \in \R^2$ by
  \[
    f(x,a,b) = \begin{pmatrix} a \\ \gamma b \end{pmatrix} ,
    \quad a,b \in [-1,1] ,
  \]
  with $0 < \gamma \leq 1$.
  Then, as we shall see in the next section (see \Cref{ex:equicontinuity}), for any
  payoff function $\ell$ satisfying \Cref{asm:standing-assumption}, the family of value
  functions $\{ \delta v_\delta \}_{0 < \delta \leq 1}$ is equicontinuous.
  On the other hand, the recession operator of the Hamiltonian of the game is
  \[
    H_\infty(x, p) = \abs{p_1} - \gamma \abs{p_2} ,
    \quad x , p = \begin{pmatrix} p_1 \\ p_2 \end{pmatrix} \in \R^2 ,
  \]
  and we know that \labelcref{eq:HJ-PDE-homogeneous} has a nonconstant solution if and only if
  $\gamma \in \Q$ (see e.g., \cite{Car10}).
  Thus, the game is ergodic if $\gamma$ is irrational.
\end{example}

\subsection{Equicontinuity of $\boldsymbol{\{ \delta u_\delta \}}$}
\label{sec:equicontinuity}

\Cref{thm:unique-ergodicity-Hamiltonians,prop:ergodicity-Hamiltonians} tell us that
(unique) ergodicity relies on two distinct properties.
As we shall see in \Cref{sec:PDE-approach}, the strong maximum principle is a qualitative
feature of the underlying dynamical system, which can be systematically characterized.
On the other hand, the (structural) equicontinuity property appears more difficult to
apprehend and is rather related with quantitative aspects of the underlying dynamics
(e.g., controllability assumptions with specific time estimates).
We next review two sufficient conditions on any Hamiltonian $H$ that guarantee
the equicontinuity of the family $\{ \delta u_\delta \}$.
Let us mention that for both conditions, the equicontinuity property is stable by perturbations
of $H$ with functions $g \in \Cper{\R^n}$, that is, equicontinuity is ``structural''
in the sense of \Cref{thm:unique-ergodicity-Hamiltonians}.

\medskip
The first of these conditions is a classic:
it is well known that equicontinuity of $\{ \delta u_\delta \}$ holds if $H$ is
coercive in the second variable, i.e., if
\[
  \lim_{|p| \to +\infty} H(x,p) = +\infty
\]
uniformly in $x$.
More precisely, this property implies that the family $\{ u_\delta \}$ is uniformly
Lipschitz continuous.
This yields in particular the existence of a corrector, that is,
a solution to \labelcref{eq:cell-problem} (see \cite{LPV87}).

\medskip
Secondly, the equicontinuity property holds if $H$ is uniformly
continuous in $x$, uniformly with respect to $p$, i.e., if
there exists a modulus of continuity $\omega$ such that
\begin{equation}
  \label{eq:equicontinuity}
  \abs{H(x,p) - H(y,p)} \leq \omega(\Norm{x-y})
\end{equation}
for all $x,y \in \R^n$ and all $p \in \R^n$.

Indeed, the equicontinuity of $\{ \delta u_\delta \}$ readily follows from
the comparison principle, after noticing that
$u_\delta(\cdot+h) - \delta^{-1} \omega(\Norm{h})$ and
$u_\delta(\cdot+h) + \delta^{-1} \omega(\Norm{h})$ are respectively subsolution
and supersolution of \labelcref{eq:HJ-PDE-stationary} (see \cite{Car10}).

\begin{example}
  Assume that $H(x,p) = \widetilde{H}(p) - \tilde{\ell}(x)$, where the function
  $\widetilde{H} : \R^n \to \R$ is continuous and $\tilde{\ell} : \R^n \to \R$ is
  continuous and $\Z^n$-periodic.
  Then $H$ satisfies \labelcref{eq:equicontinuity}, hence the structural equicontinuity
  property holds.
\end{example}

\begin{example}
  \label{ex:equicontinuity}
  Assume that $H$ is the Hamiltonian of a deterministic zero-sum differential game
  $\Gamma^-$ for which the function $f$ that controls the dynamics only depends on
  the control variables and not on the state, that is, $f(x,a,b) = \tilde{f}(a,b)$
  for some continuous function $\tilde{f} : A \times B \to \R^n$ and for all $x$.
  Then $H$ writes 
  \[
    H(x,p) = \min_{b \in B} \max_{a \in A} \big\{ -\scalar{\tilde{f}(a,b)}{p} - \ell(x,a,b) \big\}
  \]
  and one can easily see that it satisfies condition \labelcref{eq:equicontinuity}
  with modulus of continuity $\omega_\ell$.
  Thus the structural equicontinuity property holds.
  Observe that if $\ell(x,a,b) = \tilde{\ell}(x)$ for all $x,a,b$ and
  some $\tilde{\ell} \in \Cper{\R^n}$, then we recover as a special case the previous example.
\end{example}

\section{Unique ergodicity of games via PDE approach}
\label{sec:PDE-approach}

In the whole section, we fix a deterministic zero-sum differential game
in its lower form, $\Gamma^-$, which satisfies \Cref{asm:standing-assumption}.
We denote by $H$ the Hamiltonian of the game, defined in~\labelcref{eq:Hamiltonian}, and
by $H_\infty$ its recession operator \labelcref{eq:homogeneous-Hamiltonian}.

Since the values $v_\delta(\cdot)$ and $v(t,\cdot)$ of the game $\Gamma^-$ are characterized as
viscosity solutions of Hamilton-Jacobi-Isaacs PDEs (\Cref{thm:HJ-PDE-characterization}),
we can define the (unique) ergodicity of $\Gamma^-$ by applying the definitions to
its Hamiltonian $H$.
This leads to the following definition.

\begin{definition}[Ergodicity of differential games]
  The differential game $\Gamma^-$ is {\em ergodic} if the normalized value
  $\delta v_\delta(x)$ converges uniformly in $x$ to a constant when $\delta$
  goes to $0$ (or equivalently if $v(t,x) / t$ converges uniformly to a constant
  when $t$ goes to $+\infty$).

  The game $\Gamma^-$ is {\em uniquely ergodic} if for every continuous
  and $\Z^n$-periodic function $g : \R^n \to \R$, the perturbed game
  with running payoff $(x,a,b) \mapsto \ell(x,a,b) + g(x)$,
  all other data being equal, is ergodic.
\end{definition}

Thus, \Cref{thm:unique-ergodicity-Hamiltonians} or
\Cref{prop:ergodicity-Hamiltonians} already provides conditions for (unique) ergodicity.
The purpose of this section is to give other conditions, which rely on the main tool of
this article, namely {\em dominions}.
We first introduce this concept, which only rely on the controlled
system~\labelcref{eq:controlled-system}, and then use it to characterize
the (unique) ergodicity property.

\subsection{Dominions}

Informally speaking, dominions are subsets of state that can be made approximately invariant
by one player for an arbitrary period of time.
This is an adaptation to the framework of differential games of a notion that was used
to study zero-sum repeated games, played in discrete time
(see in particular the companion works \cite{AGH19,Hoc19}).
However, as we will prove in \Cref{sec:dominion-characterization}, the notion coincides
with the one of leadership domain and discriminating domain which appears in viability theory
(see, e.g., \cite{Car96}).

Before giving the formal definition of a dominion, and with the aim of simplifying the notation,
let us further mention that we shall hereafter write $\X[x,\alpha,b]{t}$, instead of
$\X[x,\alpha{[b]},b]{t}$, the solution of the controlled system~\labelcref{eq:controlled-system}
induced by a strategy $\alpha \in \NASmin$ of player~1 and a control $b \in \Bcal$ of player~2.
Also, we let $\dist{K}{x}$ be the distance of a point $x \in \R^n$ to
a subset $K \subset \R^n$, that is,
\[
  \dist{K}{x} := \inf_{y \in K} \Norm{y - x} .
\]

\begin{definition}[Dominions]
  \label{def:dominions}
  A dominion of the first player in the lower game $\Gamma^-$ is a nonempty closed set
  $D \subset \R^n$ such that for every initial position in $D$, player~1 can force
  the state to remain approximately in $D$ for any arbitrary period of time, meaning that
  \begin{multline*}
    \forall x \in D , \quad \forall \varepsilon > 0 , \quad \forall T \geq 0 , \quad
    \exists \alpha \in \NASmin , \\
    \quad \forall b \in \Bcal, \quad \forall t \in [0,T] , \quad
    \dist{D}{\X[x,\alpha,b]{t}} \leq \varepsilon .
  \end{multline*}

  Dominions for the second player are defined accordingly.
  Specifically, a dominion of player~2 in $\Gamma^-$ is a nonempty closed set
  $D \subset \R^n$ such that
  \begin{multline*}
    \forall x \in D , \quad \forall \varepsilon > 0 , \quad \forall T \geq 0 , \quad
    \forall \alpha \in \NASmin , \quad \exists b \in \Bcal , \\
    \forall t \in [0,T] , \quad \dist{D}{\X[x,\alpha,b]{t}} \leq \varepsilon .
  \end{multline*}
\end{definition}

The definition of dominions in the upper game $\Gamma^+$ is identical after switching
the identity of the players.
As we shall see in \Cref{sec:dominion-characterization}, when Isaacs' condition holds,
the definitions in the lower and the upper game coincide.

We next illustrate the notion of dominion with two examples.
In the first one, Isaacs' condition holds true, which allows us to choose for each player
the more convenient definition.
In the second example however, we provide a game for which the sets of dominions for each player
are not the same in the lower and the upper form.

\begin{example}
  \label{ex:running-example-2}
  Consider the game already introduced in \Cref{ex:running-example-1},
  whose controlled system is defined in $\R^2$ by the function
  \[
    f(x,a,b) = \begin{pmatrix} a \\ \gamma b \end{pmatrix} ,
    \quad a,b \in [-1,1] ,
  \]
  with $0 < \gamma \leq 1$ (and with any payoff function $\ell$ satisfying
  \Cref{asm:standing-assumption}).
  Let us observe that Isaacs' condition holds true for $H_\infty$:
  \begin{multline*}
    H_\infty(x,p)
    = \min_{b \in [-1,1]} \max_{a \in [-1,1]} \{ -a p_1 - \gamma b p_2 \} \\
    = \max_{a \in [-1,1]} \min_{b \in [-1,1]} \{ -a p_1 - \gamma b p_2 \} 
    = \abs{p_1} - \gamma \abs{p_2} .
  \end{multline*}
  Hence, according to \Cref{rmk:lower-upper-games} in
  \Cref{sec:dominion-characterization}, the dominions are the same in the lower and
  the upper game, and we can use for each player the simplest definition in order to
  describe them, namely
  for player~1: dominions as defined in $\Gamma^-$;
  for player~2: dominions as defined in $\Gamma^+$.
  Following this observation, we can easily see that any line of the form
  \[
    V^1_\mu = x + \begin{pmatrix} \mu \\ 1 \end{pmatrix} \R :=
    \left\{ \begin{pmatrix} x_1 + s \mu \\ x_2 + s \end{pmatrix} \right\}_{s \in \R}
  \]
  with $x \in \R^2$ and $-1 \leq \mu \gamma \leq 1$ is a dominion of player~1.
  Indeed, in the lower game, if she uses the strategy $\alpha[b] = \mu \gamma b$
  against all $b \in \Bcal$, then $V^1_\mu$ will be invariant for any initial point in it.
  Dually, any line of the form
  \[
    V^2_\nu = x + \begin{pmatrix} 1 \\ \nu \end{pmatrix} \R :=
    \left\{ \begin{pmatrix} x_1 + s \\ x_2 + s \nu \end{pmatrix} \right\}_{s \in \R}
  \]
  with $-\gamma \leq \nu \leq \gamma$ is a dominion of player~2.
  Indeed, in the upper game, he can choose the strategy $\beta[a] = \frac{\nu}{\gamma} a$
  against all $a \in \Acal$ to ensure the invariance of $V^2_\nu$.
\end{example}

\begin{example}
  \label{ex:no-isaacs}
  Contrary to the latter example, let us now illustrate the situation in which Isaacs' condition
  fails and the set of dominions of each player is not the same in the lower and the upper game.
  So, consider a differential game with state space in $\R$ whose dynamics is defined
  for all $x \in \R$ by
  \[
    f(x,a,b) = (a-b)^2 , \quad a,b \in [-1,1] ,
  \]
  and the payoff function is any continuous function that is $\Z$-periodic in $x$.
  For such a game, we have $H_\infty^-(x,p) = \max(0, -p)$ whereas
  $H_\infty^+(x,p) = \min(0, -p)$ for all $p \in \R$, which proves that Isaacs' condition
  does not hold.
  
  Then observe that in the lower game, any single point is a dominion of player~1 whereas
  the dominions of player~2 are all of the form $[x, +\infty)$.
  Symmetrically, the set of dominions of player~2 in the upper game contains any singleton,
  whereas the set of dominions for player~1 contains only intervals of the form $[x, +\infty)$.
  We further mention that the lower and the upper game are both uniquely ergodic, as we shall see
  with \Cref{thm:unique-ergodicity-PDE-approach}.
\end{example}

Before going on with ergodicity conditions, let us recall that the state space is essentially
the $n$-torus $\torus{n}$.
However, the image of a closed set in $\torus{n}$ is not necessarily closed,
which is problematic when considering dominions.
To illustrate this issue, think of the dominions $V^1_\mu$ and $V^2_\nu$ described
in \Cref{ex:running-example-2} when $\mu$ or $\nu$ are irrational, i.e.,
when their image in $\torus{2}$ is dense.
For this reason, we introduce the following definition of ``dominion in the torus''.
Note that we let $\pi : \R^n \to \torus{n}$ be the quotient map.

\begin{definition}[Dominion in the torus]
  A set $K \subset \torus{n}$ is a dominion in the torus of some player
  if $K = \clo{\pi(D)}$ for some dominion $D \subset \R^n$ of that player in $\Gamma^-$.
\end{definition}

Note that if $D \subset \R^n$ is a dominion, then
$\pi^{-1} \bigp{ \clo{\pi(D)} } = \clo{\pi^{-1}(\pi(D))}$ is also a dominion in~$\R^n$.
Furthermore, the latter set is $\Z^n$-translation-invariant, meaning that for every
$x \in \pi^{-1} \bigp{ \clo{\pi(D)} }$ and every $k \in \Z^n$, we have
$x + k \in \pi^{-1} \bigp{ \clo{\pi(D)} }$.

\subsection{Necessary condition for unique ergodicity}

We provide here a necessary condition for unique ergodicity involving dominions in the torus.
The result is based on the very simple idea that a player will leverage one of his dominion if
the payoff is more favorable on this dominion than in the rest of the states.

\begin{proposition}
  \label{prop:unique-ergodicity-only-if}
  If the differential game $\Gamma^-$ is uniquely ergodic, then the intersection of
  every dominion of player~1 with every dominion of player~2 in the torus is nonempty,
  that is, for every dominion $D^1$ of player~1 and every dominion $D^2$ of player~2
  in $\R^n$, we have
  \[
    \clo{\pi(D^1)} \cap \clo{\pi(D^2)} \neq \emptyset .
  \]
\end{proposition}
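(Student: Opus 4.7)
The plan is to argue by contrapositive: assuming dominions $D^1$ of player~1 and $D^2$ of player~2 whose torus images $K^1 := \clo{\pi(D^1)}$ and $K^2 := \clo{\pi(D^2)}$ are disjoint, I produce a perturbation $g \in \Cper{\R^n}$ for which the perturbed game fails to be ergodic, thereby contradicting unique ergodicity. Since $K^1, K^2$ are disjoint closed subsets of the compact Hausdorff torus $\torus{n}$, Urysohn's lemma yields a continuous $\tilde g : \torus{n} \to [0,1]$ vanishing on $K^1$ and identically $1$ on $K^2$; set $g := \tilde g \circ \pi$, consider the perturbation $Mg$ for a large constant $M > 0$ to be chosen, and write $v^{(M)}(t,\cdot)$ for the lower value of the perturbed game in finite horizon $t$. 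By unique ergodicity, $v^{(M)}(t,\cdot)/t$ converges uniformly to a constant $\lambda^{(M)}$ as $t \to \infty$; the goal is to bound $\lambda^{(M)}$ from above using $D^1$ and from below using $D^2$, eventually contradicting large $M$.

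Recall from the remark following the definition of dominion in the torus that the $\Z^n$-invariant lifts $\tilde D^i := \pi^{-1}(K^i)$ are again dominions, and note that $\dist{\tilde D^i}{x}$ coincides with the quotient distance from $\pi(x)$ to $K^i$ by $\Z^n$-invariance. Fix $\varepsilon \in (0, 1/2)$ and use the uniform continuity of $g$ (inherited from compactness of $\torus{n}$) to pick $\eta > 0$ such that $g \leq \varepsilon$ on the $\eta$-neighborhood of $\tilde D^1$ and $g \geq 1 - \varepsilon$ on the $\eta$-neighborhood of $\tilde D^2$. For $x \in \tilde D^1$ and any horizon $t > 0$, the definition of dominion provides $\alpha \in \NASmin$ keeping $\X[x,\alpha,b]{s}$ within distance $\eta$ of $\tilde D^1$ for all $b \in \Bcal$ and $s \in [0,t]$, so the perturbed running payoff is at most $M_\ell + M\varepsilon$ along such trajectories; hence $v^{(M)}(t,x) \leq t(M_\ell + M\varepsilon)$ and letting $t \to \infty$ yields $\lambda^{(M)} \leq M_\ell + M\varepsilon$. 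Symmetrically, for $y \in \tilde D^2$ and every $\alpha \in \NASmin$, player~2's dominion property produces a control $b \in \Bcal$ keeping the trajectory within $\eta$ of $\tilde D^2$ on $[0,t]$, whence the perturbed running payoff is at least $-M_\ell + M(1-\varepsilon)$ and taking the infimum over $\alpha$ gives $v^{(M)}(t,y) \geq t(-M_\ell + M(1-\varepsilon))$, so $\lambda^{(M)} \geq -M_\ell + M(1-\varepsilon)$.

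Combining the two bounds forces $M(1 - 2\varepsilon) \leq 2 M_\ell$, which is absurd once $M$ is chosen large enough (e.g., $\varepsilon = 1/4$ and $M > 4 M_\ell$), contradicting unique ergodicity. The main subtlety lies in the interaction of quantifiers in the definition of dominion: the strategy $\alpha$ (resp., control $b$) furnished by it depends on the horizon $T$, and it is precisely the finite-horizon formulation of the ergodic limit (assertion~\labelcref{itm:abstract-ergodicity-ii} of \Cref{thm:ergodicity}) that makes this $T$-dependent choice harmless, since we are free to set $T = t$ for the horizon in which we bound $v^{(M)}(t,\cdot)$. The remaining ingredients---Urysohn separation, the identification of Euclidean and torus distances on $\Z^n$-invariant lifts, and passage from the given $D^i$ to the $\Z^n$-invariant lift $\tilde D^i$---require no new idea.
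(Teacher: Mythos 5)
Your proof is correct, and it follows the same core strategy as the paper: argue by contrapositive, separate the two disjoint torus dominions by a continuous periodic perturbation that is small near player~1's dominion and large near player~2's, and then use the respective dominion properties to force the limit value to be bounded above on $D^1$ and below on $D^2$ by incompatible constants. The implementation differs in one respect worth noting. The paper works with the discounted value $v_\delta^g$ and routes the argument through \cref{lem:dominion-characterization-i,lem:dominion-characterization-ii}, which recast the dominion property as the statement that the discounted occupation of the $\varepsilon$-neighborhood $D_\varepsilon$ equals $1$; the explicit perturbation taking values $0$ and $3M_\ell$ then sandwiches $\ell+g$ between multiples of indicator functions and yields $\delta v_\delta^g \leq M_\ell$ on $D^1$ and $\geq 2M_\ell$ on $D^2$. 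You instead use the finite-horizon value and feed the raw quantified definition of a dominion directly into $v^{(M)}(t,\cdot)$, setting the dominion horizon $T$ equal to the game horizon $t$ --- which, as you correctly observe, is exactly what assertion~\labelcref{itm:abstract-ergodicity-ii} of \cref{thm:ergodicity} permits. This bypasses the two characterization lemmas entirely (they are still needed elsewhere in the paper, e.g.\ for \cref{lem:argmin-max}, so nothing is saved globally), at the cost of introducing the auxiliary amplitude $M$ and the threshold computation $M(1-2\varepsilon)\leq 2M_\ell$; the paper's fixed choice $3M_\ell$ achieves the same separation in one step. Both arguments handle the degenerate case $\ell\equiv 0$ correctly (the paper by replacing $M_\ell$ with any positive constant, you by taking any $M>0$), and your reduction to the $\Z^n$-translation-invariant lifts $\pi^{-1}\bigp{\clo{\pi(D^i)}}$ matches the paper's.
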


To prove this result, we will need the following technical lemmas, which give an equivalent
characterization of the dominions.
In their statement, we denote by $K_\varepsilon$ the set of points whose distance to
a subset $K \subset \R^n$ is not greater than $\varepsilon > 0$, i.e.,
\[
  K_\varepsilon := \{x \in \R^n \mid \dist{K}{x} \leq \varepsilon \} .
\]
Also, we denote by $\indic{K}$ the indicator function of $K$, defined by $\indic{K}(x) = 1$
if $x \in K$ and $\indic{K}(x) = 0$ if $x \notin K$.
Let us further recall the following standard estimate on the trajectories
of~\labelcref{eq:controlled-system} (where $\supnorm{f} = M_f$):
\begin{equation}
  \label{eq:trajectories-estimate-i}
  \Norm{\X[x,a,b]{t} - x} \leq M_{f} \, t
\end{equation}
for all $x,y \in \R^n$, $a \in \Acal$, $b \in \Bcal$ and $t \geq 0$.

\begin{lemma}
  \label{lem:dominion-characterization-i}
  A nonempty closed set $D \subset \R^n$ is a dominion of player~1
  in $\Gamma^-$ if and only if for some (hence all) $\delta > 0$,
  \[
    \forall x \in D , \quad \forall \varepsilon > 0 , \quad
    \sup_{\alpha \in \NASmin} \inf_{b \in \Bcal} \delta \int_0^{\infty} e^{-\delta s}
    \indic{D_{\varepsilon}} \big( \X[x,\alpha,b]{s} \big) ds = 1 .
  \]
\end{lemma}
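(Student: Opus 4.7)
The plan is to prove both implications for an arbitrary fixed $\delta > 0$; the \emph{some (hence all)} clause then follows automatically, since the intermediate property (the dominion condition) is $\delta$-independent. Both directions rest on the simple observation that the discounted integral of the indicator is always at most $1$, and the gap $1 - \delta \int_0^\infty e^{-\delta s} \indic{D_\varepsilon}(\X[x,\alpha,b]{s}) ds$ equals the $\delta e^{-\delta s}$-weighted measure of the set of times at which $\X[x,\alpha,b]{s} \notin D_\varepsilon$.

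For the $(\Rightarrow)$ direction, fix $x \in D$, $\varepsilon > 0$, and $\eta > 0$, and choose $T \geq 0$ with $e^{-\delta T} \leq \eta$. The dominion property provides $\alpha \in \NASmin$ such that $\X[x,\alpha,b]{t} \in D_\varepsilon$ for every $b \in \Bcal$ and every $t \in [0,T]$. Hence
\[
  \delta \int_0^{\infty} e^{-\delta s} \indic{D_\varepsilon} \bigp{ \X[x,\alpha,b]{s} } ds
  \geq \delta \int_0^{T} e^{-\delta s} ds = 1 - e^{-\delta T} \geq 1 - \eta ,
\]
uniformly in $b \in \Bcal$. Taking $\inf_b$, then $\sup_\alpha$, and letting $\eta \to 0$ gives $\sup_\alpha \inf_b \geq 1$; the reverse inequality is trivial.

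For the $(\Leftarrow)$ direction, one may assume $M_f > 0$, since otherwise trajectories are constant and the dominion property holds tautologically for any $x \in D$. Fix $x \in D$, $\varepsilon > 0$, $T \geq 0$, and set $\Delta = \varepsilon / (2 M_f)$. Using the speed estimate~\labelcref{eq:trajectories-estimate-i}, if $\X[x,\alpha,b]{t} \notin D_\varepsilon$ for some $t \in [0,T]$, then for every $s \in I_t := [\max(0, t - \Delta), t + \Delta]$ one has $\dist{D}{\X[x,\alpha,b]{s}} > \varepsilon - M_f |s - t| \geq \varepsilon / 2$, so the integrand $\indic{D_{\varepsilon/2}}(\X[x,\alpha,b]{s})$ vanishes on $I_t$, an interval of length at least $\Delta$ on which $e^{-\delta s} \geq e^{-\delta (T + \Delta)}$. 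With $c := \delta \Delta e^{-\delta (T + \Delta)} > 0$, this yields
\[
  \delta \int_0^{\infty} e^{-\delta s} \indic{D_{\varepsilon/2}} \bigp{ \X[x,\alpha,b]{s} } ds \leq 1 - c .
\]
Applying the hypothesis with $\varepsilon / 2$ in place of $\varepsilon$ and any $\eta \in (0, c)$ produces $\alpha \in \NASmin$ with $\inf_b$ of the integral at least $1 - \eta > 1 - c$. The contrapositive of the previous estimate then forces $\X[x,\alpha,b]{t} \in D_\varepsilon$ for every $b \in \Bcal$ and every $t \in [0,T]$, which is precisely the dominion property.

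The main obstacle is the $(\Leftarrow)$ direction: one must convert an average-occupation statement into a pointwise-in-time, uniform-in-$b$ invariance on $[0,T]$. The device that makes this work is to invoke the hypothesis at the strictly smaller radius $\varepsilon/2$: the Lipschitz-type bound on trajectories then guarantees that a single time at which the trajectory escapes $D_\varepsilon$ spawns an entire interval of escapes from $D_{\varepsilon/2}$, producing a uniform deficit $c > 0$ in the discounted integral that contradicts the $\sup_\alpha \inf_b = 1$ assumption.
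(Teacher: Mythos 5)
Your proof is correct and follows essentially the same route as the paper: the forward direction is the standard $1-e^{-\delta T}$ lower bound, and the reverse direction uses the speed estimate \labelcref{eq:trajectories-estimate-i} to turn a single escape time into an interval of escapes at a smaller radius, yielding a uniform deficit in the discounted integral. The only differences are cosmetic — you argue the converse directly at radii $\varepsilon$ and $\varepsilon/2$ where the paper argues contrapositively at radii $2\varepsilon$ and $\varepsilon$ and computes the deficit exactly as $2e^{-\delta T}\sinh(\delta\varepsilon/M_f)$ rather than bounding it below.
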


\begin{proof}
  We first assume that $D$ is a dominion of player~1 and fix some discount
  factor $\delta > 0$.
  Let $x \in D$ and $\varepsilon > 0$.
  For any horizon $T > 0$, there is a strategy $\bar{\alpha} \in \NASmin$ of
  player~1 such that,
  for all controls $b \in \Bcal$ of player~2 and all times $t \in [0,T]$,
  $\X[x,\bar{\alpha},b]{t} \in D_{\varepsilon}$.
  So, for all $b \in \Bcal$ we have
  \[
    \delta \int_0^{\infty} e^{-\delta s}
    \indic{D_\varepsilon} \big(\X[x,\bar{\alpha},b]{s} \big) ds \geq
    \delta \int_0^T e^{-\delta s} ds = 1 - e^{-\delta T} .
  \]
  Hence we get
  \[
    \sup_{\alpha \in \NASmin} \inf_{b \in \Bcal} \delta \int_0^{\infty} e^{-\delta s}
    \indic{D_{\varepsilon}} \big( \X[x,\alpha,b]{s} \big) ds \geq 1 - e^{-\delta T}
  \]
  for all $T > 0$.
  Taking the limit as $T$ goes to $+\infty$, and since the integral is bounded
  above by $1$, we finally get that
  \[
    \sup_{\alpha \in \NASmin} \inf_{b \in \Bcal} \delta \int_0^{\infty} e^{-\delta s}
    \indic{D_{\varepsilon}} \big( \X[x,\alpha,b]{s} \big) ds = 1 .
  \]

  We now assume that $D$ is not a dominion of player~1.
  Since it is nonempty, it means that there exist some $\bar{x} \in D$, $\varepsilon > 0$
  and $T > 0$ such that for all strategies $\alpha$ of player~1, player~2 can choose
  a control $b$ for which $\X[\bar{x},\alpha,b]{t} \notin D_{2 \varepsilon}$
  at some $t \in [0,T]$.
  Using the estimate~\labelcref{eq:trajectories-estimate-i} we deduce that
  \[
    \forall s \in \left[ t-\frac{\varepsilon}{M_f}, t+\frac{\varepsilon}{M_f} \right] ,
    \quad \Norm{\X[\bar{x},\alpha,b]{s} - \X[\bar{x},\alpha,b]{t}} \leq \varepsilon ,
  \]
  hence $\X[\bar{x},\alpha,b]{s} \notin D_\varepsilon$.
  Note that, since $\X[\bar{x},\alpha,b]{t} \notin D_{2 \varepsilon}$, the estimate
  \labelcref{eq:trajectories-estimate-i} necessarily implies
  $t-\frac{\varepsilon}{M_f} > 0$.

  For any $\delta > 0$ we then have
  \begin{align*}
    \delta \int_0^{\infty} e^{-\delta s} \indic{D_\varepsilon}
    \big( \X[\bar{x},\alpha,b]{s} \big) ds
    & \leq \delta \int_0^{t-\frac{\varepsilon}{M_f}} e^{-\delta s} ds
    + \delta \int_{t+\frac{\varepsilon}{M_f}}^{\infty} e^{-\delta s} ds \\
    & = 1 - 2 e^{-\delta t} \sinh \left( \frac{\delta \varepsilon}{M_f} \right) \\
    & \leq 1 - 2 e^{-\delta T} \sinh \left( \frac{\delta \varepsilon}{M_f} \right) .
  \end{align*}
  Thus
  \[
    \sup_{\alpha \in \NASmin} \inf_{b \in \Bcal} \delta \int_0^{\infty} e^{-\delta s}
    \indic{D_{\varepsilon}} \big( \X[\bar{x},\alpha,b]{s} \big) ds
    \leq 1 - 2 e^{-\delta T} \sinh \left( \frac{\delta \varepsilon}{M_f} \right) < 1 ,
  \]
  which concludes the proof.
\end{proof}

With a minor adaptation of the proof, which we leave to the reader,
we can show a dual characterization of dominions for the second player.

\begin{lemma}
  \label{lem:dominion-characterization-ii}
  A nonempty closed set $D \subset \R^n$ is a dominion of player~2
  in $\Gamma^-$ if and only if for some (hence all) $\delta > 0$,
  \begin{flalign*}
    \phantom{\qed} &&
    \forall x \in D , \quad \forall \varepsilon > 0 , \quad
    \inf_{\alpha \in \NASmin} \sup_{b \in \Bcal} \delta \int_0^{\infty} e^{-\delta s}
    \indic{D_{\varepsilon}} \big( \X[x,\alpha,b]{s} \big) ds = 1 . &&
    \qed
  \end{flalign*}
\end{lemma}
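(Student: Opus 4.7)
The plan is to mirror the proof of \Cref{lem:dominion-characterization-i}, swapping the roles of infimum and supremum to reflect the position of player~2 in the lower game. The argument splits naturally into the direct implication and the contrapositive.

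For the direct part, I would fix $\delta > 0$, $x \in D$, $\varepsilon > 0$, and a horizon $T > 0$. Applying the definition of a dominion of player~2 at $x$ produces, for each $\alpha \in \NASmin$, a control $b_\alpha \in \Bcal$ keeping the trajectory $\X[x,\alpha,b_\alpha]{t}$ inside $D_\varepsilon$ throughout $[0,T]$. Evaluating the integrand at $b_\alpha$ then gives
\[
  \sup_{b \in \Bcal} \delta \int_0^\infty e^{-\delta s} \indic{D_\varepsilon}\big(\X[x,\alpha,b]{s}\big) \, ds \geq \delta \int_0^T e^{-\delta s} \, ds = 1 - e^{-\delta T} ,
\]
and this lower bound is uniform in $\alpha$, so it survives passage to the infimum. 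Letting $T \to \infty$ and using the trivial upper bound by $1$ yields the asserted equality.

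For the converse I argue by contrapositive. If $D$ is not a dominion of player~2, there exist $\bar{x} \in D$, $\varepsilon > 0$, $T > 0$, and a strategy $\bar{\alpha} \in \NASmin$ such that for every $b \in \Bcal$ the trajectory $\X[\bar{x},\bar{\alpha},b]{\argt}$ leaves $D_{2\varepsilon}$ at some time $t_b \in [0,T]$. The trajectory estimate \labelcref{eq:trajectories-estimate-i} then keeps it outside $D_\varepsilon$ throughout the window $[t_b - \varepsilon/M_f, t_b + \varepsilon/M_f]$ (with $t_b \geq \varepsilon/M_f$ since $\bar{x} \in D$), and the same arithmetic as in the proof of \Cref{lem:dominion-characterization-i} gives, uniformly in $b$,
\[
  \delta \int_0^\infty e^{-\delta s} \indic{D_\varepsilon}\big(\X[\bar{x},\bar{\alpha},b]{s}\big) \, ds \leq 1 - 2 e^{-\delta T} \sinh \! \left( \frac{\delta \varepsilon}{M_f} \right) .
\]
Taking the supremum over $b$ retains this strict upper bound, and bounding $\inf_{\alpha \in \NASmin} \sup_{b \in \Bcal}$ from above by the value at $\bar{\alpha}$ delivers the strict inequality $< 1$.

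I do not anticipate a real obstacle, since the argument is formally dual to the one already in the text. The only subtle point is quantifier ordering: in the forward direction the witness $b_\alpha$ is allowed to depend on $\alpha$, which is precisely what placing $\sup_b$ after the choice of $\alpha$ permits; in the converse, one uses the single strategy $\bar{\alpha}$ to bound $\inf_\alpha$ from above, but must keep the estimate uniform in $b$ in order to then bound $\sup_b$.
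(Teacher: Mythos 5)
Your proof is correct and is exactly the dualization of the proof of \cref{lem:dominion-characterization-i} that the paper leaves to the reader: the witness control $b_\alpha$ depending on $\alpha$ in the forward direction, and the single strategy $\bar{\alpha}$ with a bound uniform in $b$ in the contrapositive, are precisely the right quantifier swaps, and the $\sinh$ estimate carries over verbatim.
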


\begin{remark}
  We can also give a similar characterization of dominions replacing the discounted averages
  with the time averages
  \[
    \frac{1}{T} \int_0^T \indic{D_{\varepsilon}} \big( \X[x,\alpha,b]{s} \big) ds .
  \]
\end{remark}

We can now give the proof of the necessary condition for unique ergodicity.

\begin{proof}[Proof of \Cref{prop:unique-ergodicity-only-if}]
  We prove the contrapositive and, to this end, we suppose that there exist in $\R^n$
  a dominion of player~1, denoted $D^1$, and a dominion of player~2, denoted $D^2$,
  such that $\clo{\pi(D^1)} \cap \clo{\pi(D^2)} = \emptyset$.
  Since the sets $\pi^{-1} \bigp{ \clo{\pi (D^{1 \backslash 2})} }$ are also dominions
  in $\R^n$, we can assume without loss of generality that $D^{1 \backslash 2}$ are
  $\Z^n$-translation-invariant and that $D^1 \cap D^2 = \emptyset$.
  So we can find $\varepsilon > 0$ such that $D^1_\varepsilon$ and $D^2_\varepsilon$ also
  have an empty intersection (recall that
  $D^{1 \backslash 2}_\varepsilon =
  \{ x \in \R^n \mid \dist{D^{1 \backslash 2}}{x} \leq \varepsilon \}$).
  We then consider any function $g \in \Cper{\R^n}$ satisfying 
  \begin{equation}
    \label{eq:perturbation}
    \begin{cases}
    g(x) = 0 , \quad & \forall x \in D^1_\varepsilon , \\
    g(x) = 3 M_\ell , \quad & \forall x \in D^2_\varepsilon , \\
    0 \leq g(x) \leq 3 M_\ell , \quad & \text{otherwise} ,
    \end{cases}
  \end{equation}
  where $M_\ell$ equals $\supnorm{\ell}$ if $\ell \neq 0$ and any positive real otherwise.
  Thus, the function $g$ satisfies, for all $x \in \R^n$,
  \[
    3 M_\ell \indic{D^2_\varepsilon}(x) \leq g(x) \leq
    3 M_\ell (1 - \indic{D^1_\varepsilon}(x) ) .
  \]
  
  Let $\delta > 0$ be any discount factor.
  From the above inequalities, we deduce that for all $x \in \R^n$,
  all strategies $\alpha$ of player~1 and all controls $b$ of player~2,
  \begin{multline*}
    - M_\ell + \delta \int_0^{\infty} 3 M_\ell e^{-\delta s}
    \indic{D^2_{\varepsilon}} \big( \X[x,\alpha,b]{s} \big) ds \\
     \leq \delta \int_0^{\infty} e^{-\delta s}
    \Big( \ell(\X[x,\alpha,b]{s},\alpha[b]_s,b_s) + g(\X[x,\alpha,b]{s}) \Big) ds \\
    \leq M_\ell + \delta \int_0^{\infty} 3 M_\ell e^{-\delta s}
    \big(1 - \indic{D^1_{\varepsilon}} \big( \X[x,\alpha,b]{s} \big) \big) ds .
  \end{multline*}
  Let us denote by $v_\delta^{g}$ the unnormalized value of the discounted game
  with the perturbed running payoff $(x,a,b) \mapsto \ell(x,a,b) + g(x)$.
  Taking the supremum over $b \in \Bcal$ and then the infimum over $\alpha \in \NASmin$
  in the latter inequalities, we deduce from \Cref{lem:dominion-characterization-i} that
  $\delta v_\delta^g(x) \leq M_\ell$ for all $x \in D^1$, and from
  \Cref{lem:dominion-characterization-ii} that $2 M_\ell \leq \delta v_\delta^g(y)$
  for all $y \in D^2$.
  Thus, if $x \in D^1$ and $y \in D^2$, we have
  \[
    \liminf_{\delta \to 0} \big( \delta v_\delta^g(y) - \delta v_\delta^g(x) \big) \geq M_\ell > 0
  \]
  which proves that the perturbed game is not ergodic, hence that the game $\Gamma^-$ is not
  uniquely ergodic. 
\end{proof}

\begin{remark}[Comparison with one-player controlled systems]
  It is instructive to compare the latter necessary condition of unique ergodicity with
  the result of Arisawa in \cite{Ari97}, which deals with optimal control problems, i.e.,
  problems for systems controlled by one player (who is minimizing and which we call player~1).
  In this paper, she proved that if the controlled system is uniquely ergodic, then
  there exists an {\em ergodic attractor} $D$ which satisfies the following properties.
  \begin{enumerate}
    \item[(P)] $D$ is closed, connected and positively invariant.
    \item[(D)] $D$ is nonempty and $y \in D$ if and only if for any $x \in \R^n$ and
      any $\varepsilon > 0$, there exists $T_\varepsilon > 0$ and $a_\varepsilon \in \Acal$
      such that $\lim_{\varepsilon \to 0} T_\varepsilon = +\infty$ and
      $\Norm{y -\X[x,a_{\varepsilon}]{T_\varepsilon}} < \varepsilon$.
    \item[(A)] $D$ has the following time-averaged attracting property:
      for any neighborhood $U$ of $D$ and any $x \in \R^n$,
      \[
        \lim_{\delta \to 0} \delta \int_0^\infty e^{-\delta s} \indic{U}(\X[x,a]{s}) ds = 1 ,
        \qquad \text{uniformly in $a \in \Acal$.}
      \]
  \end{enumerate}

  For such controlled systems, if we introduce a second player as a dummy to cast the problem
  within the framework of two-player differential games, then it readily follows from
  the definition that the dominions of player~2 correspond to the nonempty closed and
  positively invariant sets (indeed, every positive orbit through any point in a dominion of
  player~2 is within any $\varepsilon$-neighborhood of the dominion
  for any arbitrary period of time).
  Let us observe that these sets are also dominions of player~1 and that the intersection of
  two dominions of player~2, if nonempty, is another dominion of player~2.

  Then, applying \Cref{prop:unique-ergodicity-only-if}, we deduce that
  if unique ergodicity holds, there is a unique minimal nonempty closed positively invariant set
  in the torus and that this set intersect every dominion of player~1 in the torus.
  We claim that this set is the ergodic attractor $D$ described in \cite{Ari97} and that
  the two results are equivalent.
  Indeed it follows from the properties (P) and (D) that the ergodic attractor $D$ is
  the unique minimal dominion of player~2 (the uniqueness comes from the connectedness
  in (P) and the minimality from (D)) and property (A) implies that any dominion of
  player~1 cannot be disjoint from $D$.
  Conversely, if $D$ is the unique minimal dominion of player~2 whose existence stems
  from \Cref{prop:unique-ergodicity-only-if}, then property (P) is readily verified.
  Furthermore, its minimality implies that any point $x \in D$ is approximately
  controllable to any other point $y \in D$.
  Then, since every dominion of player~1 meets $D$, and particularly the closure of
  any positive orbit, we can show that property (D) holds.
  Finally using (P) and (D) we can then prove that (A) holds, as is done in \cite{Ari97}.
\end{remark}

\subsection{Sufficient condition for unique ergodicity}

In this subsection, we give a sufficient condition of unique ergodicity which is derived
from \Cref{thm:unique-ergodicity-Hamiltonians}.
We start with a lemma that relates the solutions of \labelcref{eq:HJ-PDE-homogeneous}
to dominions in $\Gamma^-$.

\begin{lemma}
  \label{lem:argmin-max}
  Let $w$ be any continuous viscosity solution of \labelcref{eq:HJ-PDE-homogeneous}.
  Then $\argmin w$ is a dominion of player~1 in $\Gamma^-$ and $\argmax w$ is
  a dominion of player~2.
\end{lemma}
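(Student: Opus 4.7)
The plan is to use the discounted characterizations of dominions provided by \Cref{lem:dominion-characterization-i,lem:dominion-characterization-ii}. The central step is to establish the following identity: for every $\delta > 0$ and every $x \in \R^n$,
\[
  w(x) = \inf_{\alpha \in \NASmin} \sup_{b \in \Bcal} \delta \int_0^{\infty} e^{-\delta s}
  w\big(\X[x,\alpha,b]{s}\big) \, ds.
\]
Once this is in place, the conclusion is essentially a direct estimation, keeping in mind that $\argmin w$ and $\argmax w$ are nonempty closed subsets of $\R^n$ since $w$ is continuous and $\Z^n$-periodic, hence attains its extrema on the compact torus.

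To prove the identity, I would first check that $u := w/\delta$ is a continuous $\Z^n$-periodic viscosity solution of
\[
  \delta u(x) + H_\infty(x, Du(x)) - w(x) = 0 \quad \text{in } \R^n .
\]
This is where the positive homogeneity of $H_\infty$ intervenes. If $\psi \in C^1$ is such that $u - \psi$ has a local maximum at $x$, then $w - \delta \psi$ has a local maximum at $x$, so by the subsolution property of $w$ for \labelcref{eq:HJ-PDE-homogeneous}, $H_\infty(x, \delta D\psi(x)) \leq 0$; dividing by $\delta > 0$ yields $H_\infty(x, D\psi(x)) \leq 0$, and since $\delta u(x) = w(x)$, the subsolution inequality for the displayed PDE holds. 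The supersolution case is analogous. Now, the above PDE is precisely the Hamilton--Jacobi--Isaacs equation associated with the lower discounted game with dynamics $f$ and running payoff $w$ (depending only on the state variable). This auxiliary game satisfies \Cref{asm:standing-assumption}, and $H_\infty$ inherits the regularity needed for the comparison principle from the Lipschitz continuity of $f$. Therefore, by \Cref{thm:HJ-PDE-characterization}, the unique continuous viscosity solution coincides with the value function of the auxiliary game, namely $x \mapsto \inf_\alpha \sup_b \int_0^{\infty} e^{-\delta s} w(\X[x,\alpha,b]{s}) \, ds$. Uniqueness identifies this with $w/\delta$, establishing the identity.

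To conclude for $\argmin w$, fix $x \in \argmin w$, set $c_0 := \min w$, and let $\varepsilon > 0$. By continuity of $w$ and compactness of the torus,
\[
  \eta := \inf \big\{ w(y) - c_0 \mid y \in \R^n , \, \dist{\argmin w}{y} > \varepsilon \big\} > 0 ,
\]
so that $w(y) \geq c_0 + \eta \, \indic{(\argmin w)_\varepsilon^c}(y)$ for all $y$. Given $\mu > 0$, the identity produces $\alpha \in \NASmin$ with
$\sup_b \delta \int_0^{\infty} e^{-\delta s} w(\X[x,\alpha,b]{s}) \, ds \leq c_0 + \mu$; combining with the pointwise lower bound on $w$ and rearranging yields, for every $b \in \Bcal$,
\[
  \delta \int_0^{\infty} e^{-\delta s}
  \indic{(\argmin w)_\varepsilon}\big(\X[x,\alpha,b]{s}\big) \, ds
  \geq 1 - \mu/\eta .
\]
Taking the supremum over $\alpha$, then letting $\mu \to 0$, gives the condition of \Cref{lem:dominion-characterization-i}, so $\argmin w$ is a dominion of player~1. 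The argument for $\argmax w$ is symmetric: at $y \in \argmax w$ the identity implies that for every $\alpha \in \NASmin$ there exists $b \in \Bcal$ with $\delta \int_0^{\infty} e^{-\delta s} w(\X[y,\alpha,b]{s}) \, ds \geq \max w - \mu$, from which \Cref{lem:dominion-characterization-ii} yields that $\argmax w$ is a dominion of player~2.

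The main technical point is the homogeneity-based transfer from $w$ being a solution of $H_\infty(\cdot,D\cdot) = 0$ to $w/\delta$ being a solution of the auxiliary discounted PDE; once this identification is in place, the probabilistic estimate and the application of the dominion characterization lemmas are routine.
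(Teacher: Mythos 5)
Your proposal is correct and follows essentially the same route as the paper: both identify $w/\delta$ as the value of the auxiliary discounted game with running payoff $w$ (using the positive homogeneity of $H_\infty$ and uniqueness from \cref{thm:HJ-PDE-characterization}), and then feed the lower bound $w \geq \min w + \eta\,(1-\indic{D_\varepsilon})$ into \cref{lem:dominion-characterization-i,lem:dominion-characterization-ii}. The only differences are presentational (you spell out the viscosity verification that the paper asserts, and phrase the final estimate via a near-optimal strategy and $\mu \to 0$ rather than plugging the inequality directly into the representation formula).
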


\begin{proof}
  Let us first consider the differential game with the same definition as $\Gamma^-$
  except for the payoff function $\ell$ which is replaced with $w$.
  The Hamiltonian associated to this game is $H_\infty(x,p)-w(x)$ and since, for
  any $\delta > 0$, the function $\delta^{-1} w$ is solution to
  \labelcref{eq:HJ-PDE-stationary} with the latter Hamiltonian, we deduce from
  \Cref{thm:HJ-PDE-characterization} that it is the (unnormalized) value of
  the infinite-horizon discounted game.
  Thus, for all points $x$ in $\R^n$ and all positive factors $\delta$ we have
  \begin{equation}
    \label{eq:argmin-max}
    \delta^{-1} w(x) = \inf_{\alpha \in \NASmin} \sup_{b \in \Bcal}
    \int_0^{\infty} e^{-\delta s} w(\X[x,\alpha,b]{s}) ds .
  \end{equation}

  Now set $D = \argmin w$ and let us assume, without loss of generality, that $\min w = 0$.
  Also, since the case with $w$ constant is trivial, we can assume that $D \neq \R^n$.
  In view of \Cref{lem:dominion-characterization-i}, we fix arbitrary positive constants
  $\varepsilon$ and $\delta$.
  Again, if $D_\varepsilon = \{x \in \R^n \mid \dist{D}{x} \leq \varepsilon\}$ is
  the whole space $\R^n$, then the equality in \Cref{lem:dominion-characterization-i}
  trivially holds, so we assume that $\varepsilon$ is small enough
  so that $D_\varepsilon \neq \R^n$.
  Then, denoting by $m_\varepsilon$ the infimum of $w$ on the complement of
  $D_\varepsilon$, which is necessarily positive, we can write
  \[
    w(x) \geq m_\varepsilon \left( 1 - \indic{D_\varepsilon}(x) \right)
  \]
  for all $x \in \R^n$.
  By plugging this inequality into the right-hand side of \labelcref{eq:argmin-max},
  we obtain for all $x \in \R^n$
  \[
    w(x) \geq \inf_{\alpha \in \NASmin} \sup_{b \in \Bcal} \delta \int_0^{\infty}
    e^{-\delta s} m_\varepsilon
    \left( 1 - \indic{D_\varepsilon}(\X[x,\alpha,b]{s}) \right) ds .
  \]
  After simplification, this yields, for all $x \in D$,
  \[
    0 \geq 1 - \sup_{\alpha \in \NASmin} \inf_{b \in \Bcal} \delta \int_0^{\infty}
    e^{-\delta s} \indic{D_\varepsilon}(\X[x,\alpha,b]{s}) ds .
  \]
  Since the converse inequality is obviously true, we deduce that there is in fact equality
  and thus, by \Cref{lem:dominion-characterization-i}, that $D$ is a dominion of player~1.

  With very similar arguments and using \Cref{lem:dominion-characterization-ii}
  instead of \Cref{lem:dominion-characterization-i}, we can show that
  $\argmax w$ is a dominion of player~2.
\end{proof}

We know that if the value function $\delta v_\delta$ converges uniformly to some
function $v$ then it is solution to \labelcref{eq:HJ-PDE-homogeneous}.
This entails the following corollary

\begin{corollary}
  Assume that the value function $\delta v_{\delta}$ of the game $\Gamma^-$ converges
  uniformly to some function $v$.
  Then $\argmin v$ and $\argmax v$ are dominions of player~1 and player~2, respectively.
\end{corollary}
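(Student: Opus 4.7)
The plan is to apply \Cref{lem:argmin-max} to the limit function $v$, so the work reduces to showing that $v$ is a continuous viscosity solution of \labelcref{eq:HJ-PDE-homogeneous}. This is essentially the same passage to the limit carried out in the last part of the proof of \Cref{thm:unique-ergodicity-Hamiltonians}, specialised to $g \equiv 0$.

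First I would rescale: set $V_\delta(x) := \delta v_\delta(x)$. By \Cref{thm:HJ-PDE-characterization}, $v_\delta$ is a viscosity solution of \labelcref{eq:HJ-PDE-stationary} with Hamiltonian $H$, and multiplying this equation by $\delta > 0$ (which preserves the viscosity sense) and substituting $Dv_\delta = \delta^{-1} D V_\delta$ yields that $V_\delta$ solves, in the viscosity sense on $\R^n$,
\[
  \delta\, V_\delta(x) + \delta\, H\bigl(x, \delta^{-1} D V_\delta(x)\bigr) = 0,
\]
with $V_\delta$ being $\Z^n$-periodic.

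Next I would pass to the limit $\delta \to 0$. By hypothesis, $V_\delta \to v$ uniformly on $\R^n$, and in particular $v$ is continuous and $\Z^n$-periodic. Since $\{V_\delta\}$ is uniformly bounded (by $M_\ell$), the zeroth-order term $\delta V_\delta$ converges uniformly to $0$. For the Hamiltonian part, \Cref{itm:Hamiltonian-recession} of \Cref{asm:Hamiltonian} (which applies to $H$ by \labelcref{eq:homogeneous-Hamiltonian}) gives
\[
  \bigl| \delta\, H(x, \delta^{-1} p) - H_\infty(x,p) \bigr| \leq \delta\, M_H
\]
for all $x, p \in \R^n$, so $(x,p) \mapsto \delta H(x, \delta^{-1} p)$ converges to $H_\infty(x,p)$ uniformly on $\R^n \times \R^n$. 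By the standard stability property of viscosity solutions, the uniform limit $v$ is a continuous viscosity solution of
\[
  H_\infty(x, Dv(x)) = 0 \quad \text{in } \R^n,
\]
i.e.\ of \labelcref{eq:HJ-PDE-homogeneous}.

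Finally, \Cref{lem:argmin-max} applied to $w = v$ immediately gives that $\argmin v$ is a dominion of player~1 and $\argmax v$ is a dominion of player~2 in $\Gamma^-$. The only delicate point is the stability step, but the uniformity of the convergence $\delta H(x, \delta^{-1} p) \to H_\infty(x,p)$ (in both variables) makes it entirely routine; every other step is either a rescaling or a direct quotation of an earlier result.
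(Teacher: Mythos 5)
Your proposal is correct and follows the same route the paper intends: the corollary is stated as an immediate consequence of the fact that the uniform limit $v$ solves \labelcref{eq:HJ-PDE-homogeneous} (a fact the paper asserts without detail, relying on the same rescaling-and-stability argument already carried out in the proof of \Cref{thm:unique-ergodicity-Hamiltonians}), followed by an application of \Cref{lem:argmin-max}. You have merely written out the limit passage explicitly, and your justification via \Cref{itm:Hamiltonian-recession} and positive homogeneity is sound.
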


A straightforward consequence is that if $\argmin v$ and $\argmax v$ have a nonempty
intersection, then $v$ is constant and the game is ergodic.
We can extend this result to unique ergodicity with the help of
\Cref{thm:unique-ergodicity-Hamiltonians} and thus provide a converse
to \Cref{prop:unique-ergodicity-only-if}.

\begin{theorem}
  \label{thm:unique-ergodicity-PDE-approach}
  Assume that in the differential game $\Gamma^-$, the intersection of every dominion
  of player~1 with every dominion of player~2 in the torus is nonempty.
  Then the strong maximum principle (see \Cref{thm:unique-ergodicity-Hamiltonians}) holds,
  i.e., the constant functions are the only solutions of \labelcref{eq:HJ-PDE-homogeneous}.

  If, moreover, the structural equicontinuity property is true, then $\Gamma^-$ is uniquely
  ergodic if and only if the two players do not have disjoint dominions in the torus.
\end{theorem}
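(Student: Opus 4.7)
The plan is to derive the strong maximum principle directly from \Cref{lem:argmin-max} combined with periodicity, and then combine it with structural equicontinuity via \Cref{prop:ergodicity-Hamiltonians}. The reverse implication in the second statement is already \Cref{prop:unique-ergodicity-only-if}, so only the strong maximum principle and one implication of the second statement require work.

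For the strong maximum principle, I would fix an arbitrary continuous viscosity solution $w$ of \labelcref{eq:HJ-PDE-homogeneous}. By \Cref{lem:argmin-max}, $D^1 := \argmin w$ is a dominion of player~1 and $D^2 := \argmax w$ is a dominion of player~2 in $\R^n$. Continuity and $\Z^n$-periodicity of $w$ force both $D^1$ and $D^2$ to be closed and $\Z^n$-translation-invariant, from which a short topological argument shows that $\pi(D^i)$ is already closed in $\torus{n}$ and satisfies $\clo{\pi(D^i)} = \pi(D^i)$. Hence these are genuine dominions in the torus in the sense of the paper. The assumption that every pair of dominions meets in $\torus{n}$ then furnishes $x \in D^1$ and $y \in D^2$ with $x - y \in \Z^n$, and periodicity yields $w(x) = w(y)$. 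Thus $\min w = \max w$ and $w$ is constant, giving the strong maximum principle.

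For the ``if'' direction of the second statement, let $g \in \Cper{\R^n}$ be arbitrary. Since $g$ is bounded, periodic and continuous, the perturbed Hamiltonian $g+H$ still satisfies \Cref{asm:Hamiltonian}, and a direct computation of the limit $\lim_{\nu \to +\infty} (g(x) + H(x,\nu p))/\nu$ shows that its recession function is again $H_\infty$. Consequently the strong maximum principle just established applies verbatim to $g+H$. Combined with the equicontinuity of $\{\delta u_\delta\}_{0<\delta \leq 1}$ associated with $g+H$, which is granted by the structural equicontinuity hypothesis, \Cref{prop:ergodicity-Hamiltonians} gives ergodicity of $g+H$. Since $g$ was arbitrary, $H$, and hence $\Gamma^-$, is uniquely ergodic.

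The one delicate point I anticipate is the bookkeeping in the passage to the torus, namely verifying that $\pi(\argmin w)$ and $\pi(\argmax w)$ really qualify as ``dominions in the torus'' as defined in the paper. This rests on the $\Z^n$-invariance of the argmin and argmax sets (a direct consequence of $\Z^n$-periodicity of $w$) and is precisely the quotient subtlety highlighted right after \Cref{def:dominions}; but it only requires a routine topological check rather than any substantive new idea, so I expect no deeper obstacle.
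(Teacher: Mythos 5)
Your proposal is correct and follows essentially the same route as the paper: apply \Cref{lem:argmin-max} to $\argmin w$ and $\argmax w$, use periodicity of $w$ to see these project to closed sets in the torus so that the dominion-intersection hypothesis forces $\min w = \max w$, and then invoke \Cref{prop:unique-ergodicity-only-if} for one direction and the equicontinuity-plus-strong-maximum-principle machinery for the other. The only cosmetic difference is that you conclude via \Cref{prop:ergodicity-Hamiltonians} applied to each perturbed Hamiltonian $g+H$ (noting its recession function is still $H_\infty$), whereas the paper cites \Cref{thm:unique-ergodicity-Hamiltonians} directly; these are interchangeable.
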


\begin{proof}
  Let $w$ be any solution of \labelcref{eq:HJ-PDE-homogeneous}.
  Let $D^1 = \argmin w$ and $D^2 = \argmax w$.
  Since $w$ is $\Z^n$-periodic and continuous, it passes to the quotient into a continuous map
  on the torus whose minimum (resp., maximum) is attained on $\pi(D^1)$ (resp., $\pi(D^2)$).
  Hence, $\pi(D^{1 \backslash 2})$ are necessarily closed and we have
  $D^{1 \backslash 2} = \pi^{-1} \bigp{ \clo{\pi (D^{1 \backslash 2})} }$.
  Using now \Cref{lem:argmin-max}, we deduce that $\clo{\pi(D^1)} \cap \clo{\pi(D^2)}$
  hence $D^1 \cap D^2$ is nonempty.
  So $w$ is constant.

  The rest of the proof follows from \Cref{prop:unique-ergodicity-only-if} and
  \Cref{thm:unique-ergodicity-Hamiltonians}.
\end{proof}

Note that if the controlled system~\labelcref{eq:controlled-system} is Lipschitz
continuous, meaning that there is a positive constant $L$ for which
\[
  \Norm{\X[x,a,b]{t} - \X[y,a,b]{t}} \leq L \Norm{x-y} , \quad
  \forall x, y \in \R^n , \enspace \forall a \in \Acal , \enspace \forall b \in \Bcal ,
  \enspace \forall t \geq 0 ,
\]
then the family $\{ \delta v_\delta \}$ is equi-Lipschitz
for any payoff function $\ell$.
In that case we can use the latter theorem to characterize unique ergodicity
in terms of dominions.
This is in particular the case if the function $f$ does not depend on the state variable.

\begin{example}
  Let us go back to the game introduced in \Cref{ex:running-example-1,ex:running-example-2},
  whose dynamics is defined in $\R^2$ by the function
  \[
    f(x,a,b) = \begin{pmatrix} a \\ \gamma b \end{pmatrix} ,
    \quad a,b \in [-1,1] ,
  \]
  with $0 < \gamma \leq 1$ and whose payoff function $\ell$ is any function satisfying
  \Cref{asm:standing-assumption}.
  We already mentioned that the family of value functions
  $\{\delta v_{\delta}\}_{0 < \delta \leq 1}$ is equicontinuous
  (see \Cref{ex:equicontinuity} or the above remark).
  Hence the structural equicontinuity property holds.

  If $\gamma$ is a rational number then, for any $x, y \in \R^2$, the lines 
  \[
    V^1_{1/\gamma} = x + \begin{pmatrix} 1/\gamma \\ 1 \end{pmatrix} \R
    \qquad \text{and} \qquad
    V^2_\gamma = y + \begin{pmatrix} 1 \\ \gamma \end{pmatrix} \R
  \]
  are dominions of player~1 and player~2, respectively, and their quotient images
  in the torus $\torus{2}$ are closed and disjoint for suitable $x$ and $y$.
  Thus, according to \Cref{thm:unique-ergodicity-PDE-approach},
  the game is not uniquely ergodic.

  Assume now that $\gamma$ is not a rational number and consider in $\R^2$
  any dominions $D^1$ and $D^2$ of player~1 and player~2, respectively.
  We next show that their intersection in the torus is not empty.
  Let us fix two points, $x \in D^1$ and $y \in D^2$, in these dominions.
  By definition, given $\varepsilon > 0$ and $T > 0$, player~1 has a strategy
  $\alpha_\varepsilon \in \NASmin$ such that for every action $b \in \Bcal$ of player~2,
  we have $\dist{D^1}{\X[x,\alpha_\varepsilon,b]{t}} \leq \varepsilon$ for all $t \in [0,T]$.
  In particular, if $b$ is the constant control equal to $1$, then we have
  \[
    \begin{pmatrix} -1 \\ \gamma \end{pmatrix}
    \leq \dot{X}_{t}^{x,\alpha_\varepsilon,b} 
    \leq \begin{pmatrix} 1 \\ \gamma \end{pmatrix} ,
    \quad \forall t \geq 0 ,
  \]
  that is, the (continuous) trajectory of the dynamical system has the property that
  $\X[x,\alpha_\varepsilon,b]{t} - \X[x,\alpha_\varepsilon,b]{s}$ is included in the cone
  $C_\gamma^1 = \{ z = (z_1, z_2)^\intercal \in \R^2 \mid -z_2 \leq \gamma z_1 \leq z_2 \}$
  for all $0 \leq s \leq t \leq T$ (see \Cref{fig:orbits}).
  Likewise, with the same $\varepsilon$ and $T$, player~2 has a strategy
  $\beta_\varepsilon \in \NASmax$ such that 
  $\dist{D^2}{\X[y,a,\beta_\varepsilon]{t}} \leq \varepsilon$ for all $t \in [0,T]$ and
  all $a \in \Acal$, and if player~1 chooses the constant control equal to $1$, then we have
  \[
    \begin{pmatrix} 1 \\ -\gamma \end{pmatrix}
    \leq \dot{X}^{y,a,\beta_\varepsilon}_{t} 
    \leq \begin{pmatrix} 1 \\ \gamma \end{pmatrix} ,
    \quad \forall t \geq 0 ,
  \]
  that is, the trajectory of the system is such that
  $\X[y,a, \beta_\varepsilon]{t} - \X[y,a, \beta_\varepsilon]{s}$ is included in the cone
  $C_\gamma^2 = \{ z = (z_1, z_2)^\intercal \in \R^2 \mid -\gamma z_1 \leq z_2 \leq \gamma z_1 \}$
  for all $0 \leq s \leq t \leq T$ (see \Cref{fig:orbits}).
  
  \begin{figure}[ht]
    \centering
    \begin{tikzpicture}
      \begin{axis}[
          axis lines=none,
          xmin=-4,
          xmax=4,
          ymin=-2.5,
          ymax=2.5,
          smooth,
          unit vector ratio=1 1 1,
          scale=1.25,
        ]
        \draw[->, >= angle 60,gray] (-2.25,0) -- (2.25,0);
        \draw[black] (0,0) node[below left] {\footnotesize $0$};
        \draw[black] (2.5,0) node[below left] {\small $x_1$};
        \draw[gray,thick] (1,0) -- (1,-0.05);
        \draw[black] (1,0) node[below right] {\footnotesize $1$};
        \draw[gray,densely dotted] (-2.25,1) -- (2.25,1);
        \draw[gray,densely dotted] (-2.25,2) -- (2.25,2);
        \draw[gray,densely dotted] (-2.25,-1) -- (2.25,-1);
        \draw[->, >= angle 60,gray] (0,-1.75) -- (0,2.25);
        \draw[black] (0,2.25) node[left] {\small $x_2$};
        \draw[gray,thick] (-0.05,1) -- (0,1);
        \draw[black] (0,1) node[above left] {\footnotesize $1$};
        \draw[gray,thick] (-0.05,0.66) -- (0,0.66);
        \draw[black] (0,0.66) node[left] {\footnotesize $\gamma$};
        \draw[gray,densely dotted] (2,-1.75) -- (2,2.15);
        \draw[gray,densely dotted] (1,-1.75) -- (1,2.25);
        \draw[gray,densely dotted] (-1,-1.75) -- (-1,2.25);
        \draw[gray,densely dotted] (-2,-1.75) -- (-2,2.25);
        \draw[black!0,] (4,-1.75) -- (4,2.25);
        \draw[black!0,] (-4,-1.75) -- (-4,2.25);
        \addplot[black,thick,domain=0:2.25] {0.66*x};
        \draw[black] (-1.66,1.5) node {$C_\gamma^1$};
        \addplot[black,thick,domain=-2.25:2.25] {-0.66*x};
        \draw[black] (1.75,-0.75) node {$C_\gamma^2$};
        \addplot[black,densely dashed,domain=0:2.25] {1.33*x+0.1*sin(5*deg(x)};
        \draw[black] (1.75,2.35) node[right] {\small $\X[x,\alpha_\varepsilon,b]{t} - x$};
        \addplot[black,densely dashed,domain=0:2.25] {0.4*x-0.05*sin(4*deg(x))};
        \draw[black] (2.25,1.05) node[right] {\small $\X[y,a,\beta_\varepsilon]{t} - y$};
      \end{axis}
    \end{tikzpicture}
    \caption{Trajectories in $\varepsilon$-neighborhoods ($D^1_\varepsilon$ and $D^2_\varepsilon$)
    of dominions $D^1$ and $D^2$.}
    \label{fig:orbits}
  \end{figure}
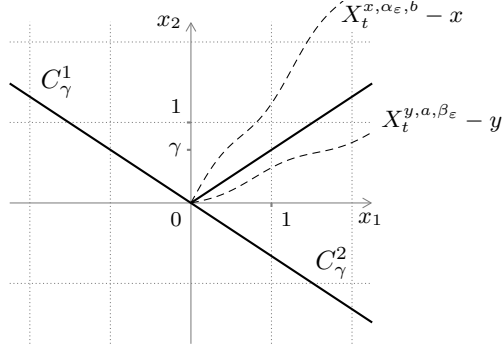

  Then, the parameter $\varepsilon$ being fixed, either there is some time $T$
  such that the images in the torus $\torus{2}$ of the two trajectories mentioned above
  intersect on the time interval $[0,T]$ at some point $z_\varepsilon \in \torus{2}$,
  or for all times $T$ their images always remain disjoint, which is possible only if
  they are contained in the parallel half-lines starting in $x$ and $y$, respectively,
  and directed by the vector $(1, \gamma)^\intercal$.
  Indeed, since $\gamma \notin \Q$, the images of these half-lines in the torus are dense,
  and therefore any deviation of a trajectory from one of these half-lines eventually
  leads to the intersection of the two trajectories.

  If there are only finitely many points $z_\varepsilon$ as described above, then
  we deduce that $D^1$ and $D^2$ respectively contain the latter half-lines and
  therefore both dominions correspond to the trivial dominion in $\torus{2}$,
  composed of the whole state space.
  If there are infinitely many points $z_\varepsilon$, then any limit point
  is, by construction, contained in both $D^1$ and $D^2$.
  In any case, we deduce that the players do not have disjoint dominions in the torus
  and so, according to \Cref{thm:unique-ergodicity-PDE-approach},
  that the game is uniquely ergodic.
\end{example}

\section{Unique ergodicity of games via controllability approach}
\label{sec:controllability-approach}

In this section, as usual, we fix a deterministic zero-sum differential game
in its lower form, $\Gamma^-$, which satisfies \Cref{asm:standing-assumption}.
However, we assume that the controlled system~\labelcref{eq:controlled-system} is not
Lipschitz continuous (and in particular that $L_f > 0$), so that equicontinuity
of $\{ \delta v_\delta \}$ cannot be guaranteed.
We also make the standard assumption that the payoff function $\ell$ is Lipschitz continuous
in $x$ uniformly in $(a,b)$, i.e., that there exists $L_\ell > 0$ such that
\[
  \abs{\ell(x,a,b)-\ell(y,a,b)} \leq L_\ell \Norm{x-y} ,
  \quad \forall x, y \in \R^n , \enspace \forall a \in A ,
  \enspace \forall b \in B .
\]
We then have the following classical regularity property of the value function.

\begin{proposition}[see {\cite[Ch.~VIII, Prop.~1.8]{BCD97}}]
  \label{prop:regularity-values}
  If $\ell$ is Lipschitz continuous in $x$ uniformly in $(a,b)$, then, for any discount
  factor $\delta < L_f$, the value function $\delta v_\delta$ is H\"older continuous
  with exponent $\delta / L_f$ and constant $L$ independent of $\delta$:
  \begin{equation*}
    \label{eq:Holder-continuity}
    \Norm{\delta v_\delta(x) - \delta v_\delta(y)} \leq L \Norm{x-y}^{\delta / L_f} , \quad
    \forall x , y \in \R^n .
  \end{equation*}
\end{proposition}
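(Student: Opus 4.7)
The plan is to compare $v_\delta(x)$ and $v_\delta(y)$ by evaluating both along a common near-optimal pair of controls, to use Grönwall's inequality to control the spreading of the two resulting trajectories, and then to apply a time-splitting argument that turns exponential spreading into a Hölder bound.

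First I would fix $\varepsilon > 0$ and choose a strategy $\alpha^* \in \NASmin$ that is $\varepsilon$-optimal for player~1 at $x$, i.e.\ $\sup_{b \in \Bcal} J_\delta(x,\alpha^*,b) \leq v_\delta(x) + \varepsilon$, together with a control $b^* \in \Bcal$ such that $\sup_{b \in \Bcal} J_\delta(y,\alpha^*,b) \leq J_\delta(y,\alpha^*,b^*) + \varepsilon$. This yields
\[
  v_\delta(y) - v_\delta(x) \leq J_\delta(y,\alpha^*,b^*) - J_\delta(x,\alpha^*,b^*) + 2\varepsilon ,
\]
and using the Lipschitz continuity of $\ell$ in $x$ together with the Grönwall estimate $\Norm{\X[x,\alpha^*,b^*]{s} - \X[y,\alpha^*,b^*]{s}} \leq e^{L_f s}\Norm{x-y}$ applied to~\labelcref{eq:controlled-system}, the right-hand side is at most $L_\ell \Norm{x-y}\int_0^\infty e^{(L_f - \delta)s} ds + 2\varepsilon$.

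Next, assuming without loss of generality $\Norm{x-y} \leq 1$, I would split the integral at a horizon $T > 0$: use the Lipschitz--Grönwall bound on $[0,T]$ and the uniform bound $\abs{\ell} \leq M_\ell$ on $[T,\infty)$, obtaining
\[
  v_\delta(y) - v_\delta(x) \leq \frac{L_\ell \Norm{x-y}}{L_f - \delta} e^{(L_f - \delta)T} + \frac{2 M_\ell}{\delta} e^{-\delta T} + 2\varepsilon .
\]
Choosing $T = L_f^{-1} \log(1/\Norm{x-y})$ balances the two terms and makes each comparable to a constant times $\Norm{x-y}^{\delta/L_f}$. Sending $\varepsilon \to 0$, exchanging the roles of $x$ and $y$ to obtain the converse inequality, multiplying by $\delta$, and treating $\Norm{x-y} > 1$ trivially via $\supnorm{\delta v_\delta} \leq M_\ell$, I get the claimed Hölder estimate with a constant $L$ depending only on $L_\ell$, $M_\ell$, $L_f$ and an upper bound for $\delta$ strictly below $L_f$.

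The main obstacle is quantitative rather than conceptual: the natural integrand bound $L_\ell e^{(L_f - \delta)s}\Norm{x-y}$ fails to be integrable on $[0,\infty)$ as soon as $\delta \leq L_f$, the exponential spreading of trajectories overwhelming the discount. Splitting the time integral precisely trades this non-integrable growth for two integrable pieces, and the crossover time chosen to balance them is exactly what produces the Hölder exponent $\delta/L_f$.
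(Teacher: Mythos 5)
Your argument is correct and is precisely the standard proof of the result the paper cites; the paper itself offers no proof of \cref{prop:regularity-values} beyond the reference to \cite[Ch.~VIII, Prop.~1.8]{BCD97}, so there is nothing for your route to diverge from. The only point worth flagging is the uniformity of the constant: after multiplying by $\delta$ your estimate reads
$\delta \abs{v_\delta(x) - v_\delta(y)} \leq \bigl( \tfrac{\delta L_\ell}{L_f - \delta} + 2 M_\ell \bigr) \Norm{x-y}^{\delta / L_f}$,
whose prefactor blows up as $\delta \to L_f^-$, so $L$ is independent of $\delta$ only on ranges $(0,\delta_0]$ with $\delta_0 < L_f$ (e.g.\ $L = L_\ell + 2M_\ell$ for $\delta \leq L_f/2$) rather than on all of $(0,L_f)$ as the statement literally asserts --- a harmless discrepancy here, since the paper only invokes the proposition in the regime $\delta \to 0$.
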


In view of unique ergodicity, the requirement that the payoff function $\ell$ be uniformly
Lipschitz continuous in $x$ prevents us from considering perturbations $g$
that only lie in $\Cper{\R^n}$.
If we want to use the latter proposition (which we need to prove the main theorem of
this section), we need to restrict the perturbations $g$ to the set of Lipschitz continuous
and $\Z^n$-periodic functions.
Fortunately, this is not a major restriction.
Indeed, in the proof of \Cref{prop:unique-ergodicity-only-if} it is possible to
consider a perturbation function $g$ satisfying \labelcref{eq:perturbation} and which is Lipschitz. 
Thus we have the following stronger result.

\begin{proposition}
  \label{prop:ergodicity-only-if-Lipschitz}
  Assume that for every Lipschitz continuous and $\Z^n$-periodic function $g$ from $\R^n$
  to $\R$, the perturbed differential game with payoff function
  $(x,a,b) \mapsto \ell(x,a,b) + g(x)$ is ergodic.
  Then, in the game $\Gamma^-$, the players do not have disjoint dominions in $\torus{n}$.
  \qed
\end{proposition}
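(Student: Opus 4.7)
The plan is to revisit the proof of \Cref{prop:unique-ergodicity-only-if}: the only point needing attention is that the perturbation $g$ constructed there can in fact be chosen Lipschitz continuous, and then every other step carries over verbatim.

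I would begin by arguing the contrapositive: suppose there exist a dominion $D^1$ of player~1 and a dominion $D^2$ of player~2 in $\R^n$ whose images in the torus are disjoint. Exactly as in the proof of \Cref{prop:unique-ergodicity-only-if}, after replacing each $D^i$ by $\pi^{-1}(\overline{\pi(D^i)})$ I may assume both sets are $\Z^n$-translation-invariant and that $D^1 \cap D^2 = \emptyset$, and then fix $\varepsilon > 0$ small enough that the neighborhoods $D^1_\varepsilon$ and $D^2_\varepsilon$ are still disjoint.

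The key new observation is quantitative: since $D^1_\varepsilon$ and $D^2_\varepsilon$ are $\Z^n$-translation-invariant, their projections in the compact torus $\torus{n}$ are disjoint closed sets, hence separated by some $\eta > 0$, and by periodicity this same lower bound lifts back to $\R^n$. This allows me to define explicitly the Lipschitz perturbation
\[
  g(x) := 3 M_\ell \cdot \frac{\dist{D^1_\varepsilon}{x}}{\dist{D^1_\varepsilon}{x} + \dist{D^2_\varepsilon}{x}} ,
\]
with $M_\ell$ as in the original proof. Each distance function is $1$-Lipschitz and $\Z^n$-periodic (by translation-invariance of the $D^i_\varepsilon$), and the denominator is uniformly bounded below by $\eta > 0$, so $g$ is Lipschitz continuous and $\Z^n$-periodic; moreover it manifestly satisfies the three conditions of \labelcref{eq:perturbation}.

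With this Lipschitz $g$ in hand, I would then replay the argument of \Cref{prop:unique-ergodicity-only-if} word for word: the two-sided bound on $g$ by indicator functions, inserted into the perturbed discounted payoff, together with \Cref{lem:dominion-characterization-i,lem:dominion-characterization-ii}, gives $\delta v_\delta^g(x) \leq M_\ell$ for every $x \in D^1$ and $\delta v_\delta^g(y) \geq 2 M_\ell$ for every $y \in D^2$, so $\liminf_{\delta \to 0}(\delta v_\delta^g(y) - \delta v_\delta^g(x)) \geq M_\ell > 0$ and the perturbed game fails to be ergodic, contradicting the Lipschitz ergodicity hypothesis. The only step beyond the original proof is the manufacture of a Lipschitz separating function between $D^1_\varepsilon$ and $D^2_\varepsilon$, and the standard distance-ratio interpolation above settles it; it is the positive separation $\eta$ coming from compactness of the torus that makes the denominator safely bounded away from zero.
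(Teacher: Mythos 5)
Your proposal is correct and follows exactly the route the paper intends: the paper's proof of \Cref{prop:ergodicity-only-if-Lipschitz} consists precisely of the remark that the perturbation $g$ in the proof of \Cref{prop:unique-ergodicity-only-if} can be taken Lipschitz, and you simply make this explicit with the standard distance-ratio interpolation (whose Lipschitz constant is controlled by the positive separation of $D^1_\varepsilon$ and $D^2_\varepsilon$ in the compact torus). Nothing further is needed.
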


To compensate the lack of equicontinuity of $\{ \delta v_\delta \}$ we also need to
introduce the following controllability assumption, which involves sets of points that are
reachable by one player.
Let us first describe precisely these sets.

Given any strategy $\alpha \in \NASmin$ of player~1, we define the {\em reachable set}
from a point $x \in \R^n$ for player~2 by
\[
  R_\alpha^1(x) := \big\{ \X[x,\alpha,b]{t} \mid b \in \Bcal, \enspace t \geq 0 \big\} .
\]
On the other hand, for all strategies $\alpha \in \NASmin$ of player~1, let us associate
a control $b_\alpha \in \Bcal$ of player~2.
Then, we define the {\em reachable set} from $x \in \R^n$ for player~1 by
\[
  R_{b \argt}^2(x) := \big\{ \X[x,\alpha,b_\alpha]{t} \mid \alpha \in \NASmin,
\enspace t \geq 0 \big\}.
\]
Furthermore, we say that the map $\alpha \mapsto b_\alpha$ is nonanticipating if
$\alpha^1[b]_s = \alpha^2[b]_s$ for all $b \in \Bcal$ and almost all $s \in [0,t]$
implies that $(b_{\alpha^1})_s = (b_{\alpha^2})_s$ for almost all $s \in [0,t]$.
That is, if $\alpha^1$ and $\alpha^2$ coincide almost surely on $[0,t]$, then
the same is true for $b_{\alpha^1}$ and $b_{\alpha^2}$.

We emphasize that the purpose of the following assumption is only to provide a uniform bound
on the time needed to get arbitrarily close to any reachable point.
We further mention that the estimate is borrowed from \cite{Ari98} (see also \cite{Bet05}).

\begin{assumption}[Uniform time estimate]
  \label{asm:controllability}
  There exist constants $\gamma \in [0,1)$ and $C > 0$ such that, for all $\varepsilon > 0$,
  \begin{itemize}
    \item for all $\alpha \in \NASmin$, for all $x \in \R^n$ and all $y \in \clo{R^1_\alpha}(x)$,
      there is a control $b \in \Bcal$ and a time $t \leq C(-\log \varepsilon)^\gamma$ for which
      $\norm{y - \X[x,\alpha,b]{t}} \leq \varepsilon$;
    \item for all nonanticipating map $\NASmin \to \Bcal, \alpha \mapsto b_\alpha$,
      for all $x \in \R^n$ and all $y \in \clo{R^2_{b \argt}}(x)$,
      there is a strategy $\alpha \in \NASmin$ and a time $t \leq C(-\log \varepsilon)^\gamma$
      for which $\norm{y - \X[x,\alpha,b_\alpha]{t}} \leq \varepsilon$.
  \end{itemize}
\end{assumption}

We can now give the condition for the (somewhat modified version of) unique ergodicity of
differential games.
Notice that in the proof of this result, we use the fact that the sets
$\clo{R^1_\alpha}(x)$ and $\clo{R^2_{b \argt}}(x)$ are dominions of player~1 and
player~2, respectively.
We postpone the precise statement and the proof of this fact afterward.

\begin{theorem}
  \label{thm:unique-ergodicity-control-approach}
  In the differential game $\Gamma^-$, suppose that \Cref{asm:controllability} holds
  and that the payoff function $\ell$ is Lipschitz continuous in $x$ uniformly in $(a,b)$.
  The following assertions are equivalent:
  \begin{enumerate}
    \item for every function $\ell': \R^n \times A \times B \to \R$ which is
      Lipschitz continuous in $x$ uniformly in $(a,b)$ and $\Z^n$-periodic in $x$,
      the modified game with running payoff $\ell'$ is ergodic;
      \label{itm:uniq-ergod-i}
    \item for every Lipschitz continuous and $\Z^n$-periodic function $g: \R^n \to \R$,
      the perturbed game with running payoff $(x,a,b) \mapsto \ell(x,a,b) + g(x)$ is ergodic;
      \label{itm:uniq-ergod-ii}
    \item the players do not have disjoint dominions in the torus.
      \label{itm:uniq-ergod-iii}
  \end{enumerate}
\end{theorem}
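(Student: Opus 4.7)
The implication $(\ref{itm:uniq-ergod-i}) \Rightarrow (\ref{itm:uniq-ergod-ii})$ is immediate: when both $\ell$ and $g$ are Lipschitz continuous in $x$ uniformly in $(a,b)$, so is $\ell + g$, and $(\ref{itm:uniq-ergod-i})$ applied with $\ell' = \ell + g$ delivers $(\ref{itm:uniq-ergod-ii})$. The implication $(\ref{itm:uniq-ergod-ii}) \Rightarrow (\ref{itm:uniq-ergod-iii})$ is exactly \Cref{prop:ergodicity-only-if-Lipschitz}.

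The real work is $(\ref{itm:uniq-ergod-iii}) \Rightarrow (\ref{itm:uniq-ergod-i})$. I fix a Lipschitz $\Z^n$-periodic $\ell'$, denote by $v_\delta$ the associated value, and gather three preliminary ingredients. First, the H\"older estimate $\Norm{\delta v_\delta(x) - \delta v_\delta(y)} \leq L \Norm{x-y}^{\delta/L_f}$ from \Cref{prop:regularity-values}, with $L$ independent of $\delta$. Second, by \Cref{thm:unique-ergodicity-PDE-approach} applied with hypothesis $(\ref{itm:uniq-ergod-iii})$, the strong maximum principle for the recession Hamiltonian $H_\infty$ holds, so that continuous solutions of~\labelcref{eq:HJ-PDE-homogeneous} are constants. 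Third, for every $\alpha \in \NASmin$ and $x_0 \in \R^n$, $\clo{R^1_\alpha}(x_0)$ is a dominion of player~1, and for every nonanticipating $\alpha \mapsto b_\alpha$, $\clo{R^2_{b\argt}}(x_0)$ is a dominion of player~2; this is proved separately (as announced in the paper) by concatenating the underlying strategy with a time shift and invoking continuous dependence of trajectories on the initial datum.

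The heart of the argument is the following key estimate: for any $x_0 \in \R^n$, $\alpha \in \NASmin$, and $z \in \clo{R^1_\alpha}(x_0)$, one has $\Norm{\delta v_\delta(x_0) - \delta v_\delta(z)} \to 0$ as $\delta \to 0$, and symmetrically for a nonanticipating $\alpha \mapsto b_\alpha$ and $z \in \clo{R^2_{b\argt}}(x_0)$. I derive it by combining the dynamic programming principle with \Cref{asm:controllability}: the first bullet yields $b \in \Bcal$ and a time $t \leq C(-\log \varepsilon)^\gamma$ with $\X[x_0,\alpha,b]{t}$ within $\varepsilon$ of $z$, so a DPP inequality controls $\delta v_\delta(x_0) - \delta v_\delta(\X[x_0,\alpha,b]{t})$ by a term of order $\delta t \supnorm{\ell'}$, while the H\"older estimate controls $\delta v_\delta(\X[x_0,\alpha,b]{t}) - \delta v_\delta(z)$ by $L \varepsilon^{\delta/L_f}$. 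Because $\gamma < 1$, picking $\varepsilon(\delta)$ with $-\log \varepsilon(\delta) = \delta^{-\rho}$ for some $\gamma < \rho < 1$ sends both error terms to zero. The adversarial action of the opponent is handled by invoking both bullets of \Cref{asm:controllability}, which furnish matching one-sided reachability bounds for each player so that the argument runs on both sides of the min-max.

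For the conclusion, given arbitrary $x_0, y_0 \in \R^n$ I choose some $\alpha$ and some nonanticipating $\alpha \mapsto b_\alpha$ so that the previous paragraph supplies dominions $D^1 \ni x_0$ of player~1 and $D^2 \ni y_0$ of player~2. Hypothesis $(\ref{itm:uniq-ergod-iii})$ guarantees $\clo{\pi(D^1)} \cap \clo{\pi(D^2)} \neq \emptyset$, hence the existence of $z \in D^1$ and $z' \in D^2$ with $\pi(z) = \pi(z')$, and $v_\delta(z) = v_\delta(z')$ by $\Z^n$-periodicity. Chaining two instances of the key estimate yields $\Norm{\delta v_\delta(x_0) - \delta v_\delta(y_0)} \to 0$ uniformly in $(x_0, y_0)$, giving uniform convergence of $\delta v_\delta$ to a constant and thus ergodicity of the game with payoff $\ell'$. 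The principal obstacle is the key estimate itself: the controllability bound is intrinsically one-sided (each player reaches points only against a fixed response of the opponent), yet it must be combined with the adversarial min-max structure of $v_\delta$ inside the nonanticipating-strategy framework, which is precisely why both bullets of \Cref{asm:controllability}, together with the strict inequality $\gamma < 1$, are required.
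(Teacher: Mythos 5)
Your decomposition and overall route for (iii)~$\Rightarrow$~(i) are the same as the paper's (dynamic programming principle, \Cref{lem:reachable-sets}, a common point of the two dominions in the torus, the H\"older bound of \Cref{prop:regularity-values}, the time estimate of \Cref{asm:controllability}, then a choice of $\varepsilon=\varepsilon(\delta)$), but two steps do not hold as written. First, your ``key estimate'' $\Norm{\delta v_\delta(x_0)-\delta v_\delta(z)}\to 0$ for \emph{any} $\alpha\in\NASmin$ and any $z\in\clo{R^1_\alpha}(x_0)$ is false as stated: for a generic $\alpha$ the reachable set can contain points whose value differs from $v_\delta(x_0)$ by a fixed amount, and even when $\alpha$ is the $\varepsilon$-optimal strategy furnished by the DPP one only obtains the one-sided inequality $\delta v_\delta(x_0)\geq \delta v_\delta(z)-o(1)$; nothing forces the reverse. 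The argument must chain this lower bound with the dual upper bound $\delta v_\delta(y_0)\leq \delta v_\delta(z')+o(1)$, valid for $z'$ in the closure of the reachable set of the $\varepsilon$-optimal nonanticipating map $\alpha\mapsto b_\alpha$ at $y_0$, through a common point of $\clo{\pi(D^1)}\cap\clo{\pi(D^2)}$, and then symmetrize in $(x_0,y_0)$. Your final paragraph is compatible with this reading, but the estimate you actually invoke is not the one you can prove.

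Second, and this is the step that genuinely fails, your parameter window is wrong. With $-\log\varepsilon(\delta)=\delta^{-\rho}$, the discounting error $\delta T_\varepsilon\approx C\delta^{1-\rho\gamma}$ vanishes iff $\rho<1/\gamma$, while the H\"older error $L\,\varepsilon^{\delta/L_f}=L\exp\bigl(-\delta^{1-\rho}/L_f\bigr)$ vanishes iff $\rho>1$. With your choice $\gamma<\rho<1$ the H\"older term converges to $L$, not to $0$, so the bound on $\delta v_\delta(x_0)-\delta v_\delta(y_0)$ does not tend to zero and the proof does not close. The correct window is $1<\rho<1/\gamma$, which is nonempty precisely because $\gamma<1$; this is where that hypothesis is actually used (the paper takes $\rho=1+\omega$ with $0<\omega<\frac{1}{\gamma}-1$). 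Finally, passing from the uniform vanishing of the oscillation of $\delta v_\delta$ to ergodicity still requires a short argument (e.g., the resulting equicontinuity combined with the strong maximum principle via \Cref{prop:ergodicity-Hamiltonians}); you list the strong maximum principle among your preliminary ingredients but never use it in the conclusion.
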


\begin{proof}
  The implication \labelcref{itm:uniq-ergod-i}~$\Rightarrow$~\labelcref{itm:uniq-ergod-ii}
  is trivial and we already know from \Cref{prop:ergodicity-only-if-Lipschitz} that
  \labelcref{itm:uniq-ergod-ii}~$\Rightarrow$~\labelcref{itm:uniq-ergod-iii}.
  So we only need to prove that
  \labelcref{itm:uniq-ergod-iii}~$\Rightarrow$~\labelcref{itm:uniq-ergod-i}.
  And since the payoff function $\ell$ is arbitrary and assertion \labelcref{itm:uniq-ergod-iii}
  does not depend on it, if we prove that $\Gamma^-$ is ergodic, the result will be true
  for any other payoff function $\ell'$.

  Let $\delta > 0$ be any discount factor and let $\varepsilon$ be a fixed positive real.
  Let $x,y$ be any points in $\R^n$.
  From the dynamic programming principle, there exists a strategy $\alpha^{\! 1} \in \NASmin$
  of player~1 (which depends only on $\delta$, $\varepsilon$ and $x$) such that
  \begin{equation}
    \label{eq:DPP-1}
    v_\delta(x) + \varepsilon \geq
    \int_0^t e^{-\delta s} \ell(\X[x,\alpha^{\! 1},b]{s}, \alpha^{\! 1}[b]_s, b_s) ds
      + e^{- \delta t} v_\delta(\X[x,\alpha^{\! 1}, b]{t})
  \end{equation}
  for all times $t > 0$ and all controls $b \in \Bcal$.
  Similarly, for all $\alpha \in \NASmin$, there exists a control $b_\alpha \in \Bcal$
  of player~2 (which depends only on $\delta$, $\varepsilon$, $y$ and $\alpha$) such that
  \begin{equation}
    \label{eq:DPP-2}
    v_\delta(y) - \varepsilon \leq
    \int_0^t e^{-\delta s} \ell(\X[y,\alpha,b_\alpha]{s}, \alpha[b_\alpha]_s, (b_\alpha)_s) ds
      + e^{- \delta t} v_\delta(\X[y,\alpha, b_\alpha]{t})
  \end{equation}
  for all times $t > 0$.
  Furthermore, the map $\alpha \mapsto b_\alpha$ can be chosen nonanticipating,
  as defined above (indeed, for the controls $b_{\alpha}$ to satisfy
  these conditions, we can chose them so that
  $v_\delta(x) - \varepsilon \leq J_\delta(x,\alpha,b_\alpha$).

  Let $D^1 = \clo{R_{\alpha^{\! 1}}^1}(x)$ and $D^2 = \clo{R_{b \argt}^2}(y)$ be the closures
  of the sets of reachable points from $x$ and $y$ by player~2 and player~1, respectively,
  being fixed the strategy $\alpha^{\! 1}$ and the nonanticipating map
  $\alpha \mapsto b_\alpha$.

  We know from subsequent \Cref{lem:reachable-sets} that these sets are respectively
  a dominion of player~1 and a dominion of player~2.
  Hence there exists a point
  $z \in \pi^{-1} \bigp{ \clo{\pi(D^1)} \cap \clo{\pi(D^2)} }$.
  This implies that there are $z^1 \in D^1$, $z^2 \in D^2$ and $k, l \in \Z^n$ such that
  \[
    \Norm{z + k - z^1} \leq \varepsilon / 2 \qquad \text{and} \qquad
    \Norm{z + l - z^2} \leq \varepsilon / 2 .
  \]
  Moreover, \Cref{asm:controllability} guarantees the existence of a control
  $b^{1} \in \Bcal$, a strategy $\alpha^{\! 2} \in \NASmin$ and
  times $t_{1}, t_{2} \leq C (-\log \varepsilon)^\gamma$ such that
  \[
    \Norm{z^1 - \X[x,\alpha^{\! 1},b^{1}]{t_{1}}} \leq \varepsilon / 2
    \qquad \text{and} \qquad
    \Norm{z^2 - \X[y,\alpha^{\! 2},b_{\alpha^{\! 2}}]{t_{2}}} \leq \varepsilon / 2 .
  \]
  Combining these inequalities, we get
  \[
    \Norm{z+k - \X[x,\alpha^{\! 1},b^{1}]{t_{1}}} \leq \varepsilon
    \qquad \text{and} \qquad
    \Norm{z+l - \X[y,\alpha^{\! 2},b_{\alpha^{\! 2}}]{t_{2}}} \leq \varepsilon .
  \]

  Since the inequalities \labelcref{eq:DPP-1,eq:DPP-2} hold uniformly
  in $t$, we can now write them at times $t_{1}$ and $t_{2}$ respectively, and then use
  the estimates that we have just established.
  We recall that, for $\delta$ small enough, the function $\delta v_\delta$ is H\"older
  continuous with exponent $\delta / L_f$ and  constant $L$.
  We also recall that $v_\delta$ is $\Z^n$-periodic.
  Let $T_\varepsilon = C (-\log \varepsilon)^\gamma$ for simplicity.
  From \labelcref{eq:DPP-1} we get
  \begin{align*}
    & \delta v_\delta(x) - \delta v_\delta(z) + \delta \varepsilon \\
    & \qquad \geq -M_{\ell} (1 - e^{-\delta t_{1}})
      + e^{-\delta t_{1}}
      \big( \delta v_\delta(\X[x,\alpha^{\! 1},b^{1}]{t_{1}}) - \delta v_\delta(z+k) \big)
      - (1 - e^{-\delta t_{1}}) \delta v_\delta(z) \\
    & \qquad \geq -(1- e^{-\delta t_{1}}) (\delta v_\delta(z) + M_{\ell})
      - e^{-\delta t_{1}} L \varepsilon^{\delta / L_f} \\
    & \qquad \geq -(1- e^{-\delta T_\varepsilon}) (\delta v_\delta(z) + M_{\ell})
      - L \varepsilon^{\delta / L_f} ,
  \end{align*}
  where we use the fact that $\delta v_\delta(z) + M_{\ell} \geq 0$ and $e^{-\delta t_{1}} \leq 1$.

  On the other hand, from \labelcref{eq:DPP-2} we get
  \begin{align*}
    & \delta v_\delta(y) - \delta v_\delta(z) - \delta \varepsilon \\
    & \qquad \leq M_{\ell} (1 - e^{-\delta t_{2}})
      + e^{-\delta t_{2}}
      \big( \delta v_\delta(\X[y,\alpha^{\! 2},b_{\alpha^{\! 2}}]{t_{2}})
      - \delta v_\delta(z+l) \big)
      - (1 - e^{-\delta t_{2}}) \delta v_\delta(z) \\
    & \qquad \leq (1- e^{-\delta t_{2}}) (-\delta v_\delta(z) + M_{\ell}) +
      e^{-\delta t_{2}} L \varepsilon^{\delta / L_f} \\
    & \qquad \leq (1- e^{-\delta T_\varepsilon}) (-\delta v_\delta(z) + M_{\ell}) +
      L \varepsilon^{\delta / L_f} .
  \end{align*}
  Here we use the fact that $-\delta v_\delta(z) + M_{\ell} \geq 0$ and $e^{-\delta t_{2}} \leq 1$.

  Combining the two inequalities and letting $M = \max \{ M_\ell, L \}$,
  we obtain, for all $\delta, \varepsilon > 0$,
  \begin{equation}
    \label{eq:uniform-bound}
    \delta v_\delta(x) - \delta v_\delta(y)
    \geq -2 \delta \varepsilon - 2 M \big( 1- e^{-\delta T_\varepsilon}
    + e^{(\delta \log \varepsilon) / L_f} \big) .
  \end{equation}
  Since $T_\varepsilon = C (-\log \varepsilon)^\gamma$, choosing $\varepsilon$ such that
  $\log \varepsilon = -\delta^{-(1+\omega)}$ with $0 < \omega < \frac{1}{\gamma} - 1$,
  we observe that the right-hand side of the latter inequality converges to zero
  as $\delta$ vanishes, which yields
  \[
    \liminf_{\delta \to 0} \big( \delta v_\delta(x) - \delta v_\delta(y) \big) \geq 0 .
  \]
  Since the points $x$ and $y$ are arbitrary and the bound in \labelcref{eq:uniform-bound}
  does not depend on them, we deduce that
  \[
    \lim_{\delta \to 0} \big( \delta v_\delta(x) - \delta v_\delta(y) \big) = 0
  \]
  uniformly in $x,y \in \R^n$.

  The rest of the proof is classical (see for instance \cite{Ari98}), but one may also
  notice that the latter uniform limit together with the continuity of
  $(\delta,x) \mapsto \delta v_\delta(x)$ on $(0,1] \times \R^n$ (see the proof of
  \Cref{thm:unique-ergodicity-Hamiltonians}) entails the equicontinuity of
  the family $\{ \delta v_\delta \}_{0 < \delta \leq 1}$.
  We can then conclude with
  \Cref{prop:ergodicity-Hamiltonians,thm:unique-ergodicity-PDE-approach}.
\end{proof}

In order to complete the proof and conclude the section, we prove the following.

\begin{lemma}
  \label{lem:reachable-sets}
  Given a strategy $\alpha \in \NASmin$ of player~1, the topological closure of
  the reachable set from any point $x \in \R^n$ for player~2, $\clo{R_\alpha^1}(x)$,
  is a dominion of player~1.

  Dually, given a map $\alpha \mapsto b_\alpha$ from $\NASmin$ to $\Bcal$ which is
  nonanticipating, the closure of the reachable set from $x$ for player~1,
  $\clo{R_{b \argt}^2}(x)$, is a dominion of player~2.
\end{lemma}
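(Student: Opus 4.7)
The plan is to construct, for each $y$ in the relevant reachable set, a strategy (resp.\ a control) that traps the trajectory near that set, by concatenating the witness of reachability of $y$ with the strategy (resp.\ control) that one wants to execute afterwards. Nonemptiness and closedness of $\clo{R^1_\alpha}(x)$ and $\clo{R^2_{b \argt}}(x)$ are automatic since $x$ itself lies in each set at time~$0$.

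First I would treat exactly reachable points. For the player~1 claim, write $y = \X[x,\alpha,b^\star]{s^\star}$ with $b^\star \in \Bcal$ and $s^\star \geq 0$. Given any $b \in \Bcal$, form the concatenation $\tilde{b} \in \Bcal$ that agrees with $b^\star$ on $[0,s^\star]$ and with the time-shifted control $r \mapsto b_r$ afterwards, i.e., $\tilde{b}_{s^\star + r} = b_r$ for $r \geq 0$. Set $\tilde{\alpha}[b]_s := \alpha[\tilde{b}]_{s+s^\star}$; the nonanticipating property of $\tilde{\alpha}$ transfers directly from that of $\alpha$. Using the nonanticipation of $\alpha$ to see that $\alpha[\tilde{b}]$ coincides with $\alpha[b^\star]$ on $[0,s^\star]$, uniqueness in the Cauchy--Lipschitz theorem yields $\X[y,\tilde{\alpha},b]{s} = \X[x,\alpha,\tilde{b}]{s+s^\star} \in R^1_\alpha(x)$ for every $s \geq 0$ and every $b \in \Bcal$. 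For a general $y \in \clo{R^1_\alpha}(x)$ and prescribed $\varepsilon > 0$, $T \geq 0$, pick $y' \in R^1_\alpha(x)$ with $\Norm{y-y'} \leq \varepsilon\, e^{-L_f T}$ and apply the strategy $\tilde{\alpha}$ built from $y'$. The standard Gronwall estimate on~\labelcref{eq:controlled-system} gives
\[
  \Norm{\X[y,\tilde{\alpha},b]{s} - \X[y',\tilde{\alpha},b]{s}}
  \leq e^{L_f s} \Norm{y-y'} \leq \varepsilon
\]
for all $s \in [0,T]$ and all $b \in \Bcal$, which delivers the distance estimate in the definition of a dominion of player~1.

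The dual statement for $\clo{R^2_{b \argt}}(x)$ proceeds by an analogous concatenation, and I expect this to be the main technical subtlety. Given $y = \X[x,\alpha^\star,b_{\alpha^\star}]{s^\star}$ and an arbitrary $\alpha \in \NASmin$, define
\[
  \tilde{\alpha}[b]_s :=
  \begin{cases}
    \alpha^\star[b]_s, & s \leq s^\star , \\
    \alpha[b_{s^\star + \argt}]_{s-s^\star}, & s > s^\star ,
  \end{cases}
\]
and take $\tilde{b}_\alpha := (b_{\tilde{\alpha}})_{s^\star + \argt}$ as player~2's response. Nonanticipation of $\tilde{\alpha}$ follows from nonanticipation of $\alpha^\star$ (on $[0,s^\star]$) and of $\alpha$ (on the shifted tail). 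The delicate point is that one must use $\alpha^\star[b]$ (depending on $b$) rather than $\alpha^\star[b_{\alpha^\star}]$ on $[0,s^\star]$: only then does the nonanticipating property of the map $\alpha \mapsto b_\alpha$ force $b_{\tilde{\alpha}} = b_{\alpha^\star}$ on $[0,s^\star]$, so that $\X[x,\tilde{\alpha},b_{\tilde{\alpha}}]{\argt}$ coincides with $\X[x,\alpha^\star,b_{\alpha^\star}]{\argt}$ on $[0,s^\star]$ by uniqueness, reaches $y$ at time $s^\star$, and then agrees with $\X[y,\alpha,\tilde{b}_\alpha]{\argt - s^\star}$ for larger times. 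Hence $\X[y,\alpha,\tilde{b}_\alpha]{r} \in R^2_{b \argt}(x)$ for all $r \geq 0$. The extension from exactly reachable points to closure points uses the same Gronwall estimate as above, applied now with $\alpha$ fixed and $\tilde{b}_\alpha$ built from a nearby $y' \in R^2_{b \argt}(x)$.
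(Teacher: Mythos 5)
Your proposal is correct and follows essentially the same route as the paper: the same concatenation of the reachability witness with the subsequent control (resp.\ the shifted strategy $\alpha[\sigma_{s^\star}\argt]$ and response $\sigma_{s^\star} b_{\tilde{\alpha}}$), the same use of nonanticipation of $\alpha \mapsto b_\alpha$ to force $b_{\tilde{\alpha}}$ to agree with $b_{\alpha^\star}$ on $[0,s^\star]$, and the same Gr\"onwall estimate with $\Norm{y-y'} \leq \varepsilon e^{-L_f T}$ to pass to closure points. No gaps.
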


\begin{proof}
  We show in detail that $\clo{R_\alpha^1}(x)$ is a dominion of player~1, and leave
  to the reader the details of the proof for $\clo{R_{b \argt}^2}(x)$, which
  follows the same lines.
  Nevertheless we will highlight the important changes.

  First, for any point $y \in R_\alpha^1(x)$, we show that we can construct a strategy
  $\bar{\alpha} \in \NASmin$ of player~1 such that
  $\X[y,\bar{\alpha},b]{s} \in R_\alpha^1(x)$ for all controls $b \in \Bcal$ of player~2
  and all times $s \geq 0$.
  Indeed, there exist $\bar{b} \in \Bcal$ and $t \geq 0$ such that
  $y = \X[x,\alpha,\bar{b}]{t}$.
  Then, for any control $b \in \Bcal$, let us introduce the control $\bar{b}|b$
  obtained by concatenating $\bar{b}$ and $b$ in the following way:
  \[
    (\bar{b}|b)_s =
    \begin{cases}
      \bar{b}_s , & \text{if} \enspace s \leq t , \\
      b_{s-t}  , & \text{if} \enspace s > t .
    \end{cases}
  \]
  We further define the strategy $\bar{\alpha}$ of player~1 as follows:
  $\bar{\alpha}[b]_s = \alpha[\bar{b}|b]_{s+t}$ for all $s \geq 0$.
  It is straightforward to verify that $\bar{\alpha}$ is nonanticipating and, moreover,
  that for all $s \geq 0$, $\X[y,\bar{\alpha},b]{s} = \X[x,\alpha,\bar{b}|b]{s+t}$.
  Thus, for all $s \geq 0$ we have $\X[y,\bar{\alpha},b]{s} \in R_\alpha^1(x)$.

  Consider now $z \in \clo{R_\alpha^1}(x) \setminus R_\alpha^1(x)$ and fix some
  $\varepsilon > 0$ and $T \geq 0$.
  There exists $y \in R_\alpha^1(x)$ such that $\Norm{y-z} \leq \varepsilon e^{-L_f T}$.
  Let $\bar{\alpha} \in \NASmin$ be the strategy of player~1 defined above,
  which ensures that $\X[y,\bar{\alpha},b]{s} \in R_\alpha^1(x)$
  for all $b \in \Bcal$ and $s \geq 0$.
  We have the following standard estimate on the trajectories of~\labelcref{eq:controlled-system}:
  \begin{equation*}
    \label{eq:trajectories-estimate-ii}
    \Norm{\X[x,a,b]{t} - \X[y,a,b]{t}} \leq e^{L_f t} \Norm{x-y} , \quad
    \forall x, y \in \R^n , \quad \forall a \in \Acal , \quad b \in \Bcal, \quad \forall t \geq 0 ,
  \end{equation*}
  from which we deduce that, for all $s \geq 0$,
  \[
    \Norm{\X[y,\bar{\alpha},b]{s} - \X[z,\bar{\alpha},b]{s}} \leq
    e^{L_f s} \Norm{y-z} \leq \varepsilon e^{L_f (s-T)} \leq \varepsilon .
  \]
  Thus, for all $b \in \Bcal$ and $s \in [0,T]$ we have
  $\dist{R_\alpha^1(x)}{\X[z,\bar{\alpha},b]{s}} \leq \varepsilon$,
  which finally proves that $\clo{R_\alpha^1}(x)$ is a dominion for player~1.

  For $R_{b \argt}^2(x)$ the proof is identical, up to the changes in players' role.
  The main difference concerns the construction, for any point $y \in R_{b \argt}^2(x)$
  and any strategy $\alpha \in \NASmin$ of player~1, of a control $\bar{b} \in \Bcal$ of
  player~2 such that $\X[y,\alpha,\bar{b}]{s} \in R_{b \argt}^2(x)$ for all $s \geq 0$.
  We next detail this construction.
  Let $\bar{\alpha} \in \NASmin$ and $t \geq 0$ be such that
  $y = \X[x,\bar{\alpha},b_{\bar{\alpha}}]{t}$.
  Let us also define, for any $b \in \Bcal$, the control $\sigma_t b$
  by $(\sigma_t b)_s = b_{s+t}$.
  We then define a nonanticipating strategy $\bar{\alpha}|\alpha$ as follows:
  \[
    (\bar{\alpha}|\alpha)[b]_s =
    \begin{cases}
      \bar{\alpha}[b]_s , & \text{if} \enspace s \leq t , \\
      \alpha[\sigma_t b]_{s-t} , & \text{if} \enspace s > t .
    \end{cases}
  \]
  If we set $\bar{b} = \sigma_t b_{\bar{\alpha}|\alpha}$, one can check that
  $\X[y,\alpha,\bar{b}]{s} = \X[x,\bar{\alpha}|\alpha,b_{\bar{\alpha}|\alpha}]{s+t}$
  for all $s \geq 0$ (in particular we have
  $\X[x,\bar{\alpha}|\alpha,b_{\bar{\alpha}|\alpha}]{t} =
  \X[x,\bar{\alpha},b_{\bar{\alpha}}]{t} = y$
  because the map $\alpha \mapsto b_{\alpha}$ is nonanticipating and so
  $(b_{\bar{\alpha}|\alpha})_s = (b_{\bar{\alpha}})_s$ for almost all $s \in [0,t]$).
  Hence the result.
\end{proof}

\section{Operator-theoretic characterization of dominions}
\label{sec:dominion-characterization}

In this final section, we characterize dominions in operator-theoretic terms.
Thus, we show that the notion of dominion coincides with the one of leadership domain and
discriminating domain which appears in viability theory\footnote{We mention that
the notion of discriminating / leadership domain, hence of dominion, relates with the ones of
\textbf{B}-set and approachability in repeated games with vector payoffs.
Indeed, In \cite{ASQS09}, As Soulaimani, Quincampoix and Sorin proved that
the \textbf{B}-sets for one player (which provide a sufficient condition for approachability) 
coincide with the discriminating domains for that player in an associated differential game.}
(see, e.g., \cite{Car96}).
This characterization stems from the similarities that exist between dominions
on the one hand, and the interpretation of discriminating and leadership domains,
on the other hand.
Indeed, the latter, which are originally defined by means of inequalities involving $H_\infty$,
can also be characterized in terms of invariant dynamics (see, e.g., \cite{Car96}).
This correspondence between the two notions can be readily established for leadership
domains and dominions of player~2 in the lower game (see Theorem~2.3, {\em ibid.}).
As for the correspondence between discriminating domains and dominions
of player~1, it is not as straightforward since the interpretation theorem
(Theorem~2.1, {\em ibid.}) requires convexity properties.
Such assumptions -- typically, $A$ must be convex and $f$, affine in $a$ -- are commonly assumed
in viability theory but are not needed here.
Nevertheless, by adapting the proof of the latter result to our setting, we are able to
show that dominions of the first player in $\Gamma^-$ can indeed be characterized
as discriminating domains.
We next state precisely these results.

To this end, we need to introduce the following definition.
A vector $p \in \R^n$ is a {\em proximal normal} to a subset $K$ of $\R^n$
at point $x \in K$ if $\dist{K}{x+p} = \Norm{p}$.
We denote by $\NP{K}{x}$ the set of proximal normals to $K$ at $x$.
Note that, if we let $\Proj{K}{z}$ be the set of projections of any point $z \in \R^n$
onto $K$, i.e.,
\[
  \Proj{K}{z} := \big\{ x \in K \mid \dist{K}{z} = \Norm{x-z} \big\} ,
\]
then the definition of a proximal normal implies that for every vector $p \in \NP{K}{x}$
and every scalar $\nu \in (0,1)$, we have $\Proj{K}{(x + \nu p)} = \{ x \}$.

\medskip
We now provide the operator-theoretic characterizations of dominions.
The first one, for dominions of player~2 in $\Gamma^-$, comes readily from
the correspondence of the latter with leadership domains in viability theory.

\begin{theorem}[{\cite[Thm.~2.3]{Car96}}]
  \label{thm:leadership-domain}
  A nonempty closed set $D$ is a dominion of player~2 in the lower game $\Gamma^-$
  if and only if
  \begin{align*}
    \forall x \in D , \quad \forall p \in \NP{D}{x} , \quad
    & H_\infty(x, -p) \leq 0 , \\
    \text{i.e.,} \quad
    & \min_{b \in B} \max_{a \in A} \scalar{f(x,a,b)}{p} \leq 0 .
  \end{align*}
\end{theorem}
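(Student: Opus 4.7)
The plan is to prove the equivalence by treating the two directions separately, then remarking that this is essentially the content of Cardaliaguet's characterization of leadership domains: dominions of player~2 in $\Gamma^-$ are exactly the closed sets that player~2 can make approximately invariant when player~1 plays with a nonanticipating strategy, which matches the definition of a leadership domain.

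For the necessity direction (dominion $\Rightarrow$ proximal normal inequality), I would argue by contradiction. Suppose there exist $x_0 \in D$, $p_0 \in \NP{D}{x_0}$ with $p_0 \neq 0$, and $\eta > 0$ such that $\min_{b \in B} \max_{a \in A} \scalar{f(x_0,a,b)}{p_0} \geq 4 \eta$. By continuity of $f$ and compactness of $B$, this remains $\geq 3\eta$ on a ball $B(x_0, \rho)$. Then for each $b \in B$ and each $y \in B(x_0,\rho)$ there is some $a \in A$ with $\scalar{f(y,a,b)}{p_0} \geq 3 \eta$, and a measurable selection (Kuratowski--Ryll-Nardzewski, with Filippov's trick to obtain nonanticipation) yields a nonanticipating strategy $\alpha \in \NASmin$ that, along every trajectory staying in $B(x_0,\rho)$, plays an action $a$ satisfying this inequality. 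Hence for every $b \in \Bcal$, $\frac{d}{dt} \scalar{\X[x_0,\alpha,b]{t} - x_0}{p_0} \geq 3 \eta$, so $\scalar{\X[x_0,\alpha,b]{t} - x_0}{p_0} \geq 3 \eta t$ on a short time interval $[0,t_0]$ (independent of $b$). Combining this with the proximal normal inequality $\scalar{y - x_0}{p_0} \leq \tfrac{1}{2} \Norm{y - x_0}^2$ valid for all $y \in D$, and with $\Norm{\X{t} - x_0} \leq M_f t$, yields $\dist{D}{\X[x_0,\alpha,b]{t}} \geq c \, t$ for some $c > 0$ and all small $t$. Choosing $\varepsilon$ and $T$ accordingly contradicts \Cref{def:dominions}.

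For the sufficiency direction (proximal normal inequality $\Rightarrow$ dominion), I would use a viability-theoretic construction with the squared distance $\phi(\cdot) = \dist{D}{\cdot}^2$ as a Lyapunov function. Fix $x \in D$, $\varepsilon > 0$, $T \geq 0$ and a strategy $\alpha \in \NASmin$. Partition $[0,T]$ into intervals of length $h$; recursively, given the state $y_k$ at time $k h$ and a projection $\hat{y}_k \in \Proj{D}{y_k}$, the vector $y_k - \hat{y}_k$ is proximal normal to $D$ at $\hat{y}_k$, so the hypothesis provides (up to an $\eta_h$ error) an action $b_k \in B$ with $\max_{a \in A} \scalar{f(\hat{y}_k, a, b_k)}{y_k - \hat{y}_k} \leq \eta_h$. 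Player~2 plays the constant control $b_k$ on $[kh, (k+1)h]$ and player~1 reacts via $\alpha$; continuity of $f$ in $x$, together with Grönwall, gives $\phi(y_{k+1}) \leq \phi(y_k) + C h^2 + 2 h \eta_h$. Iterating and choosing $\eta_h, h$ appropriately keeps $\phi$ below $\varepsilon^2$ on $[0,T]$. Extracting a limit of the piecewise-constant controls $b^h$ via weak compactness of relaxed controls (the trajectory depends continuously on $b$ in the appropriate topology by the Lipschitz dependence on the state and compactness of $B$) produces a genuine $b \in \Bcal$ with $\dist{D}{\X[x,\alpha,b]{t}} \leq \varepsilon$ throughout $[0,T]$.

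The main obstacles are technical rather than conceptual: in the necessity direction, the delicate point is upgrading the pointwise measurable selection $a^*(y,b)$ into a genuinely nonanticipating strategy $\alpha \in \NASmin$, which requires the standard Elliott--Kalton machinery; in the sufficiency direction, the subtle point is passing from the discrete-time feedback $b^h$ to a limit control $b$ while keeping track of player~1's strategy $\alpha$ (nonanticipation ensures $\alpha[b^h] \to \alpha[b]$ along a subsequence for a relaxed-controls topology). Both steps are standard in viability theory for differential games, so in the interest of brevity I would simply invoke \cite[Thm.~2.3]{Car96} to state the result, verifying only that the definition of a leadership domain for player~2 translates verbatim to \Cref{def:dominions} for player~2 in the lower game.
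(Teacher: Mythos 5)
Your proposal is correct and lands where the paper does: the paper establishes this statement purely by citing \cite[Thm.~2.3]{Car96}, after observing that the definition of a dominion of player~2 in $\Gamma^-$ is verbatim that of a leadership domain, and your direct sketch is essentially the paper's explicit proof of the companion result for player~1 (\Cref{thm:discriminating-domain}) with the roles of the quantifiers exchanged, so that the measurable-selection work migrates from the sufficiency direction to the necessity direction. One remark: the relaxed-controls limit you flag as the delicate point of the sufficiency direction is not actually needed, because a dominion only requires $\varepsilon$-invariance on $[0,T]$ for each fixed $\varepsilon$ and $T$, so the piecewise-constant control built on a sufficiently fine partition is already an admissible element of $\Bcal$ witnessing the property, exactly as in the paper's proof of \Cref{thm:discriminating-domain}.
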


We next give a similar characterization for dominions of player~1,
which relates them with discriminating domains.

\begin{theorem}
  \label{thm:discriminating-domain}
  A nonempty closed set $D$ is a dominion of player~1 in the lower game $\Gamma^-$
  if and only if
  \begin{align*}
    \forall x \in D , \quad \forall p \in \NP{D}{x} , \quad
    & H_\infty(x, p) \geq 0 , \\
    \text{i.e.,} \quad
    & \max_{b \in B} \min_{a \in A} \scalar{f(x,a,b)}{p} \leq 0 .
  \end{align*}
\end{theorem}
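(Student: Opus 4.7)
The plan is to prove the two implications separately; the direct direction is a short contradiction argument while the converse is the main work, adapting Cardaliaguet's proof for discriminating domains without assuming convexity. For the necessity ($D$ is a dominion $\Rightarrow$ the inequality), I argue by contraposition. Suppose some $x \in D$, nonzero $p \in \NP{D}{x}$ and $\bar b \in B$ satisfy $c := \min_{a \in A} \scalar{f(x,a,\bar b)}{p} > 0$. Continuity of $f$ and compactness of $A$ yield $r > 0$ with $\scalar{f(y,a,\bar b)}{p} \geq c/2$ for $y \in B(x,r)$ and all $a \in A$. Let player~2 play the constant $b_t \equiv \bar b$. For every $\alpha \in \NASmin$, while $X_t = \X[x,\alpha,\bar b]{t}$ stays in $B(x,r)$ (ensured for $t \leq r/M_f$), $\tfrac{d}{dt}\scalar{X_t-x}{p} \geq c/2$, so $\scalar{X_t - x}{p} \geq ct/2$ and $\Norm{X_t - x} \leq M_f t$. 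Since $p \in \NP{D}{x}$, the open ball $B(x+p, \Norm{p})$ is disjoint from $D$, so expanding $\Norm{X_t - (x+p)}^2 \leq \Norm{p}^2 - ct + M_f^2 t^2$ and using $\sqrt{1-u} \leq 1 - u/2$ yields
\[
\dist{D}{X_t} \geq \Norm{p} - \Norm{X_t - (x+p)} \geq \frac{ct - M_f^2 t^2}{2 \Norm{p}} .
\]
Fixing a small $t^\star > 0$ gives a positive lower bound independent of $\alpha$, contradicting the dominion property.

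For the sufficiency, fix $x \in D$, $\varepsilon > 0$, $T > 0$ and construct a nonanticipating strategy $\alpha$ by time discretization with step $\eta > 0$ to be chosen small. Setting $t_k = k\eta$, inductively: given $b \in \Bcal$ and the induced trajectory up to $t_k$, choose a projection $y_k \in \Proj{D}{X_{t_k}}$ (nonempty since $D$ is closed in $\R^n$) and set $p_k := X_{t_k} - y_k \in \NP{D}{y_k}$. By the Kuratowski--Ryll-Nardzewski selection theorem applied to the upper-hemicontinuous compact-valued multifunction $b' \mapsto \argmin_{a \in A}\scalar{f(y_k, a, b')}{p_k}$, pick a Borel measurable $a_k : B \to A$; the hypothesis gives $\scalar{f(y_k, a_k(b'), b')}{p_k} \leq 0$ for every $b' \in B$. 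Define $\alpha[b]_t := a_k(b_t)$ for $t \in [t_k, t_{k+1})$. Since $y_k, p_k, a_k$ depend only on $b|_{[0, t_k]}$, $\alpha$ is nonanticipating.

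To estimate, write $d_k := \dist{D}{X_{t_k}} = \Norm{p_k}$. Expanding $d_{k+1}^2 \leq \Norm{X_{t_{k+1}} - y_k}^2$ via $X_{t_{k+1}} - y_k = (X_{t_{k+1}} - X_{t_k}) + p_k$ gives
\[
d_{k+1}^2 \leq d_k^2 + \Norm{X_{t_{k+1}} - X_{t_k}}^2 + 2 \int_{t_k}^{t_{k+1}} \scalar{f(X_s, a_k(b_s), b_s)}{p_k}\, ds .
\]
Splitting the integrand as $(f(X_s,\cdot,\cdot) - f(y_k,\cdot,\cdot)) + f(y_k,\cdot,\cdot)$: the second piece contributes $\scalar{f(y_k,a_k(b_s),b_s)}{p_k} \leq 0$, while the first is bounded by $L_f \Norm{X_s - y_k} \Norm{p_k} \leq L_f (M_f \eta + d_k) d_k$. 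Combined with $\Norm{X_{t_{k+1}} - X_{t_k}}^2 \leq M_f^2 \eta^2$, I obtain a recursion $d_{k+1}^2 \leq d_k^2 (1 + C_1 \eta) + C_2 \eta^2$ for constants $C_1, C_2$ depending only on $L_f, M_f$. Starting from $d_0 = 0$, discrete Grönwall over $\lceil T/\eta \rceil$ steps yields $d_k^2 \leq C(T) \eta$; between grid points $\dist{D}{X_t}$ varies by at most $M_f \eta$, so $\dist{D}{X_t} \leq \sqrt{C(T)\eta} + M_f \eta \leq \varepsilon$ for $\eta$ small enough, uniformly in $b \in \Bcal$ and $t \in [0,T]$.

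The main obstacle is in the sufficiency: unlike the dual leadership-domain case (Theorem~5.1), where player~2's control depends only on the state via a measurable selection in $B$, the $\max_b \min_a$ structure forces player~1 to adapt to $b$, and a single global measurable selection of the best response would classically require convexity of $A$ and affinity of $f$ in $a$ (as in Cardaliaguet's Theorem~2.1). The discretization above bypasses this by making a purely pointwise-in-$b$ selection that is valid only on each short interval, with the Grönwall estimate controlling the accumulated drift of the trajectory away from $D$ as $\eta \to 0$.
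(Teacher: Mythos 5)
Your proposal is correct and follows essentially the same route as the paper: a contradiction-by-drift estimate along a constant control $\bar b$ for necessity, and for sufficiency a time-discretized nonanticipating strategy built from a measurable selection of actions satisfying $\scalar{f(y_k,a,b')}{p_k}\leq 0$, closed by a Gr\"onwall-type estimate. The only (harmless) variations are that you select from the $\argmin$ multifunction via Kuratowski--Ryll-Nardzewski where the paper selects from the sublevel-set multifunction, and you run a discrete Gr\"onwall recursion on $d_k^2$ where the paper integrates $\frac{d}{dt}\Norm{\X{t}-\Y{k}}^2$ and applies the continuous Gr\"onwall inequality on each subinterval.
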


\begin{proof}
  We first prove the necessary part and suppose that $D$ is a dominion of player~1.
  Toward a contradiction, let us assume that there exists a positive constant $\eta$,
  some $x \in D$ and some $p \in \NP{D}{x}$ such that 
  \[
    \max_{b \in B} \min_{a \in A} \scalar{f(x,a,b)}{p} \geq \eta > 0 .
  \]
  Since the function $b \mapsto \min_{a \in A} \scalar{f(x,a,b)}{p}$ is upper semicontinuous
  and $B$ is compact, there exists an action $\bar{b} \in B$ such that
  \begin{equation}
    \label{eq:discr-i}
    \forall a \in A , \quad \scalar{f(x,a,\bar{b})}{p} \geq \eta .
  \end{equation}
  Let $b \in \Bcal$ be the constant control equal to $\bar{b}$, i.e.,
  $b_t = \bar{b}$ for all $t \geq 0$.

  Since $D$ is a dominion of player~1, given $\varepsilon > 0$ and $T > 0$ there exists
  a strategy $\alpha \in \NASmin$ such that $\dist{D}{\X[x,\alpha,b]{t}} \leq \varepsilon$
  for all $t \in [0,T]$.
  In order to simplify the notation, let $\X{t} = \X[x,\alpha,b]{t}$.
  Then, for all $t \in [0,T]$, choosing any point $\Y{t}$ in $\Proj{D}{\X{t}}$,
  the set of projections of $\X{t}$ on $D$, we have
  \begin{equation}
    \label{eq:discr-ii}
    \Norm{x + p - \X{t}} \geq \Norm{x + p - \Y{t}} -
    \Norm{\X{t} - \Y{t}} \geq \Norm{p} - \varepsilon ,
  \end{equation}
  where we use the fact that $\Y{t} \in D$ and that
  $\Norm{x + p - \Y{t}} \geq \dist{D}{x+p} = \Norm{p}$ since $p \in \NP{D}{x}$.

  On the other hand, 
  for almost all $t \in [0,T]$ we have
  \begin{align*}
    \frac{1}{2} \frac{d}{dt} \Norm{\X{t} - (x + p)}^2 & = \scalar{\dot{\X{t}}}{\X{t} - (x + p)} \\
    & = \scalar{f(\X{t}, \alpha[b]_t, b_t)}{\X{t} - x} - \scalar{f(\X{t}, \alpha[b]_t, b_t)}{p} \\
    & \leq \Norm{\X{t} - x} \Norm{f(\X{t}, \alpha[b]_t, b_t)} \\
    & \phantom{\leq} + \Norm{p} \Norm{f(\X{t}, \alpha[b]_t, b_t) - f(x, \alpha[b]_t, b_t)}
    - \scalar{f(x, \alpha[b]_t, b_t)}{p} \\
    & \leq M_f (M_f + L_f \Norm{p}) t - \eta .
  \end{align*}
  To establish the last inequality, we used the estimate
  \labelcref{eq:trajectories-estimate-i};
  the Lipschitz continuity of $f$ (with Lipschitz constant $L_f$);
  and \labelcref{eq:discr-i}.
  Let $C = M_f (M_f + L_f \Norm{p})$.
  After integrating the latter inequality we get, for all $t \in [0,T]$,
  \[
    \Norm{\X{t} - (x + p)}^2 - \Norm{p}^2 \leq  C t^2 - 2 \eta t ,
  \]
  which, combined with \labelcref{eq:discr-ii}, yields
  \begin{equation}
    \label{eq:discr-iii}
    \varepsilon^2 - 2 \Norm{p} \varepsilon \leq C t^2 - 2 \eta t .
  \end{equation}
  Note that to square \labelcref{eq:discr-ii}, we need to assume that
  $\varepsilon \leq \Norm{p}$, which is possible because $p$ is different
  from $0$ (otherwise \labelcref{eq:discr-i} would not hold).
  In the latter inequality, the positive constants $\Norm{p}$, $C$ and $\eta$ are fixed,
  whereas $\varepsilon$ and $T$ are arbitrary.
  Hence, by choosing $T = \eta / C$ and rewriting \labelcref{eq:discr-iii}
  with $t = T$ we obtain
  \[
    \varepsilon^2 - 2 \Norm{p} \varepsilon \leq - \frac{\eta^2}{C}
  \]
  which is a contradiction if $\varepsilon$ is small enough.
  This concludes the proof of the necessary part.

  We now prove the sufficient part and assume that for all points $x$ in $D$ and
  all proximal normals $p$ in $\NP{D}{x}$, we have
  \begin{equation}
    \label{eq:discr-iv}
    \max_{b \in B} \min_{a \in A} \scalar{f(x,a,b)}{p} \leq 0 .
  \end{equation}
  We then fix $x \in D$ and positive constants $\varepsilon$ and $T$.
  Our aim is to construct recursively on the subintervals $[t_k,t_{k+1})$ of
  a well-chosen partition $\{ t_k = k \frac{T}{N}\}_{0 \leq k \leq N}$
  of $[0,T]$, a nonanticipating strategy $\alpha$ of player~1 such that
  $\dist{D}{\X[x,\alpha,b]{t}} \leq \varepsilon$ for all $t \in [0,T]$
  and all controls $b$ of player~2.
  The mesh $\theta = \frac{T}{N}$ of the partition (which shall depend only on $x$,
  $\varepsilon$, $T$ and the data of the problem) will be chosen a posteriori,
  so we assume for now that it is fixed.
  Also, for any $z \in \R^n$ we shall fix a point in $\Proj{D}{z}$
  which we denote by $\proj{D}{z}$.

  We start by selecting an arbitrary element $\bar{a}$ in $A$ and set
  $\alpha[b]_t = \bar{a}$ for all $b \in \Bcal$ and $t \in [0,t_1)$.
  Note that $\alpha$ is obviously nonanticipating on $[0,t_1)$, that is,
  for any controls $b^1, b^2 \in \Bcal$ that coincide almost everywhere on $[0,t_1)$,
  we have $\alpha[b^1]_t = \alpha[b^2]_t$ for (almost) all $t \in [0,t_1)$.

  Next we assume that $\alpha$ has been defined on $[0,t_k)$ with $0 < k < N$
  and that it is nonanticipating on this interval.
  Given any control $b \in \Bcal$, if $\X[x,\alpha,b]{t_k} \in D$, then
  we set $\alpha[b]_t = \bar{a}$ on $[t_k,t_{k+1})$.
  Otherwise, letting $\X{k} = \X[x,\alpha,b]{t_k}$ (for simplicity) and
  $\Y{k} = \proj{D}{\X{k}}$, we introduce the set-valued map $\Phi$
  defined from $B$ to $A$ by
  \[
    \forall \varb \in B , \quad
    \Phi(\varb) = \big\{ a \in A \mid \scalar{f(\Y{k},a,\varb)}{\X{k} - \Y{k}} \leq 0 \big\} .
  \]
  Let us observe that $\Phi$ depends on the control $b$ only through $\X{k}$.
  Thus, if two controls $b^1$ and $b^2$ are equal almost everywhere on $[0,t_k)$,
  then $\X[x,\alpha,b^1]{t_k} = \X[x,\alpha,b^2]{t_k}$ and therefore they define
  the same set-valued map.

  Since $f$ is continuous, $\Phi$ is measurable and has closed values.
  Moreover, since $\X{k} - \Y{k} \in \NP{D}{\Y{k}}$ by definition,
  \labelcref{eq:discr-iv} implies that the domain of $\Phi$ is $B$, i.e.,
  $\Phi(\varb)$ is nonempty for all $\varb \in B$.
  Hence, according to the Measurable Selection Theorem (see \cite[Thm.~8.1.3]{AF09}),
  $\Phi$ admits a measurable selection $\phi: B \to A$.
  Then we set $\alpha[b]_t = \phi(b_t)$ for all $t \in [t_k,t_{k+1})$.
  It is readily seen that $\alpha$ is nonanticipating on $[0,t_{k+1})$, whence
  on $[0,T)$ after repeating the induction step until $t_{k+1} = T$.
  For $t = T$, we set $\alpha[b]_T = \bar{a}$ for all $b \in \Bcal$.

  To conclude the proof, it remains to show that
  $\dist{D}{\X[x,\alpha,b]{t}} \leq \varepsilon$ on $[0,T]$
  for every control $b$ of player~2.
  So we fix $b \in \Bcal$ and let $\X{t} = \X[x,\alpha,b]{t}$.
  We also let $\X{k} = \X{t_k}$ and $\Y{k} = \proj{D}{\X{k}}$.
  For all $k \in \{0,\dots,N-1\}$ and for almost all $t \in [t_k, t_{k+1}]$ we have
  \begin{align*}
    \frac{1}{2} \frac{d}{dt} \Norm{\X{t} - \Y{k}}^2
    & = \scalar{f(\X{t}, \alpha[b]_t, b_t)}{\X{t} - \Y{k}} \\
    & = \scalar{f(\X{t}, \alpha[b]_t, b_t) - f(\Y{k}, \alpha[b]_t, b_t)}{\X{t} - \Y{k}} \\
    & \quad + \scalar{f(\Y{k}, \alpha[b]_t, b_t)}{\X{t} - \X{k}}
    + \scalar{f(\Y{k}, \alpha[b]_t, b_t)}{\X{k} - \Y{k}} \\
    & \leq L_f \Norm{\X{t} - \Y{k}}^2 + M_f \Norm{\X{t}-\X{k}}
    + \scalar{f(\Y{k}, \alpha[b]_t, b_t)}{\X{k} - \Y{k}} \\
    & \leq L_f \Norm{\X{t} - \Y{k}}^2 + M_f^2 (t-t_k) .
  \end{align*}
  To establish the latter inequality, we used the estimate
  \labelcref{eq:trajectories-estimate-i} and the fact that
  either $\X{k} \notin D$, in which case
  $\scalar{f(\Y{k}, \alpha[b]_t, b_t)}{\X{k} - \Y{k}} \leq 0$ by definition
  of $\alpha$, or $\X{k} \in D$ which implies $\X{k} - \Y{k} = 0$.
  By integration we then obtain
  \[
    \Norm{\X{t} - \Y{k}}^2 \leq \Norm{\X{k} - \Y{k}}^2 + M_f^2 (t-t_k)^2
    + 2 L_f \int_{t_k}^t \Norm{\X{s} - \Y{k}}^2 ds
  \]
  for all $k \in \{0,\dots,N-1\}$ and $t \in [t_k, t_{k+1}]$.
  Gr\"onwall's inequality yields
  \[
    \Norm{\X{t} - \Y{k}}^2 \leq \big( \Norm{\X{k}-\Y{k}}^2 + M_f^2 (t-t_k)^2 \big)
    e^{2 L_f (t-t_k)}
  \]
  and thus
  \[
    \dist{D}{\X{t}}^2 \leq \big( \dist{D}{\X{k}}^2 + M_f^2 \theta^2 \big) e^{2 L_f \theta} .
  \]

  If we apply the latter inequality to $t =t_{k+1}$, we can use it to show by induction that,
  for all $k \in \{1,\dots,N\}$,
  \[
    \dist{D}{\X{k}}^2 \leq M_f^2 \theta^2 e^{2 L_f \theta}
    \big(1 + \dots + e^{2 L_f (k-1) \theta} \big) .
  \]
  Combining now the last two inequalities, we deduce that
  for all $k \in \{0,\dots,N-1\}$ and all $t \in [t_k, t_{k+1}]$,
  \[
    \dist{D}{\X{t}}^2 \leq M_f^2 \theta^2 e^{2 L_f \theta}
    \big(1 + \dots + e^{2 L_f k \theta} \big)
    = M_f^2 \theta^2 e^{2 L_f \theta} \, \frac{e^{2 L_f (k+1) \theta} - 1}{e^{2 L_f \theta} - 1} .
  \]
  Since $k \theta \leq N \theta = T$ and $(e^x - 1)^{-1} \leq x^{-1}$ if $x > 0$,
  we finally get, for all $t \in [0,T]$,
  \[
    \dist{D}{\X{t}}^2 \leq \frac{M_f^2}{2 L_f} \theta e^{2 L_f \theta}
    \big(e^{2 L_f T} - 1\big) .
  \]
  The proof is complete once we have observed that we can choose the mesh of the partition,
  $\theta = \frac{T}{N}$, depending only on $M_f$, $L_f$, $T$ and $\varepsilon$, so that
  the right-hand side in the latter inequality is lower than $\varepsilon^2$.
\end{proof}

\begin{remark}[Dominions in the upper game and with Isaacs' condition]
  \label{rmk:lower-upper-games}
  Similar characterizations for the dominions in the upper game $\Gamma^+$ can be obtained
  after switching the identity of the players (the fact that one player is minimizing
  and the other maximizing does not come into account here).
  Thus, a nonempty closed set $D$ is a dominion of player~1 (resp., player~2) in $\Gamma^+$
  if and only if
  \begin{align*}
    \forall x \in D , \quad \forall p \in \NP{D}{x} , \quad
    & \min_{a \in A} \max_{b \in B} \scalar{f(x,a,b)}{p} \leq 0 \\
    ( \text{resp.,} \quad
    & \max_{a \in A} \min_{b \in B} \scalar{f(x,a,b)}{p} \leq 0 ) .
  \end{align*}
  As a consequence, the classical min-max inequality yields that a dominion of player~1
  in $\Gamma^+$ is also a dominion in $\Gamma^-$, and symmetrically, a dominion of player~2
  in $\Gamma^-$ is also a dominion in $\Gamma^+$ (see \Cref{ex:no-isaacs} for an illustration
  of this situation).
  These observations are consistent with the fact that player~1 (resp., player~2) has
  more information in the lower game (resp., in the upper game),
  hence has an advantage in this game.
  Furthermore, if Isaacs' condition~\labelcref{eq:Isaacs-condition} applies
  to $H_\infty$, then the set of dominions for each player is the same in the lower and
  the upper game.
\end{remark}

\providecommand{\arxiv}[1]{\href{http://www.arXiv.org/abs/#1}{arXiv:#1}}
\bibliographystyle{amsalpha}
\bibliography{references}

\providecommand{\bysame}{\leavevmode\hbox to3em{\hrulefill}\thinspace}
\providecommand{\MR}{\relax\ifhmode\unskip\space\fi MR }
\providecommand{\MRhref}[2]{%
  \href{http://www.ams.org/mathscinet-getitem?mr=#1}{#2}
}
\providecommand{\href}[2]{#2}
\begin{thebibliography}{ASQS09}

\bibitem[AB03]{AB03}
O.~Alvarez and M.~Bardi, \emph{Singular perturbations of nonlinear degenerate
  parabolic {PDE}s: a general convergence result}, Arch. Ration. Mech. Anal.
  \textbf{170} (2003), no.~1, 17--61.

\bibitem[AB07]{AB07}
\bysame, \emph{Ergodic problems in differential games}, Advances in dynamic
  game theory, Ann. Internat. Soc. Dynam. Games, vol.~9, Birkh\"{a}user Boston,
  Boston, MA, 2007, pp.~131--152.

\bibitem[AB10]{AB10}
\bysame, \emph{Ergodicity, stabilization, and singular perturbations for
  {B}ellman-{I}saacs equations}, Mem. Amer. Math. Soc. \textbf{204} (2010),
  no.~960, vi+77.

\bibitem[AF09]{AF09}
J.-P. Aubin and H.~Frankowska, \emph{Set-valued analysis}, Modern
  Birkh\"{a}user Classics, Birkh\"{a}user Boston, Inc., Boston, MA, 2009,
  Reprint of the 1990 edition.

\bibitem[AGH20]{AGH19}
M.~Akian, S.~Gaubert, and A.~Hochart, \emph{A game theory approach to the
  existence and uniqueness of nonlinear {P}erron-{F}robenius eigenvectors},
  Discrete Contin. Dyn. Syst. \textbf{40} (2020), no.~1, 207--231.

\bibitem[Ari97]{Ari97}
M.~Arisawa, \emph{Ergodic problem for the {H}amilton-{J}acobi-{B}ellman
  equation. {I}. {E}xistence of the ergodic attractor}, Ann. Inst. H.
  Poincar\'{e} Anal. Non Lin\'{e}aire \textbf{14} (1997), no.~4, 415--438.

\bibitem[Ari98]{Ari98}
\bysame, \emph{Ergodic problem for the {H}amilton-{J}acobi-{B}ellman equation.
  {II}}, Ann. Inst. H. Poincar\'{e} Anal. Non Lin\'{e}aire \textbf{15} (1998),
  no.~1, 1--24.

\bibitem[ASQS09]{ASQS09}
S.~As~Soulaimani, M.~Quincampoix, and S.~Sorin, \emph{Repeated games and
  qualitative differential games: approachability and comparison of
  strategies}, SIAM J. Control Optim. \textbf{48} (2009), no.~4, 2461--2479.

\bibitem[BCD97]{BCD97}
M.~Bardi and I.~Capuzzo-Dolcetta, \emph{Optimal control and viscosity solutions
  of {H}amilton-{J}acobi-{B}ellman equations}, Systems \& Control: Foundations
  \& Applications, Birkh\"{a}user Boston, Inc., Boston, MA, 1997.

\bibitem[Bet05]{Bet05}
P.~Bettiol, \emph{On ergodic problem for {H}amilton-{J}acobi-{I}saacs
  equations}, ESAIM Control Optim. Calc. Var. \textbf{11} (2005), no.~4,
  522--541.

\bibitem[BQR15]{BQR15}
R.~Buckdahn, M.~Quincampoix, and J.~Renault, \emph{On representation formulas
  for long run averaging optimal control problem}, J. Differential Equations
  \textbf{259} (2015), no.~11, 5554--5581.

\bibitem[Car96]{Car96}
P.~Cardaliaguet, \emph{A differential game with two players and one target},
  SIAM J. Control Optim. \textbf{34} (1996), no.~4, 1441--1460.

\bibitem[Car10]{Car10}
\bysame, \emph{Ergodicity of {H}amilton-{J}acobi equations with a noncoercive
  nonconvex {H}amiltonian in {$\mathbb{R}^2/\mathbb{Z}^2$}}, Ann. Inst. H.
  Poincar\'{e} Anal. Non Lin\'{e}aire \textbf{27} (2010), no.~3, 837--856.

\bibitem[CQ15]{CQ15}
P.~Cannarsa and M.~Quincampoix, \emph{Vanishing discount limit and nonexpansive
  optimal control and differential games}, SIAM J. Control Optim. \textbf{53}
  (2015), no.~4, 1789--1814.

\bibitem[GQ13]{GQ13}
V.~Gaitsgory and M.~Quincampoix, \emph{On sets of occupational measures
  generated by a deterministic control system on an infinite time horizon},
  Nonlinear Anal. \textbf{88} (2013), 27--41.

\bibitem[Hoc19]{Hoc19}
A.~Hochart, \emph{An accretive operator approach to ergodic zero-sum stochastic
  games}, J. Dyn. Games \textbf{6} (2019), no.~1, 27--51.

\bibitem[Khl18]{Khl17}
D.~Khlopin, \emph{Tauberian theorem for value functions}, Dyn. Games Appl.
  \textbf{8} (2018), no.~2, 401--422.

\bibitem[LPV87]{LPV87}
P.-L. Lions, G.~Papanicolaou, and S.~R.~S. Varadhan, \emph{Homogenization of
  {H}amilton-{J}acobi equations}, unpublished work, 1987.

\bibitem[QR11]{QR11}
M.~Quincampoix and J.~Renault, \emph{On the existence of a limit value in some
  nonexpansive optimal control problems}, SIAM J. Control Optim. \textbf{49}
  (2011), no.~5, 2118--2132.

\bibitem[Vig13]{Vig13}
G.~Vigeral, \emph{A zero-zum stochastic game with compact action sets and no
  asymptotic value}, Dyn. Games Appl. \textbf{3} (2013), no.~2, 172--186.

\bibitem[Zil16]{Zil16}
B.~Ziliotto, \emph{Zero-sum repeated games: counterexamples to the existence of
  the asymptotic value and the conjecture {${\rm maxmin}=\lim v_n$}}, Ann.
  Probab. \textbf{44} (2016), no.~2, 1107--1133.

\bibitem[Zil17]{Zil17}
\bysame, \emph{Stochastic homogenization of nonconvex {H}amilton-{J}acobi
  equations: a counterexample}, Comm. Pure Appl. Math. \textbf{70} (2017),
  no.~9, 1798--1809.

\bibitem[Zil19]{Zil19}
\bysame, \emph{Convergence of the solutions of the discounted
  {H}amilton-{J}acobi equation: a counterexample}, J. Math. Pures Appl. (9)
  \textbf{128} (2019), 330--338.

\end{thebibliography}

\end{document}